\documentclass[11pt,a4paper,reqno]{amsart}
\usepackage{a4wide}
\usepackage{amsfonts,amsmath,amssymb,amsthm,enumerate,color,bbm,tabularx,ifthen,twoopt}
\usepackage{graphicx}
\usepackage[latin1]{inputenc}
\usepackage[left]{lineno}
\usepackage[draft]{fixme}
\def\rme{\mathrm{e}}
\def\rmi{\mathrm{i}}

\newenvironment{enum_W}
  {%
  \setlength{\leftmargini}{4em}\begin{enumerate}}
  {\end{enumerate}}

  {\end{enumerate}}

  {\end{enumerate}}

\def\med{\mathrm{med}}
\def\MAD{\mathrm{MAD}}

\def\rme{\mathrm{e}}


\def\rme{\mathrm{e}}
\def\rmi{\mathrm{i}}
\def\ltwo{\mathrm{L}^2}

\def\cl{\stackrel{\mathcal{L}}{\longrightarrow}}


\def\dwt{W}
\def\bdwt{\mathbf{\dwt}}
\def\indexset{\mathcal{I}}

\def\L{\mathrm{T}}

\def\be{\mathbf{e}}

\def\diffop{\mathbf{\Delta}}

\def\1{\mathbbm{1}}

\def\bx{\mathbf{x}}

\def\Nset{\mathbb{N}} 
\def\Zset{\mathbb{Z}}

\def\Rset{\mathbb{R}} 
\def\PE{\mathbb{E}} 
\def\PVar{\mathrm{Var}}
\def\PCov{\mathrm{Cov}}

\def\calN{\mathcal{N}}

\def\calH{\mathcal{H}}

\def\ie{\textit{i.e.} }


\newcommand{\eqdef}{\ensuremath{\stackrel{\mathrm{def}}{=}}}
\newcommand{\eqsp}{\;}

\def\1{\mathbbm{1}}
\newcommand{\AVvar}[3][]
{
\ifthenelse{\equal{#1}{}}{\mathbf{V}_{#3}(#2)}{\mathbf{V}_{#3}(#2,#1)}}
\newcommand{\AVvarJoint}[3][]
{
\ifthenelse{\equal{#1}{}}{\mathbf{W}_{#3}(#2)}{\mathbf{W}_{#3}(#2,#1)}}

\newcommand{\AsympVarWWE}[2][]
{\mathrm{V}(#2)}
\newcommand{\AVvarInv}[3][]
{\ifthenelse{\equal{#1}{}}{\mathbf{V}^{-1}_{#3}(#2)}{\mathbf{V}^{-1}_{#3}(#2,#1)}}
\newcommand{\sigmaasymp}[2][]
{
\ifthenelse{\equal{#1}{}}{\sigma(#2)}{\sigma(#2,#1)}}

\def\vjsymb{\sigma}
\newcommand{\vj}[4][]{%
\ifthenelse{\equal{#1}{}}{\vjsymb^{#4}_{#2}(#3)}{\vjsymb^{#4}_{#2}(#3,#1)}}
\newcommand{\stdj}[3][]{%
\ifthenelse{\equal{#1}{}}{\vjsymb_{#2}(#3)}{\vjsymb_{#2}(#3,#1)}}

\def\rmd{\mathrm{d}}

\newcommand{\hvj}[3][]{%
\ifthenelse{\equal{#1}{}}{\hat{\vjsymb}^2_{#2}}{\hat{\vjsymb}^2_{#2}(#1)}}
\newcommand{\Kvar}[2][]{
\mathrm{K}(#2)}

\def\densletter{\mathbf{D}}
\newcommand{\bdens}[4][]{%
\densletter_{#2}({#3};#4)}

\def\densasympletter{D}

\newcommand{\bdensasymp}[4][]{%
\mathbf{\densasympletter}_{\infty,#2}({#3};#4)}
\newcommand{\bdensasympconv}[5][]{\mathbf{\densasympletter}^{#2}_{\infty,#3}({#4};#5)}
\newcommand{\bdenssingle}[6][]{\mathbf{\densasympletter}^{#2}_{#3,#4}({#5};#6)}

\def\regressweights{\mathbf{w}}

\def\d{\mathrm{d}}

\def\F{\textbf{F}}

\def\allWA{\ref{item:Wreg}--\ref{item:Wvstd}}

\renewcommand{\hat}{\widehat}
\renewcommand{\tilde}{\widetilde}

\newcommand\pscal[2]{\langle #1, #2 \rangle}

\def\X{\underline{\textbf{X}}}

\newcommand\pent[1]{\left\lfloor #1 \right\rfloor}

\def \2{2^{J_2-j}}
\def \2p{2^{J_2-j'}}

\newcounter{hypA}

\def\cd{\stackrel{d}{\longrightarrow}}

\newcommandtwoopt{\QRC}[4][][]
{\ifthenelse{\equal{#1}{}}
{\ifthenelse{\equal{#2}{}}{\mathrm{Q}_{#3}\left( #4\right)}{\mathrm{Q}_{#3}\left(#4,#2\right)}}
{\ifthenelse{\equal{#2}{}}{\mathrm{Q}^{#1}_{#3}\left(#4\right)}{\mathrm{Q}^{#1}_{#3}\left(#4,#2\right)}
}
}
\def\IF{\mathrm{IF}}
\def\cl{\mathrm{CL}}
\def\CR{\mathrm{CR}}

\usepackage{t1enc}
\newtheorem{theorem}{Theorem}
\newtheorem{lemma}[theorem]{Lemma}

\newtheorem{proposition}[theorem]{Proposition}

\newtheorem{remark}{Remark}

\renewcommand{\hat}{\widehat}
\def\ltwo{\ensuremath{\mathrm{L}^2}}
\def\iid{i.i.d.}

\title[CLT for the robust wavelet regression estimator]{Central limit theorem for the robust log-regression wavelet estimation of the memory parameter in the Gaussian semi-parametric context}
\author{O.~Kouamo}
\author{C.~L\'evy-leduc}
\author{E.~Moulines}
\address{ENSP, LIMSS, BP : 8390 Yaound\'e Cameroun, Institut Telecom/Telecom ParisTech - 46, rue Barrault, 75634
  Paris C\'edex 13, France. }
\email{olaf.kouamo@telecom-paristech.fr}
\address{CNRS/LTCI/Telecom ParisTech - 46, rue Barrault, 75634 Paris C\'edex 13, France.}
\email{celine.levy-leduc@telecom-paristech.fr}
\address{Institut Telecom/Telecom ParisTech - 46, rue Barrault, 75634
  Paris C\'edex 13, France.}
\email{eric.moulines@telecom-paristech.fr}
\keywords{Memory Parameter Estimator, Scale Estimator, Long Range
  Dependence, Robustness, Wavelet Analysis, Semi-Parametric estimation.}
\date{\today}

\begin{document}
\begin{abstract}

In this paper, we study robust estimators of the memory parameter $d$
of a (possibly) non stationary Gaussian time series with generalized
spectral density $f$. This generalized spectral density is
characterized by the memory parameter $d$ and by a function $f^\ast$ which specifies the
short-range dependence structure of the process.
Our setting is semi-parametric since both $f^\ast$ and $d$ are unknown
and $d$ is the only parameter of interest. The
memory parameter $d$ is estimated by
regressing the logarithm of the estimated variance of the wavelet
coefficients at different scales.
The two  estimators of $d$ that we consider are
based on robust estimators of the variance of the wavelet
coefficients, namely the square of
the scale estimator proposed by \cite{rousseeuw:croux:1993}
and the median of the square of the wavelet coefficients.
We establish a Central Limit Theorem for these robust estimators
as well as for the estimator of $d$ based on the classical estimator of the
variance proposed by \cite{moulines:roueff:taqqu:2007:fractals}.
Some Monte-Carlo experiments are presented to illustrate our claims
and compare the performance of the different estimators. The
properties of the three estimators are also compared on
the Nile River data and the Internet traffic packet counts data. The theoretical results and the empirical evidence strongly suggest
using the robust estimators as an alternative to estimate
the memory parameter $d$ of Gaussian time series.
\end{abstract}
\maketitle
\section{Introduction}\label{sec:intro}

Long-range dependent processes are characterized by hyperbolically slowly decaying correlations or by a spectral density exhibiting a fractional pole
at zero frequency. During the last decades, long-range dependence (and the closely related self-similarity phenomena)
has been observed in many different fields,
including financial econometrics, hydrology or analysis of Internet traffic.  In most of these applications, however,
the presence of atypical observations is quite common. These outliers might be due to gross errors in the observations but also to
unmodeled disturbances; see for example \cite{stoev:taqqu:park:2006} and \cite{stoev:taqqu:2005} for possible explanations of the presence of
outliers in Internet traffic analysis. It is well-known that even a few atypical observations
can severely affect estimators, leading to incorrect conclusions.
Hence, defining robust estimators of the memory parameter which are less sensitive to the
presence of additive outliers is a challenging practical problem.

In this paper, we consider the  class of fractional processes, denoted $M(d)$  defined as follows.
Let  $X= \{X_k \}_{k\in\Zset}$ be a real-valued Gaussian process, not necessarily stationary
and denote by $\diffop X$ the first order difference of $X$, defined by
$[\diffop X]_n= X_n - X_{n-1}$, $n \in \Zset$.
Define, for an integer $K\geq1,$ the $K$-th
order difference recursively as follows : $\diffop^K=\diffop\circ\diffop^{K-1}.$
Let $f^\ast$ be a bounded non-negative symmetric function which is bounded away
from zero in a neighborhood of the origin.
Following \cite{moulines:roueff:taqqu:2007:jtsa}, we say that $X$ is an $M(d)$ process
if for any integer $K>d-1/2$, $\diffop^KX$
is stationary with spectral density function
\begin{equation}\label{eq:spdelta}
f_{\diffop^K X}(\lambda) = |1-\rme^{-\rmi\lambda}|^{2(K-d)}\,f^\ast(\lambda),\quad  \lambda\in(-\pi,\pi) \; .
\end{equation}
Observe that $f_{\diffop^K X}(\lambda)$ in~(\ref{eq:spdelta}) is integrable since $-(K-d)<1/2$. 
When $d\geq1/2$, the process is not stationary. One can nevertheless associate to $X$ the function
\begin{equation}
\label{eq:SpectralDensity:FractionalProcess}
f(\lambda) = |1 - \rme^{-\rmi \lambda}|^{-2d}  f^\ast(\lambda)\; ,
\end{equation}
which is called a \textit{generalized spectral density function}.
In the sequel, we assume that $f^\ast\in\calH(\beta,L)$ with
$0 < \beta \leq 2$ and $L>0$ where $\calH(\beta,L)$ denotes the set of
non-negative and symmetric functions $g$
satisfying, for all  $\lambda\in(-\pi,\pi)$,
\begin{equation}\label{eq:Hbeta}
|g(\lambda)- g(0) | \leq L \, g(0) \, |\lambda|^\beta \eqsp .
\end{equation}
Our setting is semi-parametric in that both $d$ and $f^{\ast}$ in
\eqref{eq:SpectralDensity:FractionalProcess}
are unknown. Here, $f^{\ast}$ can be seen as a nuisance parameter
whereas $d$ is the parameter of interest.
This assumption on $f^\ast$ is  typical in the semi-parametric estimation setting; see for instance
\cite{robinson:1995:GSE} and \cite{moulines:soulier:2003} and the references therein.

Different approaches have been proposed for building robust estimators of the memory parameter for
M($d$) processes in the semi-parametric setting outlined above.
\cite{stoev:taqqu:park:2006} have proposed a robustified wavelet based-regression estimator
developed by \cite{abry:veitch:1998}; the robustification is achieved by replacing the estimation of the wavelet coefficients variance
at different scales by the median of the square of the wavelet coefficients.
Another technique to robustify the wavelet regression technique has been outlined in
\cite{park:park:2009} which consists in regressing the logarithm of the square of the wavelet coefficients
at different scales. \cite{fajardo:reisen:cribari:2009}  proposed a robustified
version of the log-periodogram regression  estimator introduced in \cite{geweke:porter-hudak:1983}. The method replaces
the log-periodogram of the observation by a robust estimator of the spectral density in the neighborhood of the zero frequency,
obtained as the discrete Fourier transform of a robust autocovariance estimator defined in
\cite{ma:genton:2000}; the procedure is appealing and has been found to work well but also lacks theoretical support in the semi-parametric context (note however that the consistency and the asymptotic normality of the robust estimator of the covariance have been discussed in \cite{levy-leduc:boistard:2009}).

In the related context of the estimation of the fractal dimension of locally self-similar Gaussian processes
\cite{coeurjolly:2008b} has proposed a robust estimator of the Hurst coefficient; instead of using the variance of the generalized
discrete variations of the process (which are closely related to the wavelet coefficients, despite the facts that the motivations are quite different), this author proposes to use the empirical quantiles and the trimmed-means. The consistency and asymptotic
normality of this estimator is established for a class of locally self-similar processes, using a Bahadur-type representation of the sample quantile;
see also \cite{coeurjolly:2008a}. \cite{shen:zhu:lee:2007} proposes to replace the classical regression of the wavelet coefficients
by a robust regression approach, based on Huberized M-estimators.

The two robust estimators of $d$ that we propose consist in
regressing the logarithm of robust variance estimators
of the wavelet coefficients of the process $X$ on a  range of scales.
We use as robust variance estimators the square of
the scale estimator proposed by \cite{rousseeuw:croux:1993}
and the square of the \textit{mean absolute deviation} (MAD).
These estimators are a robust alternative to the estimator of $d$ proposed by
\cite{moulines:roueff:taqqu:2007:fractals} which uses the same method
but with the classical variance estimator.
Here, we derive a Central Limit Theorem (CLT) for the two robust estimators
of $d$ and, by the way, we give another methodology for
obtaining a Central Limit Theorem for the estimator of $d$
proposed by \cite{moulines:roueff:taqqu:2007:fractals}.
In this paper, we have also established new results on the empirical process of array of stationary Gaussian processes by extending \cite[Theorem~4
]{arcones:1994} and the Theorem of
\cite{csorgo:mielniczuk:1996} to arrays of stationary Gaussian processes.
These new results were very helpful in establishing the CLT for the three estimators of $d$ that we propose.


The paper is organized as follows. In
Section~\ref{sec:wavelet-setting}, we introduce the wavelet setting
and define the wavelet based regression estimators of
$d$. Section~\ref{sec:sc:properties} is dedicated to the asymptotic
properties of the robust estimators of $d$. In this section, we derive
asymptotic expansions of the wavelet spectrum estimators and provide a CLT for the estimators of $d.$
In Section~\ref{sec:num:exp}, some Monte-Carlo experiments are presented in order to support our theoretical
claims. The Nile River data and two Internet traffic packet
counts datasets collected from  the University of North Carolina,
Chapel are studied as an application in
Section~\ref{sec:nile}.
 Sections~\ref{sec:proofs} and \ref{sec:lemmas}  detail the proofs of the theoretical results stated
in Section ~\ref{sec:sc:properties}. 

\section{Definition of the wavelet-based regression estimators of the memory parameter $d$.}
\label{sec:wavelet-setting}
\subsection{The wavelet setting}

The wavelet setting involves two functions $\phi$ and $\psi$ in
$\ltwo(\mathbb{R})$ and their Fourier transforms
\begin{equation}\label{eq:fourier_psi}
\hat{\phi}(\xi) \eqdef \int_{-\infty}^\infty \phi(t) \rme^{- \rmi \xi t}\,\d t \quad
\text{and}
\quad
\hat{\psi}(\xi) \eqdef \int_{-\infty}^\infty \psi(t) \rme^{- \rmi \xi t}\,\d t\;.
\end{equation}
Assume the following:
\begin{enum_W}
\item\label{item:Wreg} $\phi$ and $\psi$ are compactly-supported, integrable, and $\hat{\phi}(0) = \int_{-\infty}^\infty \phi(t)\,\d t = 1$ and $\int_{-\infty}^\infty \psi^2(t)\,\d t = 1$.
\item\label{item:psiHat}
There exists $\alpha>1$ such that
$\sup_{\xi\in\Rset}|\hat{\psi}(\xi)|\,(1+|\xi|)^{\alpha} <\infty$.
\item\label{item:MVM} The function $\psi$ has  $M$ vanishing moments,
  \ie $ \int_{-\infty}^\infty t^m \psi(t) \,dt=0$ for all $m=0,\dots,M-1$.
\item\label{item:MIM} The function $ \sum_{k\in\Zset} k^m\phi(\cdot-k)$
is a polynomial of degree $m$ for all $m=0,\dots,M-1$.
\end{enum_W}\def\allWA{\ref{item:Wreg}-\ref{item:MIM}}
Condition~\ref{item:psiHat} ensures that the Fourier transform $\hat{\psi}$ decreases quickly
to zero. Condition~\ref{item:MVM} ensures that $\psi$ oscillates and that its
scalar product with continuous-time polynomials up to degree $M-1$ vanishes. It is equivalent to asserting that the first
$M-1$ derivatives of $\hat{\psi}$ vanish at the origin and hence
\begin{equation}
\label{eq:MVM:rob}
|\hat{\psi}(\lambda)|=O(|\lambda|^{M})\; ,\;\text{as}\;\lambda\to0\;.
\end{equation}
Daubechies wavelets (with $M\geq 2$) and the Coiflets satisfy these
conditions, see \cite{moulines:roueff:taqqu:2007:fractals}.
Viewing the wavelet $\psi(t)$ as a basic template, define the family $\{\psi_{j,k}, j \in \Zset, k \in \Zset\}$ of translated and dilated functions
\begin{equation}\label{eq:psiJK}
\psi_{j,k}(t)=2^{-j/2}\,\psi(2^{-j}t-k) ,\quad j\in\Zset,\, k\in\Zset \eqsp .
\end{equation}
Positive values of $k$ translate $\psi$ to the right, negative values to the left. The \emph{scale index} $j$ dilates $\psi$
so that large values of $j$ correspond to coarse scales and hence to low frequencies.
We suppose throughout the paper that
\begin{equation}
\label{eq:ConditionD}
(1+\beta)/2-\alpha<d\leq M\,.
\end{equation}
We now describe how the wavelet coefficients are defined in discrete time, that is for a real-valued sequence
$\{x_k,\,k\in\Zset\}$ and for a finite sample $\{x_k,\,k=1,\dots,n\}$. Using the scaling function $\phi$, we first
interpolate these discrete values to construct the following continuous-time functions
\begin{equation}\label{eq:bX}
\bx_n(t) \eqdef \sum_{k=1}^n x_k \,\phi(t-k) \quad\text{and}\quad \bx(t) \eqdef \sum_{k\in\Zset} x_k\, \phi(t-k), \quad
t\in\Rset \; .
\end{equation}
Without loss of generality we may suppose that the support of the scaling function $\phi$ is included in $[-\L,0]$ for some
integer $\L\geq1$. Then
$$
\bx_n(t)=\bx(t)\quad\text{for all}\quad t\in[0, n-\L+1]\;.
$$
We may also suppose that the support of the wavelet function $\psi$ is included in $[0,\L]$. With these conventions, the support of $\psi_{j,k}$ is included in the
interval $[2^j k, 2^j(k+\L)]$. The wavelet coefficient $\dwt_{j,k}$ at scale $j\geq0$ and location $k\in\Zset$ is formally defined
as the scalar product in $\ltwo(\mathbb{R})$ of the function $t \mapsto \bx(t)$ and the wavelet $t \mapsto \psi_{j,k}(t)$:
\begin{equation}\label{eq:coeffN}
\dwt_{j,k} \eqdef \int_{-\infty}^\infty \bx(t) \psi_{j,k}(t)\,\d t
=\int_{-\infty}^\infty \bx_n(t) \psi_{j,k}(t)\,\d t,
 \quad  j \geq 0, k \in \Zset \;,
\end{equation}
when $[2^j k, 2^j k+\L]\subseteq [0, n-\L+1]$, that is, for all $(j,k)\in\indexset_n$, where
\begin{equation}
\label{eq:deltanrob}
\indexset_n \eqdef \{(j,k):\,j\geq0, 0\leq k \leq n_j-1 \}\quad\text{with}\quad n_j= [2^{-j}(n-\L+1)-\L+1] \eqsp.
\end{equation}

 If $\diffop^MX$ is stationary, then from \cite[Eq~(17)]{moulines:roueff:taqqu:2007:jtsa} the process $\{W_{j,k}\}_{k\in\Zset}$ of wavelet coefficients at scale
$j\geq0$ is stationary but the two--dimensional process $\{[W_{j,k},\,W_{j',k}]^T \}_{k\in \Zset}$ of wavelet
coefficients at scales $j$ and $j'$, with $j \geq j'$, is not stationary. Here  $^T$ denotes the
transposition. This is why we consider instead the stationary  \emph{between-scale} process
\begin{equation}\label{eq:betweenscaleProc}
\{[W_{j,k},\,\bdwt_{j,k}(j-j')^T]^T \}_{k\in \Zset}\;,
\end{equation}
where $\bdwt_{j,k}(j-j')$ is defined as follows:
\begin{equation*}
\bdwt_{j,k}(j-j') \eqdef \left[W_{j',2^{j-j'}k},\,W_{j',2^{j-j'}k+1},\,\dots,
  W_{j',2^{j-j'}k+2^{j-j'}-1}\right]^T.
\end{equation*}
%
For all $j,j'\geq1$, the covariance function of the between scale process is given by
\begin{equation}\label{eq:def:cov:rob}
\PCov(\bdwt_{j,k'}(j-j'),\dwt_{j,k}) = \int_{-\pi}^\pi \rme^{\rmi\lambda(k-k')} \,
\bdens[\phi,\psi]{j,j-j'}{\lambda}{f} \, d\lambda \; ,
\end{equation}
where $\bdens[\phi,\psi]{j,j-j'}{\lambda}{f}$ stands for  the cross-spectral
density function of this process.
For further details, we refer the reader to \cite[Corollary 1]{moulines:roueff:taqqu:2007:jtsa}.
The case $j=j'$ corresponds to the spectral density function of the \emph{within-scale} process $ \{ \dwt_{j,k} \}_{k \in \Zset}$.

In the sequel, we shall use that
the within- and between-scale spectral densities $\bdens[\phi,\psi]{j,j-j'}{\lambda}{d}$ of the process $X$ with
memory parameter $d\in\Rset$ can be approximated by the corresponding
spectral density of the generalized fractional Brownian motion
$B_{(d)}$ defined, for $d\in\Rset$ and $u \in \Nset$,
by
\begin{multline}\label{eq:bDpsi}
\bdensasymp[\psi]{u}{\lambda}{d} = \left[ \bdensasympconv[\psi]{(0)}{u}{\lambda}{d}, \dots, \bdensasympconv[\psi]{(2^u-1)}{u}{\lambda}{d} \right] \\
= \sum_{l\in\Zset} |\lambda+2l\pi|^{-2d}\,\be_{u}(\lambda+2l\pi) \,
\overline{\hat{\psi}(\lambda+2l\pi)}\hat{\psi}(2^{-{u}}(\lambda+2l\pi))\;,
\end{multline}
where,
$$
\be_{u}(\xi) \eqdef 2^{-{u}/2}\, [1, \rme^{-\rmi2^{-u}\xi}, \dots,
\rme^{-\rmi(2^{u}-1)2^{-u}\xi}]^T,\quad\xi\in\Rset\;.
$$
For further details, see \cite[p. 307]{moulines:roueff:taqqu:2007:fractals}.

\subsection{Definition of the robust estimators of $d$}\label{sec:def:d}
Let us now define robust estimators of the memory parameter $d$ of the
M($d$) process $X$ from the observations
$X_1,\dots,X_n$. These estimators are derived from the \cite{abry:veitch:1998} construction,
and consists in regressing estimators of the scale spectrum
\begin{equation}\label{eq:def_sigmaj}
\sigma_j^2\eqdef\PVar(W_{j,0})
\end{equation}
with respect to the scale index $j$. More precisely, if
$\hat{\sigma}_j^2$ is an estimator of $\sigma^2_j$ based on
$W_{j,0:n_j-1}=(W_{j,0},\dots,W_{j,n_j-1})$ then
an estimator of the memory parameter $d$ is obtained by
regressing $\log (\hvj{j}{n_j})$  for a finite number of scale indices
$j \in \{J_0, \dots, J_0+\ell\}$
where $J_0=J_0(n)\geq0$ is the lower scale and $1+\ell\geq2$ is the number of scales in the regression.
 The regression estimator can be expressed formally as
\begin{equation}\label{eq:definition:estimator:regression}
\hat{d}_{n}(J_0,\regressweights) \eqdef \sum_{j=J_0}^{J_0+\ell} w_{j-J_0} \log \left( \hvj{j}{n_j} \right) \eqsp ,
\end{equation}
where the vector $\regressweights \eqdef[w_0,\dots,w_{\ell}]^T$  of weights satisfies
$\sum_{i=0}^{\ell} w_{i}  = 0$ and $2 \log(2) \sum_{i=0}^{\ell} i w_{i}  = 1$,
see \cite{abry:veitch:1998} and  \cite{moulines:roueff:taqqu:2007:jtsa}. For $J_0\geq 1$ and $\ell>1,$ one may choose for example \textbf{w} corresponding to the least squares regression matrix, defined by $\textbf{w}=DB(B^TDB)^{-1}\mathbf{b}$ where
\begin{equation*}
  \label{eq:bAndB1}
\mathbf{b}\eqdef \left[\begin{matrix}
0&(2\log(2))^{-1}
\end{matrix}
\right],
\quad
B \eqdef \left[\begin{matrix}
1 & 1 & \dots & 1 \\
0 & 1 & \dots & \ell
\end{matrix}\right]^T
\end{equation*}
is the design matrix and $D$ is an arbitrary positive definite matrix. The best choice of $D$ depends on the memory parameter $d$. However a good approximation of this optimal matrix $D$ is the diagonal matrix with diagonal entries $D_{i,i}=2^{-i},$ $i=0\dots,\ell$; see \cite{fay:moulines:roueff:2008}
and the references therein. We will use this choice of the design matrix in the numerical experiments.
A heuristic justification for this choice is that
by \cite[Eq. (28)]{moulines:roueff:taqqu:2007:fractals},
\begin{equation}\label{eq:approx_sigmaj}
\sigma_j^2\sim C\, 2^{2jd}\; ,\textrm{ as }j\to\infty\;,
\end{equation}
where $C$ is a positive constant.

In the sequel, we shall consider three different estimators of $d$
based on three different estimators of the scale spectrum $\sigma_j^2$
with respect to the scale index $j$ which are defined below.

\subsubsection{Classical scale estimator}
This estimator has  been considered in the original contribution of \cite{abry:veitch:1998}
and consists in estimating the scale spectrum $\sigma^2_j$ with respect to the scale index $j$
by the empirical variance
\begin{equation}\label{eq:def:cl}
\hat{\sigma}^2_{\cl,j}=\frac{1}{n_j}\sum_{i=1}^{n_j}W^2_{j,i}\;,
\end{equation}
where for any $j$, $n_j$ denotes the number of available wavelet
coefficients at scale index $j$ defined in \eqref{eq:deltanrob}.

\subsubsection{Median absolute deviation}
This estimator is well-known to be a robust estimator of the scale and
as mentioned by \cite{rousseeuw:croux:1993} it has several appealing
properties: it is easy to compute and has the best possible breakdown
point (50\%).
Since the wavelet coefficients $W_{j,i}$ are centered Gaussian
observations, the square of the median absolute deviation of $W_{j,0:n_j-1}$ is
defined by
\begin{equation}
\hat{\sigma}^2_{\MAD,j}=\left(m(\Phi)\underset{0\leq
      i\leq n_j-1}{\med} |W_{j,i}|\right)^2\;,
\label{eq:def:mad}
\end{equation}
where  $\Phi$ denotes the c.d.f of a standard Gaussian random variable and
\begin{equation}\label{eq:mphi}
m(\Phi)=1/\Phi^{-1}(3/4)=1.4826\;.
\end{equation}
The use of the median estimator to estimate the scalogram has been suggested
 to estimate the memory parameter in \cite{stoev:taqqu:park:marron:2005} (see also \cite[p.~420]{percival:walden:2006}). A closely related technique is considered in \cite{coeurjolly:2008a} and \cite{coeurjolly:2008b} to estimate the Hurst coefficient of locally self-similar Gaussian processes. Note that
 the use of the median of the squared wavelet coefficients has been
 advocated to estimate the variance at a given scale in wavelet
 denoising applications; this technique is mentioned in
 \cite{donoho:johnstone:1994} to estimate 
the scalogram of the noise in the \iid\ context;  \cite{johnstone:silverman:1997} proposed
 to use this method in the long-range dependent context; 
the use of these estimators has not been however rigorously justified.

\subsubsection{The Croux and Rousseeuw estimator}

This estimator is another robust scale estimator introduced in
\cite{rousseeuw:croux:1993}. Its asymptotic properties in several
dependence contexts have been further studied in
\cite{levy-leduc:boistard:2009} and the square of this estimator is defined by
\begin{equation}
\label{eq:def:cr}
\hat{\sigma}^2_{\CR,j} = \left(c(\Phi)
  \{|W_{j,i}-W_{j,k}|;\ 0 \leq i,k \leq n_j-1\}_{(k_{n_{j}})}\right)^2\; ,
\end{equation}
where $c(\Phi)=2.21914$ and $k_{n_j} = \lfloor n_j^2/4 \rfloor$.
That is, up to the multiplicative constant $c(\Phi)$,
$\hat{\sigma}_{\CR,j}$ is the $k_{n_j}$th order statistics of the
$n_j^2$ distances $|W_{j,i}-W_{j,k}|$ between all the pairs of
observations.

\section{Asymptotic properties of the robust estimators of $d$}\label{sec:sc:properties}
\subsection{Properties of the scale spectrum estimators}
The following proposition gives an asymptotic expansion for
$\hat{\sigma}^2_{\cl,j}$,
$\hat{\sigma}^2_{\MAD,j}$ and
$\hat{\sigma}^2_{\CR,j}$
defined in
(\ref{eq:def:cl}),
(\ref{eq:def:mad}) and
(\ref{eq:def:cr}), respectively.
 These asymptotic expansions are used for deriving Central Limit
 Theorems for the different estimators of $d$.

\begin{proposition}\label{lemma:asymp:exp}
Assume that $X$ is a Gaussian $M(d)$ process with generalized spectral
density function defined in \eqref{eq:SpectralDensity:FractionalProcess} such that $f^\ast\in\mathcal{H}(\beta,L)$ for
some $L>0$ and $0<\beta\leq 2$.  Assume that
\ref{item:Wreg}-\ref{item:MIM} hold with $d,$ $\alpha$ and $M$
satisfying \eqref{eq:ConditionD}.
Let $W_{j,k}$ be the wavelet coefficients associated to $X$ defined
by (\ref{eq:coeffN}). If $n \mapsto J_0(n)$ is an integer valued sequence satisfying $J_0(n)\to\infty$ and
$n2^{-J_0(n)}\to\infty$, as $n\to\infty$, then $\hat{\sigma}^2_{\ast,j}$ defined in (\ref{eq:def:cl}),
(\ref{eq:def:mad}) and (\ref{eq:def:cr}), satisfies
the following asymptotic expansion, as $n\to\infty$, for any given $\ell \geq 1$
\begin{align}\label{exp}
\max_{J_0(n) \leq j \leq J_0(n)+\ell} \left| 
\sqrt{n_j}(\hat{\sigma}^2_{\ast,j}-\sigma^2_j)-\frac{2\sigma_j^2}{\sqrt{n_j}}\sum_{i=0}^{n_j-1}
\IF\left(\frac{W_{j,i}}{\sigma_j},\ast,\Phi\right)\right| = o_P(1)\;,
\end{align}
where $\ast$ denotes $\cl$, $\CR$ and $\MAD$, $\sigma_j^2$ is defined in
(\ref{eq:def_sigmaj}) and $\IF$ is given by
\begin{align}
\label{IF:cl}
&\IF\left(x,\cl,\Phi\right)=\frac{1}{2}H_2(x),\\
\label{IF:Q}
&\IF\left(x,\CR,\Phi\right)=c(\Phi)\left(\frac{1/4-\Phi(x+1/c(\Phi))
+\Phi(x-1/c(\Phi))}{\int_{\Rset} \varphi(y)\varphi(y+1/c(\Phi))\rmd y}\right)\;, \\\label{IF:mad}
&\IF(x,\MAD,\Phi)=-m(\Phi)\left(\frac{\left(\1_{\{x\leq 1/m(\Phi)\}}-3/4\right)-\left(\1_{\{x\leq -1/m(\Phi)\}}-1/4\right)}{2\varphi(1/m(\Phi))}\right)\;,
\end{align}
where $\varphi$ denotes the p.d.f of the standard Gaussian random
variable, $m(\Phi)$ and $c(\Phi)$ being defined in (\ref{eq:mphi}) and
(\ref{eq:def:cr}), respectively and $H_2(x)=x^2-1$ is the second Hermite polynomial.
\end{proposition}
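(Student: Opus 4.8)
The strategy is to treat each of the three scale estimators separately, but in each case reduce the claim to a known Bahadur-type linearization for the statistic in question applied to the array of stationary Gaussian sequences $\{W_{j,i}/\sigma_j\}_{0\le i\le n_j-1}$, uniformly over the finitely many scales $j\in\{J_0(n),\dots,J_0(n)+\ell\}$. Since $\ell$ is fixed, the uniformity in \eqref{exp} costs nothing beyond a union bound: it suffices to prove the expansion for each fixed $j=J_0(n)+m$, $m=0,\dots,\ell$, with a $o_P(1)$ remainder, since the maximum of finitely many $o_P(1)$ terms is $o_P(1)$. The essential structural fact I would invoke is that $W_{j,i}/\sigma_j$ is a centered, unit-variance stationary Gaussian sequence whose normalized covariances decay fast enough that partial sums of Hermite-rank-$\ge 2$ functionals of it are $O_P(\sqrt{n_j})$ and satisfy a CLT; this is exactly what the extensions of \cite{arcones:1994} and \cite{csorgo:mielniczuk:1996} to arrays announced in the introduction provide, and I would cite the relevant lemma from Section~\ref{sec:lemmas}.

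\textbf{Classical case.} Here $\hat\sigma^2_{\cl,j}-\sigma^2_j=\frac{\sigma_j^2}{n_j}\sum_{i=0}^{n_j-1}\bigl((W_{j,i}/\sigma_j)^2-1\bigr)=\frac{\sigma_j^2}{n_j}\sum_i H_2(W_{j,i}/\sigma_j)$, so \eqref{exp} holds \emph{exactly} with zero remainder once one reads off $\IF(x,\cl,\Phi)=\tfrac12 H_2(x)$; nothing to prove beyond the algebraic identity and the bookkeeping that the stated linear term matches.

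\textbf{MAD and CR cases.} These are the substantive parts. For the MAD, write $\hat\sigma_{\MAD,j}=m(\Phi)\,\mathrm{med}_i|W_{j,i}|$ and set $\theta_j\eqdef\mathrm{med}_i|W_{j,i}/\sigma_j|$; the population target is $1/m(\Phi)=\Phi^{-1}(3/4)$. The plan is: (i) establish a Bahadur representation $\sqrt{n_j}\bigl(\theta_j-1/m(\Phi)\bigr)=\frac{1}{\sqrt{n_j}}\sum_i\psi_{\MAD}(W_{j,i}/\sigma_j)+o_P(1)$ with $\psi_{\MAD}(x)=-\bigl((\1_{\{x\le 1/m(\Phi)\}}-3/4)-(\1_{\{x\le -1/m(\Phi)\}}-1/4)\bigr)/\bigl(2\varphi(1/m(\Phi))\bigr)$, using the empirical-process result for arrays together with monotonicity of the empirical c.d.f.\ of $|W_{j,i}/\sigma_j|$ and tightness of $\sqrt{n_j}\theta_j$; then (ii) apply the delta method to $x\mapsto(m(\Phi)x)^2\sigma_j^2$ around $x=1/m(\Phi)$, whose derivative is $2m(\Phi)^2\sigma_j^2\cdot(1/m(\Phi))=2m(\Phi)\sigma_j^2$, giving $\sqrt{n_j}(\hat\sigma^2_{\MAD,j}-\sigma^2_j)=2m(\Phi)\sigma_j^2\sqrt{n_j}(\theta_j-1/m(\Phi))+o_P(1)$; (iii) combine to read off $\IF(x,\MAD,\Phi)=m(\Phi)\psi_{\MAD}(x)$, which is \eqref{IF:mad}. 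For the CR estimator, $\hat\sigma_{\CR,j}=c(\Phi)\cdot(k_{n_j}\text{-th order statistic of }\{|W_{j,i}-W_{j,k}|\})$ with $k_{n_j}\sim n_j^2/4$, so its population target is the quantity $q$ with $H_{\Phi}(q)=1/4$ where $H_\Phi(t)\eqdef\prob(|Z-Z'|\le t)$ for independent standard Gaussians, namely $q=1/c(\Phi)$; the argument parallels the MAD case but uses a $U$-statistic-type empirical process: one needs a Bahadur/Hoeffding-type linearization of the $U$-process $\frac{1}{n_j^2}\sum_{i,k}\1_{\{|W_{j,i}-W_{j,k}|\le t\}}$ for the dependent array, whose Hájek projection onto single coordinates has kernel $x\mapsto 2\bigl(\Phi(x+t)-\Phi(x-t)\bigr)$; evaluating the projection at $t=1/c(\Phi)$ and dividing by the density $\frac{\rmd}{\rmd t}H_\Phi(t)\big|_{t=1/c(\Phi)}=2\int_\Rset\varphi(y)\varphi(y+1/c(\Phi))\,\rmd y$ yields the influence function, then the delta method through $x\mapsto(c(\Phi)x)^2\sigma_j^2$ produces the factor $c(\Phi)$ and gives \eqref{IF:Q}. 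The interchange between summing over pairs and the single-coordinate projection — i.e.\ that the degenerate (second-order) part of the $U$-process is negligible at rate $\sqrt{n_j}$ under the array's covariance decay — is the step I expect to be the main technical obstacle, and it is precisely where the extended Arcones/Cs\"org\H{o}--Mielniczuk results for arrays are needed; the accompanying estimates that $\sigma_j^2\asymp 2^{2jd}$ and $n_j\asymp n2^{-j}\to\infty$ keep all remainder terms uniform over the $O(1)$ scales in play.
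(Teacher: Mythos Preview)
Your treatment of the classical and MAD cases matches the paper's: for $\cl$ the expansion is an algebraic identity, and for $\MAD$ the paper does exactly what you outline, phrased as the functional Delta method (Hadamard differentiability of $T_0=T_2\circ T_1$ with $T_1(F)(r)=F(r)-F(-r^-)$ and $T_2(U)=U^{-1}(1/2)$) applied after the array empirical-process convergence of Theorem~\ref{theo:ext:csorgo}.

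For $\CR$, however, the paper takes a shorter route than your $U$-process linearization. Rather than treating $\hat\sigma_{\CR,j}$ as a $U$-quantile and controlling the degenerate part of the Hoeffding decomposition under dependence, the paper observes that the Rousseeuw--Croux statistic can be written as a functional of the \emph{marginal} empirical c.d.f.\ $F_{n_j}$, and invokes \cite[Lemma~1]{levy-leduc:boistard:2009} for its Hadamard differentiability; the same empirical-process convergence (Theorem~\ref{theo:ext:csorgo}) already established for the MAD case then suffices, and the functional Delta method delivers the influence-function expansion directly. This avoids entirely the step you flag as the ``main technical obstacle'' (negligibility of the degenerate kernel at rate $\sqrt{n_j}$ for the dependent array), which the paper's Theorems~\ref{theo:ext:arcones} and~\ref{theo:ext:csorgo} do not themselves cover since they concern only sums and marginal empirical processes, not $U$-processes. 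Your approach should still go through, but it would require an additional second-order moment bound for the degenerate part that the paper does not need to develop; the paper's route buys economy by reducing $\CR$ to the same one-dimensional empirical-process input as $\MAD$.
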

The proof is postponed to Section~\ref{sec:proofs}.


We deduce from Proposition \ref{lemma:asymp:exp} and Theorem
\ref{theo:ext:mult:arcones} given and proved in Section~\ref{sec:proofs} the following
multivariate Central Limit Theorem for the wavelet coefficient scales.
\begin{theorem}\label{theo:clt:wavelet}
 Under the assumptions of Proposition \ref{lemma:asymp:exp},
$(\hvj{\ast,J_0}{},\dots,\hvj{\ast,J_0+\ell}{})^T$,
where $\hvj{\ast,j}{}$ is defined in
(\ref{eq:def:cl}), (\ref{eq:def:mad}) and (\ref{eq:def:cr}),
satisfies the following multivariate Central Limit Theorem
\begin{equation}
\label{eq:JointCentralLimitEmpVar}
\sqrt{n2^{-J_0}}2^{-2J_0d} \left(\left[
\begin{array}{c}
\hvj{\ast,J_0}{}\\
\hvj{\ast,J_0+1}{}\\
\vdots\\
\hvj{\ast,J_0+\ell}{}
\end{array}
\right] -
\left[
\begin{array}{c}
\sigma^2_{\ast,J_0}\\
\sigma^2_{\ast,J_0+1}\\
\vdots\\
\sigma^2_{\ast,J_0+\ell}\\
\end{array}
\right] \right) \cd \calN\left(0, \mathbf{U}_{\ast}(d) \right) \;,
\end{equation}
where
\begin{multline}\label{eq:var:approx}
\mathbf{U}_{\ast,i,j}(d)= 4 (f^{\ast}(0))^2 \sum_{p\geq2}\frac{c_p^2(\IF_\ast)}{p!\,\Kvar[\psi]{d}^{p-2}}
\, 2^{d(2+p)i\vee j}2^{d(2-p)i\wedge j+i\wedge j}\\
\times \sum_{\tau\in\Zset}\sum_{r=0}^{2^{|i-j|}-1
}\Big(\int_{-\pi}^{\pi}\bdensasympconv[\psi]{(r)}{|i-j|}{\lambda}{d}\rme^{\rmi\lambda\tau}\d\lambda\Big)^p
\;,\; 0\leq i,j\leq \ell\;.
%
\end{multline}
In \eqref{eq:var:approx},
$\Kvar{d}\eqdef\int_\Rset|\xi|^{-2d}|\hat{\psi}(\xi)|\d\xi,$ $\bdensasymp[\psi]{|i-j|}{\cdot}{d}$ is the cross-spectral density defined in \eqref{eq:bDpsi},
$c_p(\IF_\ast)=\PE[\IF(X,\ast,\Phi)H_p(X)]$, where $H_p$ is the
$p$th Hermite polynomial and $\IF(\cdot,\ast,\Phi)$ is defined
in (\ref{IF:cl}), (\ref{IF:Q}) and (\ref{IF:mad}).
\end{theorem}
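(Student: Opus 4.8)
The plan is to reduce the joint CLT in \eqref{eq:JointCentralLimitEmpVar} to a statement about the vector of normalized partial sums of the influence functions, and then to apply the multivariate CLT for arrays of stationary Gaussian sequences (Theorem~\ref{theo:ext:mult:arcones}). First I would invoke Proposition~\ref{lemma:asymp:exp}: for each $\ast\in\{\cl,\CR,\MAD\}$ and each $j\in\{J_0,\dots,J_0+\ell\}$, the error term $\sqrt{n_j}(\hvj{\ast,j}{}-\sigma_j^2)$ equals $\frac{2\sigma_j^2}{\sqrt{n_j}}\sum_{i=0}^{n_j-1}\IF(W_{j,i}/\sigma_j,\ast,\Phi)$ up to an $o_P(1)$ remainder, uniformly in $j$ over the finite block. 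Hence it suffices to establish the CLT for the vector with entries $\frac{2\sigma_j^2}{\sqrt{n_j}}\sum_{i=0}^{n_j-1}\IF(W_{j,i}/\sigma_j,\ast,\Phi)$, $j=J_0,\dots,J_0+\ell$, after multiplication by the common normalization $\sqrt{n2^{-J_0}}\,2^{-2J_0 d}$. Since $n_j\sim 2^{-j}(n-\L+1)$ and, by \eqref{eq:approx_sigmaj}, $\sigma_j^2\sim C\,2^{2jd}(f^\ast(0))$ up to the appropriate constant, the scalar prefactors $\sqrt{n2^{-J_0}}\,2^{-2J_0 d}\cdot \frac{2\sigma_j^2}{\sqrt{n_j}}$ converge to finite nonzero limits; I would track these constants carefully, as they contribute the $2^{d(2+p)i\vee j}2^{\cdots}$ factors in \eqref{eq:var:approx} once combined with the Hermite expansion.

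Next I would perform the Hermite expansion of each $\IF(\cdot,\ast,\Phi)$: writing $\IF(x,\ast,\Phi)=\sum_{p\geq 2}\frac{c_p(\IF_\ast)}{p!}H_p(x)$ (the $p=0$ and $p=1$ coefficients vanish because the influence functions are centered and, by the Gaussian symmetry of $W_{j,i}$ and the evenness/oddness structure of the $\IF$'s, orthogonal to $H_1$; this is exactly why the sum in \eqref{eq:var:approx} starts at $p\geq 2$). One then works with $U_{j,i}\eqdef W_{j,i}/\sigma_j$, a stationary standardized Gaussian array whose within-scale and between-scale covariances are governed by the spectral densities $\bdensasymp[\psi]{|i-j|}{\cdot}{d}$ of \eqref{eq:bDpsi} via \eqref{eq:def:cov:rob}. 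The key analytic input, already used in \cite{moulines:roueff:taqqu:2007:fractals}, is that as $J_0\to\infty$ the renormalized covariance structure of the between-scale Gaussian vectors converges to that of the generalized fractional Brownian motion $B_{(d)}$, so that $\mathrm{Cum}$-type bounds (or direct covariance computations using the diagram formula for Hermite polynomials) give, for the cross-covariance of the $p$th Hermite components at scales $J_0+i$ and $J_0+j$, a limit proportional to $\sum_{\tau\in\Zset}\sum_{r=0}^{2^{|i-j|}-1}\big(\int_{-\pi}^\pi \bdensasympconv[\psi]{(r)}{|i-j|}{\lambda}{d}\rme^{\rmi\lambda\tau}\d\lambda\big)^p$. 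Summing over $p\geq 2$ with the combinatorial weights $p!\,\Kvar[\psi]{d}^{p-2}$ coming from the normalization of $B_{(d)}$ yields precisely $\mathbf{U}_{\ast,i,j}(d)$.

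The main obstacle, and the step I would treat most carefully, is verifying the hypotheses of the array CLT (Theorem~\ref{theo:ext:mult:arcones}): the influence functions $\IF(\cdot,\CR,\Phi)$ and $\IF(\cdot,\MAD,\Phi)$ have Hermite rank $2$ but are not polynomials, so one must check an $\ltwo$ integrability/summability condition on the coefficients $c_p(\IF_\ast)$ together with a summability condition on the covariances of the Gaussian array that is \emph{uniform} in the scale index $J_0=J_0(n)\to\infty$. This uniformity is the delicate point: it requires the spectral density bounds on $\bdens[\phi,\psi]{j,j-j'}{\cdot}{f}$ to hold uniformly in $j$ large, which follows from $f^\ast\in\mathcal{H}(\beta,L)$, condition~\eqref{eq:ConditionD}, and the decay~\eqref{eq:MVM:rob} of $\hat\psi$, but must be assembled with some care. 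Once the covariance array is shown to converge entrywise (to the matrix above) and the uniform domination is in place, the array CLT delivers joint asymptotic normality with covariance $\mathbf{U}_\ast(d)$, and combining with the $o_P(1)$ control from Proposition~\ref{lemma:asymp:exp} and Slutsky's lemma completes the proof.
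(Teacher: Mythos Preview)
Your overall strategy matches the paper's: invoke Proposition~\ref{lemma:asymp:exp}, reduce via Cram\'er--Wold to a scalar limit, expand $\IF$ in Hermite polynomials, and apply the array CLT of Theorem~\ref{theo:ext:mult:arcones}. Two points deserve sharpening.

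First, you have not explained how to cast the multi-scale sums in the format required by Theorem~\ref{theo:ext:mult:arcones}, which concerns a \emph{single} sum $n_J^{-1/2}\sum_{i=1}^{n_J}\mathcal{H}(\X_{J,i})$ over a fixed-dimensional stationary Gaussian vector. The sums $\sum_{i=0}^{n_{J_0+j}-1}\IF(W_{J_0+j,i}/\sigma_{J_0+j})$ run over different ranges $n_{J_0+j}$ for different $j$, so writing ``the vector with entries $\frac{2\sigma_j^2}{\sqrt{n_j}}\sum_i\IF(\cdot)$'' and then invoking the theorem is not yet a proof. The paper closes this gap by reindexing everything to the coarsest scale: for each $i\in\{0,\dots,n_{J_0+\ell}-1\}$ one forms the $(2^{\ell+1}-1)$-dimensional vector
\[
Y_{J_0,\ell,i}=\Big(\tfrac{W_{J_0+\ell,i}}{\sigma_{J_0+\ell}},\tfrac{W_{J_0+\ell-1,2i}}{\sigma_{J_0+\ell-1}},\tfrac{W_{J_0+\ell-1,2i+1}}{\sigma_{J_0+\ell-1}},\dots,\tfrac{W_{J_0,2^\ell i}}{\sigma_{J_0}},\dots,\tfrac{W_{J_0,2^\ell i+2^\ell-1}}{\sigma_{J_0}}\Big)^T,
\]
which \emph{is} stationary in $i$ (this is precisely the between-scale process of~\eqref{eq:betweenscaleProc}). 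The Cram\'er--Wold linear combination is then rewritten, up to $O(1)$ boundary terms that become $o_P(1)$ after division by $\sqrt{n_{J_0+\ell}}$, as $n_{J_0+\ell}^{-1/2}\sum_{i}\F(Y_{J_0,\ell,i})$ for an explicit $\F$ of Hermite rank $\geq 2$ (Lemma~\ref{lemma:rank}), and only then does Theorem~\ref{theo:ext:mult:arcones} apply.

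Second, your identification of the ``main obstacle'' is slightly off. The influence functions for $\MAD$ and $\CR$ are bounded, and Theorem~\ref{theo:ext:mult:arcones} is stated for bounded $h$; no separate $\ltwo$-summability of the $c_p(\IF_\ast)$ is needed. The spectral approximation~\eqref{eq:assumption:fJ0} follows directly from \cite[(26) and (29)]{moulines:roueff:taqqu:2007:jtsa}, as you anticipate. The step that actually requires care in the paper is the identification of the limiting variance: after Mehler's formula and the substitution $i'=2^{j-j'}q+r$, one faces a double series over $\tau\in\Zset$ and $p\geq 2$ whose termwise limits must be justified. The paper does this not by a uniform bound but by a dominated-convergence argument (Lemma~\ref{lemma:double:fatou}), with dominating sequence built from the $p=2$ term and controlled via \cite[(63)--(65)]{moulines:roueff:taqqu:2007:jtsa}.
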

The proof of Theorem~\ref{theo:clt:wavelet} is postponed to Section~\ref{sec:proofs}.
\begin{remark}\label{remark:efficiency}
Since for $\ast=\cl$, $\IF(\cdot)=H_2(\cdot)/2$, Theorem
\ref{theo:clt:wavelet} gives an alternative proof to \cite[Theorem 2]{moulines:roueff:taqqu:2007:fractals}
of the limiting covariance matrix of
$(\hvj{\cl,J_0}{},\dots,\hvj{\cl,J_0+\ell}{})^T$
which is given, for $0\leq i,j\leq \ell$, by
$$\mathbf{U}_{\cl,i,j}(d)=4\pi\left(f^\ast(0)\right)^2 2^{4d(i\vee j)+i\wedge
  j}
\int_{-\pi}^{\pi}|\bdensasymp[\psi]{|i-j|}{\lambda}{d}|^2\d\lambda\;.$$
Thus, for $\ast=\CR$ and $\ast=\MAD$, we deduce the following
\begin{equation}\label{eq:eff_rel}
\frac{\mathbf{U}_{\cl,i,i}(d)}{\mathbf{U}_{\ast,i,i}(d)}\geq
\frac{1/2}{\PE\left[\IF_\ast^2(Z)\right]}\;,
\end{equation}
where $Z$ is a standard Gaussian random variable.
With Lemma \ref{lemma:if}, we deduce from the inequality
\eqref{eq:eff_rel} that the asymptotic relative efficiency
of $\hat{\sigma}^2_{\ast,j}$ is larger than 36.76$\%$ when
$\ast=\MAD$ and larger than  82.27$\%$ when $\ast=\CR$.
\end{remark}

\subsection{CLT for the robust wavelet-based regression estimator}


Based on the results obtained in the previous section, we
derive a Central Limit Theorem for the robust wavelet-based regression
estimators of $d$ defined by

\begin{equation}\label{eq:hatd:clcrmad}
 \hat{d}_{\ast,n}(J_0,\regressweights) \eqdef \sum_{j=J_0}^{J_0+\ell} w_{j-J_0} \log \left( \hat{\sigma}^2_{\ast,j} \right)\;,
 \end{equation}
where $\hat{\sigma}^2_{\ast,j}$ are given for
$\ast=\cl$, $\MAD$ and $\CR$ by (\ref{eq:def:cl}), (\ref{eq:def:mad})
and (\ref{eq:def:cr}), respectively.

\begin{theorem}\label{theo:clt:hatd}
Under the same assumptions as in Proposition~\ref{lemma:asymp:exp} and if
\begin{align}\label{eq:condition:alpha:beta}
n2^{-(1+2\beta)J_0(n)}\to0\;, \textrm{ as } n\to\infty,
\end{align}
then, $\hat{d}_{\ast,n}(J_0,\regressweights)$ satisfies the following
Central Limit Theorem:
\begin{equation}
\label{eq:clt:hatd}
\sqrt{n2^{-J_0(n)}}\left(\hat{d}_{\ast,n}(J_0,\regressweights)-d\right)\cd\mathcal{N}\left(0,\regressweights^T\AVvar{d}{\ast}\regressweights\right)\,,
\end{equation}
where $\AVvar{d}{\ast}$ is the $(1+\ell)\times(1+\ell)$ matrix defined by
\begin{multline}
\label{eq:definitionAVvar}
\AVvar{d}{\ast,i,j} =
\sum_{p\geq2}\frac{4c_p^2(\IF_\ast)}{p!\,\Kvar[\psi]{d}^{p}}
 2^{pd|i-j|+i\wedge j}
\sum_{\tau\in\Zset}\sum_{r=0}^{2^{|i-j|}-1
}\Big(\int_{-\pi}^{\pi}\bdensasympconv[\psi]{(r)}{|i-j|}{\lambda}{d}\rme^{\rmi\lambda\tau}\d\lambda\Big)^p
\;,\; 0\leq i,j\leq \ell\;.
\end{multline}
In \eqref{eq:definitionAVvar},
$\Kvar{d}=\int_\Rset|\xi|^{-2d}|\hat{\psi}(\xi)|\d\xi,$ $\bdensasymp[\psi]{|i-j|}{\cdot}{d}$ is the cross-spectral density defined in \eqref{eq:bDpsi},
$c_p(\IF_\ast)=\PE[\IF(X,\ast,\Phi)H_p(X)]$, where $H_p$ is the
$p$th Hermite polynomial and $\IF(\cdot,\ast,\Phi)$ is defined
in (\ref{IF:cl}), (\ref{IF:Q}) and (\ref{IF:mad}).
\end{theorem}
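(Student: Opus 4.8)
The plan is to deduce Theorem~\ref{theo:clt:hatd} from Theorem~\ref{theo:clt:wavelet} by the delta method, together with a careful control of the bias term coming from the approximation $\sigma_j^2 \sim C 2^{2jd}$. First I would write, using the weight constraints $\sum_i w_i = 0$ and $2\log(2)\sum_i i w_i = 1$,
\begin{equation*}
\hat{d}_{\ast,n}(J_0,\regressweights) - d
= \sum_{j=J_0}^{J_0+\ell} w_{j-J_0}\Big(\log(\hat{\sigma}^2_{\ast,j}) - \log(\sigma^2_{\ast,j})\Big)
+ \Big(\sum_{j=J_0}^{J_0+\ell} w_{j-J_0}\log(\sigma^2_{\ast,j}) - d\Big)\; .
\end{equation*}
The second (deterministic) term is the bias. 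Using \cite[Eq.~(28)]{moulines:roueff:taqqu:2007:fractals} together with the Hölder condition $f^\ast\in\calH(\beta,L)$, one has $\sigma_j^2 = C 2^{2jd}(1 + O(2^{-\beta j}))$ with $C = f^\ast(0)\Kvar{d}$, so $\log(\sigma^2_{\ast,j}) = \log C + 2jd\log 2 + O(2^{-\beta J_0})$ uniformly in $j\in\{J_0,\dots,J_0+\ell\}$; the weight constraints annihilate $\log C$ and reproduce $d$, leaving a bias of order $O(2^{-\beta J_0})$. Multiplying by the normalisation $\sqrt{n2^{-J_0(n)}}$, the bias contributes $O(\sqrt{n}\,2^{-(1+2\beta)J_0/2})$, which tends to $0$ precisely by hypothesis~\eqref{eq:condition:alpha:beta}.

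Next I would handle the stochastic term via a first-order Taylor expansion of $\log$. Set $\delta_j \eqdef \hat{\sigma}^2_{\ast,j}/\sigma^2_{\ast,j} - 1$. Theorem~\ref{theo:clt:wavelet} (or already Proposition~\ref{lemma:asymp:exp}, via $\sigma_j^2\sim C2^{2jd}$ and $n_j\sim n2^{-J_0}2^{-(j-J_0)}$) shows that $\sqrt{n2^{-J_0}}\,2^{-2J_0 d}(\hat{\sigma}^2_{\ast,j}-\sigma^2_{\ast,j})$ is stochastically bounded, hence $\delta_j = O_P((n2^{-J_0})^{-1/2})\to 0$ for each fixed offset $j-J_0$. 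Then $\log(\hat{\sigma}^2_{\ast,j}) - \log(\sigma^2_{\ast,j}) = \delta_j - \tfrac12\delta_j^2 + \dots$, and since $\sqrt{n2^{-J_0}}\,\delta_j^2 = O_P((n2^{-J_0})^{-1/2}) = o_P(1)$, the remainder is negligible after normalisation. Therefore
\begin{equation*}
\sqrt{n2^{-J_0}}\big(\hat{d}_{\ast,n}(J_0,\regressweights)-d\big)
= \sqrt{n2^{-J_0}}\sum_{j=J_0}^{J_0+\ell} w_{j-J_0}\,\frac{\hat{\sigma}^2_{\ast,j}-\sigma^2_{\ast,j}}{\sigma^2_{\ast,j}} + o_P(1)\; .
\end{equation*}
Writing $\sigma^2_{\ast,j}\sim C2^{2jd} = C2^{2J_0 d}2^{2(j-J_0)d}$, this is, up to $o_P(1)$, a fixed linear functional of the vector appearing in \eqref{eq:JointCentralLimitEmpVar} with entry $j-J_0$ scaled by $w_{j-J_0}/(C2^{2(j-J_0)d})$.

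The conclusion is then pure linear algebra: applying the continuous mapping / linear-transformation theorem to the joint convergence \eqref{eq:JointCentralLimitEmpVar} with covariance $\mathbf{U}_{\ast}(d)$ gives a Gaussian limit with variance $\regressweights^T\AVvar{d}{\ast}\regressweights$, where $\AVvar{d}{\ast,i,j}$ is obtained from $\mathbf{U}_{\ast,i,j}(d)$ by dividing by $C^2 2^{2(i+j)d} = (f^\ast(0))^2\Kvar{d}^2\,2^{2(i+j)d}$; one checks from \eqref{eq:var:approx} that $\mathbf{U}_{\ast,i,j}(d)/((f^\ast(0))^2\Kvar{d}^2 2^{2(i+j)d})$ indeed reduces to \eqref{eq:definitionAVvar}, using $d(2+p)(i\vee j) + d(2-p)(i\wedge j) - 2d(i+j) = pd|i-j| - 2pd(i\wedge j) $... more precisely $d(2+p)(i\vee j)+(d(2-p)+1)(i\wedge j) -2d(i\vee j)-2d(i\wedge j) = pd(i\vee j)-pd(i\wedge j)+i\wedge j = pd|i-j|+i\wedge j$, matching the exponent in \eqref{eq:definitionAVvar}, and the factor $4c_p^2/(p!\,\Kvar{d}^{p-2})$ divided by $\Kvar{d}^2$ gives $4c_p^2/(p!\,\Kvar{d}^{p})$. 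I expect the main obstacle to be the uniform bookkeeping: making rigorous that the $o_P(1)$ and $O_P(\cdot)$ bounds in the Taylor expansion hold simultaneously over the (fixed but arbitrary) window of $1+\ell$ scales, and verifying that the normalisation in Proposition~\ref{lemma:asymp:exp}, which is per-scale $\sqrt{n_j}$, matches the common normalisation $\sqrt{n2^{-J_0}}2^{-2J_0 d}$ used in \eqref{eq:JointCentralLimitEmpVar} up to the scale-dependent constants that get absorbed into the weights — this is the same delicate alignment of normalisations that underlies Theorem~\ref{theo:clt:wavelet}, so once that theorem is granted the remaining work is routine.
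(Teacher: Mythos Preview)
Your proposal is correct and follows essentially the same route as the paper. The paper's proof is a single sentence invoking \cite[Proposition~3]{moulines:roueff:taqqu:2007:fractals} together with Theorem~\ref{theo:clt:wavelet}; that cited proposition packages exactly the two ingredients you spell out, namely the delta-method passage from the joint CLT for the scale estimators to the CLT for the log-regression estimator, and the bias control showing that $\sqrt{n2^{-J_0}}\bigl(\sum_j w_{j-J_0}\log\sigma_j^2 - d\bigr)\to 0$ under~\eqref{eq:condition:alpha:beta}. Your verification that $\mathbf{U}_{\ast,i,j}(d)/\bigl((f^\ast(0))^2\Kvar{d}^2 2^{2(i+j)d}\bigr)$ reduces to~\eqref{eq:definitionAVvar} is exactly the bookkeeping that connects the two theorems, and your exponent computation is correct. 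One cosmetic point: in several places you write $\sigma^2_{\ast,j}$ for the centring quantity, but this should be the true scale spectrum $\sigma_j^2$ from~\eqref{eq:def_sigmaj}, which does not depend on~$\ast$.
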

The proof of Theorem~\ref{theo:clt:hatd} is a straightforward
consequence of
\cite[Proposition~3]{moulines:roueff:taqqu:2007:fractals} and
Theorem~\ref{theo:clt:wavelet} and is thus not detailed here.
%
%
\begin{remark}
Since it is difficult to provide a theoretical lower bound for the
asymptotic relative efficiency (ARE) of
$\hat{d}_{\ast,n}(J_0,\regressweights)$
defined by
\begin{equation}\label{eq:ARE}
\textrm{ARE}_\ast(d)=\regressweights^T\AVvar{d}{\cl}\regressweights/\regressweights^T\AVvar{d}{\ast}\regressweights\;,
\end{equation}
where $\ast=\CR$ or $\MAD$, we propose to compute
this quantity empirically. We know from Theorem \ref{theo:clt:hatd}
that the expression of the limiting covariance matrix $\AVvar{d}{\ast,i,j}$ is valid
for all Gaussian $M(d)$ processes satisfying the assumptions given in
Proposition \ref{lemma:asymp:exp}, thus it is enough to compute
$\textrm{ARE}_\ast(d)$ 
in the particular case of a Gaussian ARFIMA(0,$d$,0)
process $(X_t)$. Such a process is defined by
\begin{equation}\label{e:FARIMA}
X_t=(I-B)^{-d} Z_t=\sum_{j\geq
  0}\frac{\Gamma(j+d)}{\Gamma(j+1)\Gamma(d)}Z_{t-j}\; ,
\end{equation}
where $\{Z_t\}$ are i.i.d $\mathcal{N}(0,1)$. We propose to evaluate
$\textrm{ARE}_\ast(d)$
when $d$ belongs to $[-0.8;3]$. 
With such a choice of $d$, both stationary and non-stationary
processes are considered. The empirical values of
$\textrm{ARE}_\ast(d)$ are given in Table~\ref{table:eff:d}.
The results were obtained from the observations $X_1,\dots,X_n$
where $n=2^{12}$ and 1000 independent replications.
We used Daubechies wavelets with $M=2$ vanishing moments when $d\leq 2$ and $M=4$ when $d>2$
which ensures that condition \eqref{eq:ConditionD} is satisfied.
The smallest scale is chosen to be $J_0=3$ and $J_0+\ell=8$.
{\footnotesize
\begin{table}[!h]
\centering
\begin{tabular}{ccccccccccccccc}
\hline\hline
				$d$ &-0.8&-0.4 &-0.2&0& 0.2 &0.6& 0.8&1 &1.2&1.6& 2 &2.2&2.6&3 \\
				\hline
        $\text{ARE}_{\CR}(d)$&0.72 &0.67 & 0.63& 0.65& 0.70&0.63 &0.70  &0.75& 0.76& 0.75& 0.79&0.74 &0.77 &  0.74 \\
        $\text{ARE}_{\MAD}(d)$ & 0.48& 0.39& 0.38& 0.36& 0.43& 0.39& 0.44 &0.47& 0.45& 0.50& 0.48& 0.5 &0.49 & 0.49\\
        \hline
        \hline
        \end{tabular}
        \caption{{\footnotesize Asymptotic relative efficiency of $\hat{d}_{n,\CR}$
          and $\hat{d}_{n,\MAD}$ with respect to $\hat{d}_{n,\cl}$.}}
\label{table:eff:d}
\end{table}
}

From Table \ref{table:eff:d}, we can see that $\hat{d}_{n,\CR}$ is
more efficient than $\hat{d}_{n,\MAD}$ and that its asymptotic
relative efficiency $\text{ARE}_{\CR}$ ranges from 0.63 to 0.79.
These results indicate empirically that the the loss of efficiency of the robust estimator
$\hat{d}_{n,\CR}$ is moderate and makes it
an attractive robust procedure to the non-robust estimator  $\hat{d}_{n,\cl}$.
\end{remark}

\section{Numerical experiments}\label{sec:num:exp}
In this section the robustness
properties of the different estimators of $d$, namely
$\hat{d}_{\cl,n}(J_0,\regressweights)$,
$\hat{d}_{\CR,n}(J_0,\regressweights)$ and
$\hat{d}_{\MAD,n}(J_0,\regressweights)$, that are defined
in Section \ref{sec:def:d} are investigated using Monte Carlo experiments.
In the sequel, the memory parameter
$d$ is estimated from $n=2^{12}$ observations of a Gaussian ARFIMA(0,$d$,0)
process defined in (\ref{e:FARIMA})
when $d$=0.2 and 1.2 eventually corrupted by additive outliers.
We use the Daubechies wavelets with $M=2$ vanishing moments which ensures that condition \eqref{eq:ConditionD} is
satisfied.

Let us first explain how to choose the parameters $J_0$ and
$J_0+\ell$. With $n=2^{12}$, the maximal
available scale is equal to 10. 
Choosing $J_0$ too small may introduce a bias
in the estimation of $d$ by Theorem \ref{theo:clt:hatd}. However,
at coarse scales (large values of $J_0$), the number of observations
may be too small and thus choosing $J_0$ too large may yield a large
variance. 
Since at scales $j=9$ and $j=10,$ we have respectively 5 and 1
observations, we chose $J_0+\ell=8.$ For the choice of $J_0$, we 
proposed to use the empirical rule illustrated in Figure~\ref{fig:std:d}.
In this figure, we display the estimates $\hat{d}_{n,\cl}$,
$\hat{d}_{n,\CR}$ and $\hat{d}_{n,\MAD}$ of the memory parameter $d$
as well as their respective 95$\%$ confidence intervals from $J_0=1$
to $J_0=7$ with $J_0+\ell=8$. We propose to choose $J_0=3$
in both cases ($d=0.2$ and $d=1.2$) since the successive confidence
intervals starting from $J_0=3$ to $J_0=7$ are such that the smallest
one is included in the largest one. We shall take $J_0=3$
in the sequel.
\begin{figure}[!h]
\begin{tabular}{cc}
\includegraphics[width=0.4\textwidth,angle=-90]{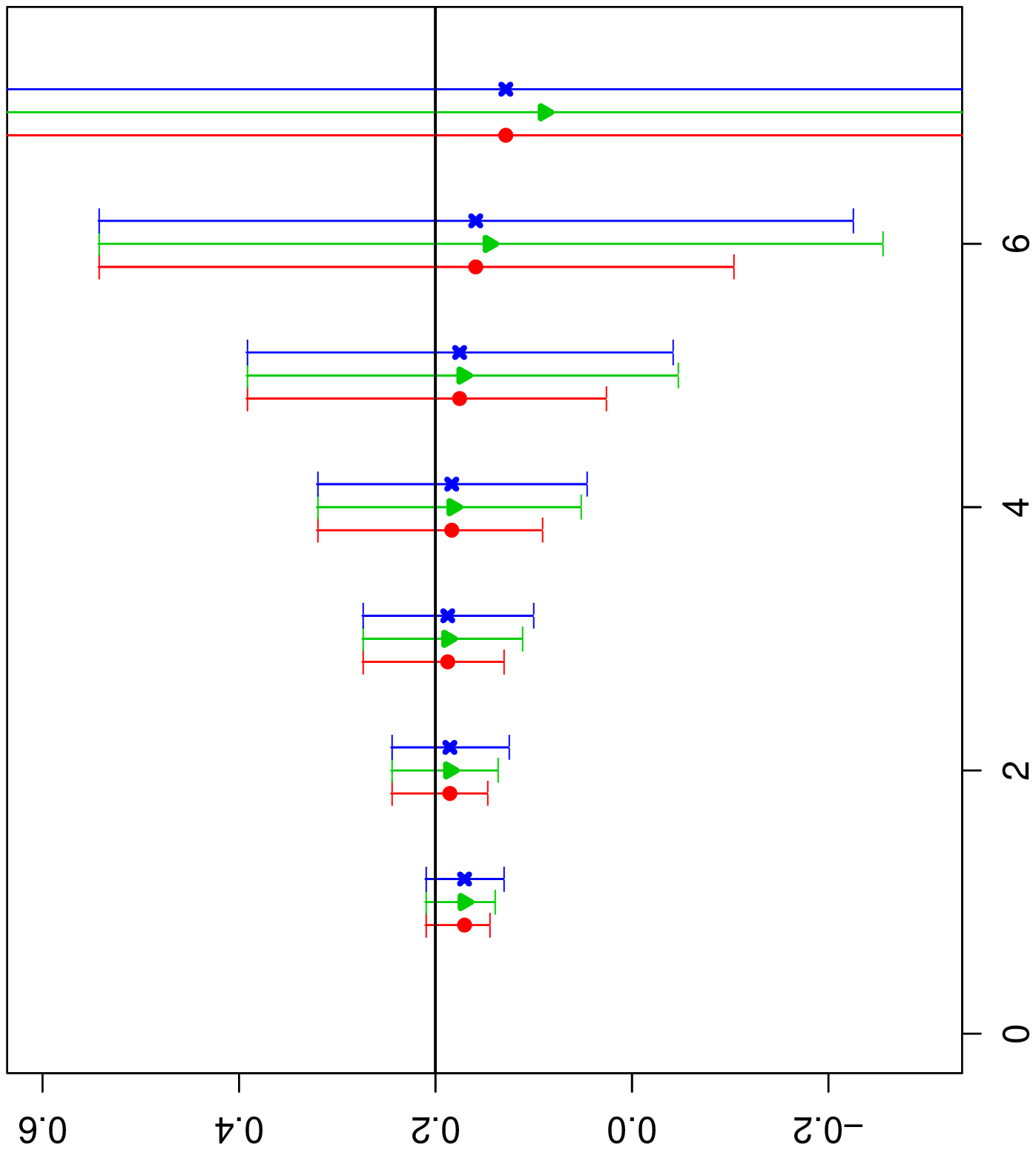}
&\includegraphics[width=0.4\textwidth,angle=-90]{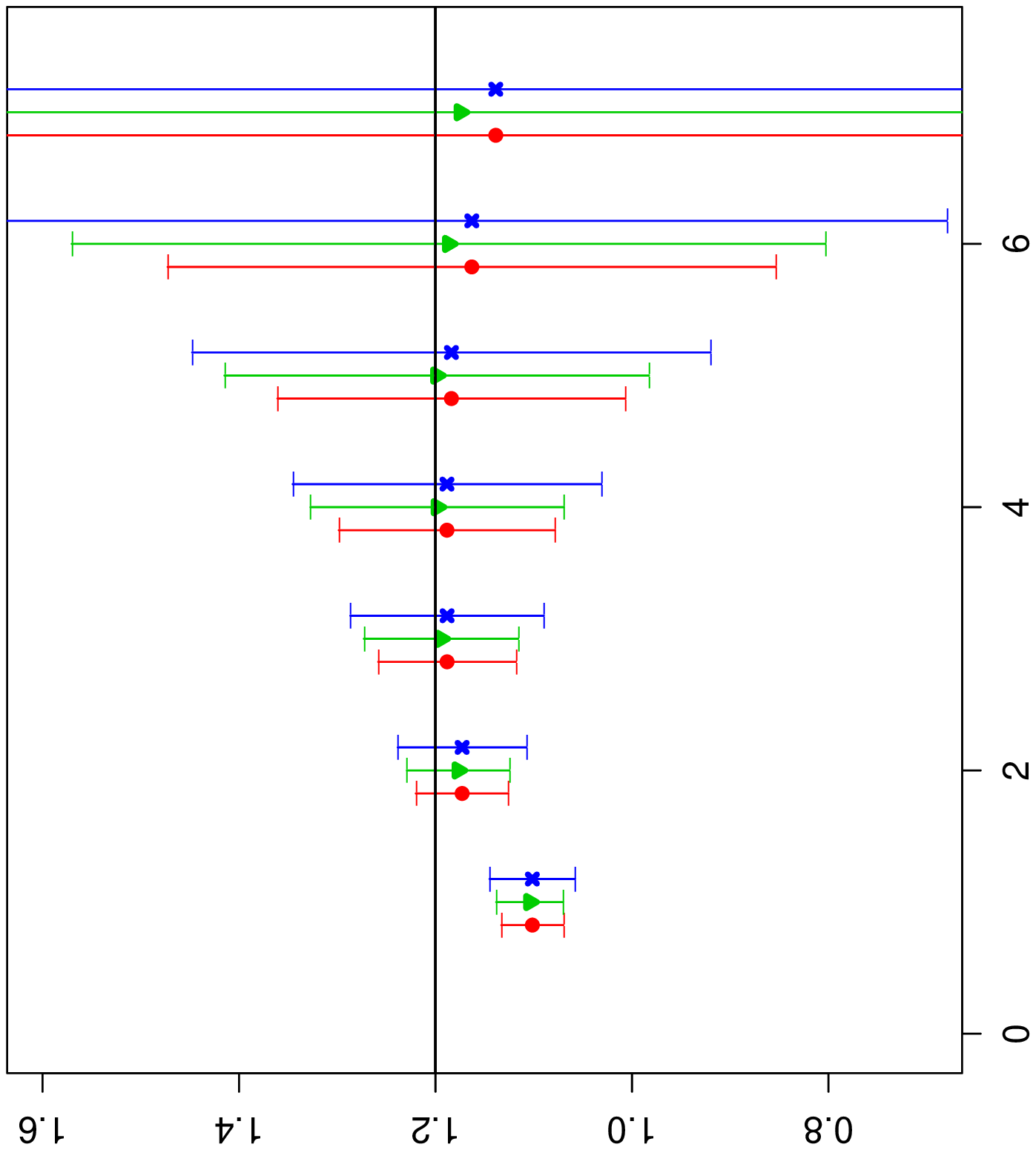}
\end{tabular}
\vspace{-1cm}
\caption{\footnotesize{Confidence intervals of the estimates $\hat{d}_{n,\cl}$, $\hat{d}_{n,\CR}$ and $\hat{d}_{n,\MAD}$ of an ARFIMA($0,d,0$) process
with $d=0.2$ (left) and $d=1.2$ (right) for $J_0=1,\dots,8$
and $J_0+\ell=9$.
For each $J_0$, are displayed confidence interval 
associated to
$\hat{d}_{n,\cl}$ (red), interval$\hat{d}_{n,\CR}$ (gren) and $\hat{d}_{n,\MAD}$ (blue), respectively.}}
\label{fig:std:d}
\end{figure}

%
 In the left panels of Figures \ref{fig:clt_02} and \ref{fig:clt_12}  the empirical
  distribution of  $\sqrt{n2^{-J_0}}(\hat{d}_{\ast,n}-d)$
 are displayed when $\ast=\cl,\MAD$ and $\CR$ for the ARFIMA(0,$d$,0) model with
$d=0.2$ (Figure~\ref{fig:clt_02}) and $d=1.2$ (Figure~\ref{fig:clt_12}), respectively. They were computed using 5000
replications; their shapes are close to the Gaussian density (the standard deviations are of course different).
In the right panels of Figures \ref{fig:clt_02} and \ref{fig:clt_12},  the empirical
  distribution of  $\sqrt{n2^{-J_0}}(\hat{d}_{\ast,n}-d)$ are displayed  when
outliers are present. We introduce $1\%$ of additive outliers in the
observations; these outliers are obtained by choosing uniformly at random a time index and by
adding to the selected observation 5 times the standard error of the raw observations.
The  empirical distribution of $\sqrt{n2^{-J_0}}(\hat{d}_{\cl,n}-d)$  is
clearly located far away from zero especially in the non stationary ARFIMA($0,1.2,0$) model. One can also observe the
considerable increase in the variance of the classical estimator.
In sharp contrast, the distribution of the robust estimators
$\sqrt{n2^{-J_0}}(\hat{d}_{\MAD,n}-d)$ and
$\sqrt{n2^{-J_0}}(\hat{d}_{\CR,n}-d)$ stays symmetric and the variance stays constant.
\begin{figure}[!h]
\begin{tabular}{cc}
\includegraphics*[width=0.4\textwidth]{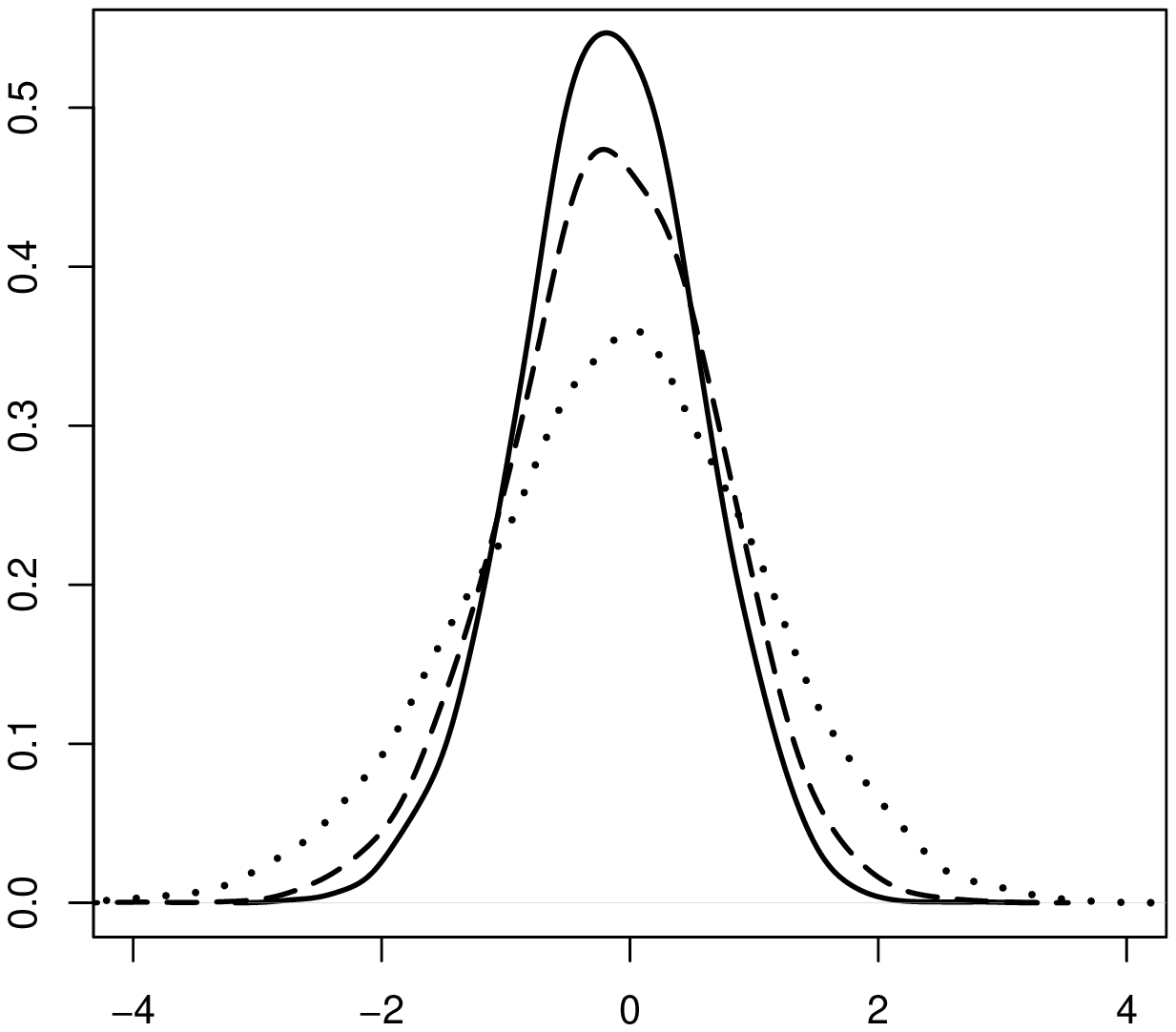}
&\includegraphics*[width=0.4\textwidth]{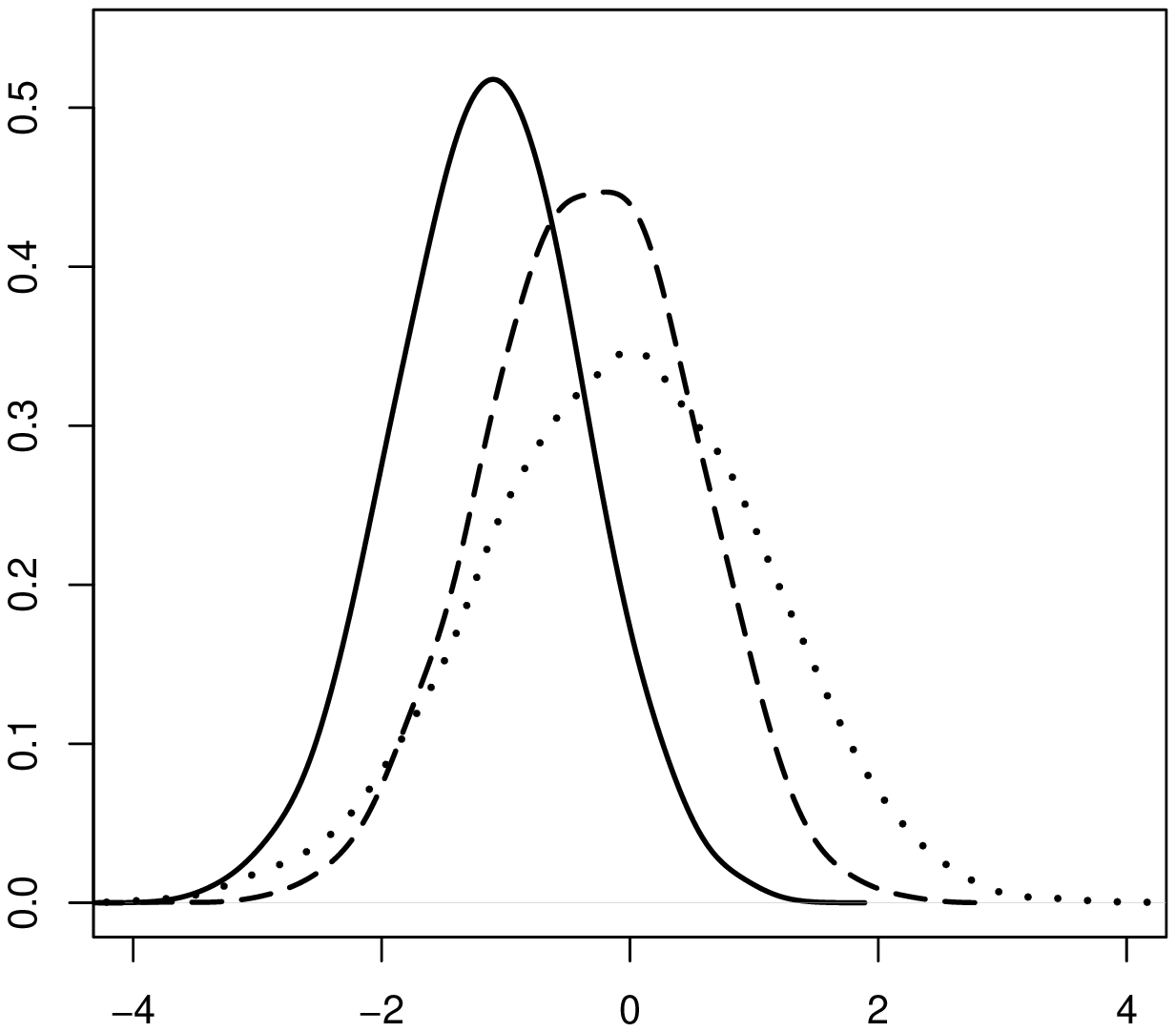}
\end{tabular}
\vspace{-1cm}
\caption{\footnotesize{Empirical densities of the
    quantities $\sqrt{n 2^{-J_0}}(\hat{d}_{\ast,n}-d)$, with
    $\ast=\cl$ (solid line), $\ast=\CR$ (dashed line) and $\ast=\MAD$
    (dotted line) of the ARFIMA(0,0.2,0) model without outliers (left) and
    with 1$\%$ of outliers (right).}}
\label{fig:clt_02}
\end{figure}
\begin{figure}[!h]
\begin{tabular}{cc}
\includegraphics*[width=0.4\textwidth]{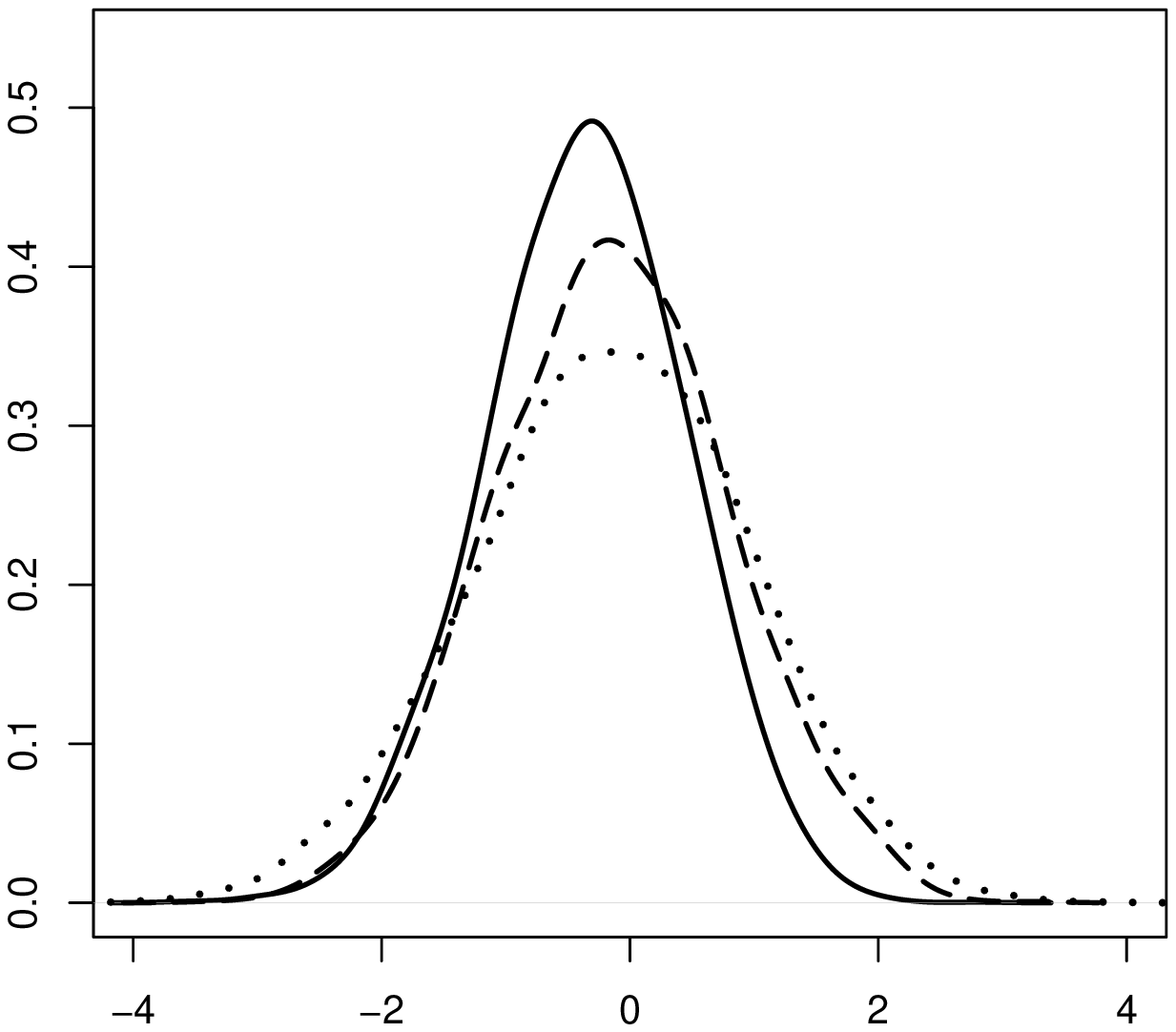}
&\includegraphics*[width=0.4\textwidth]{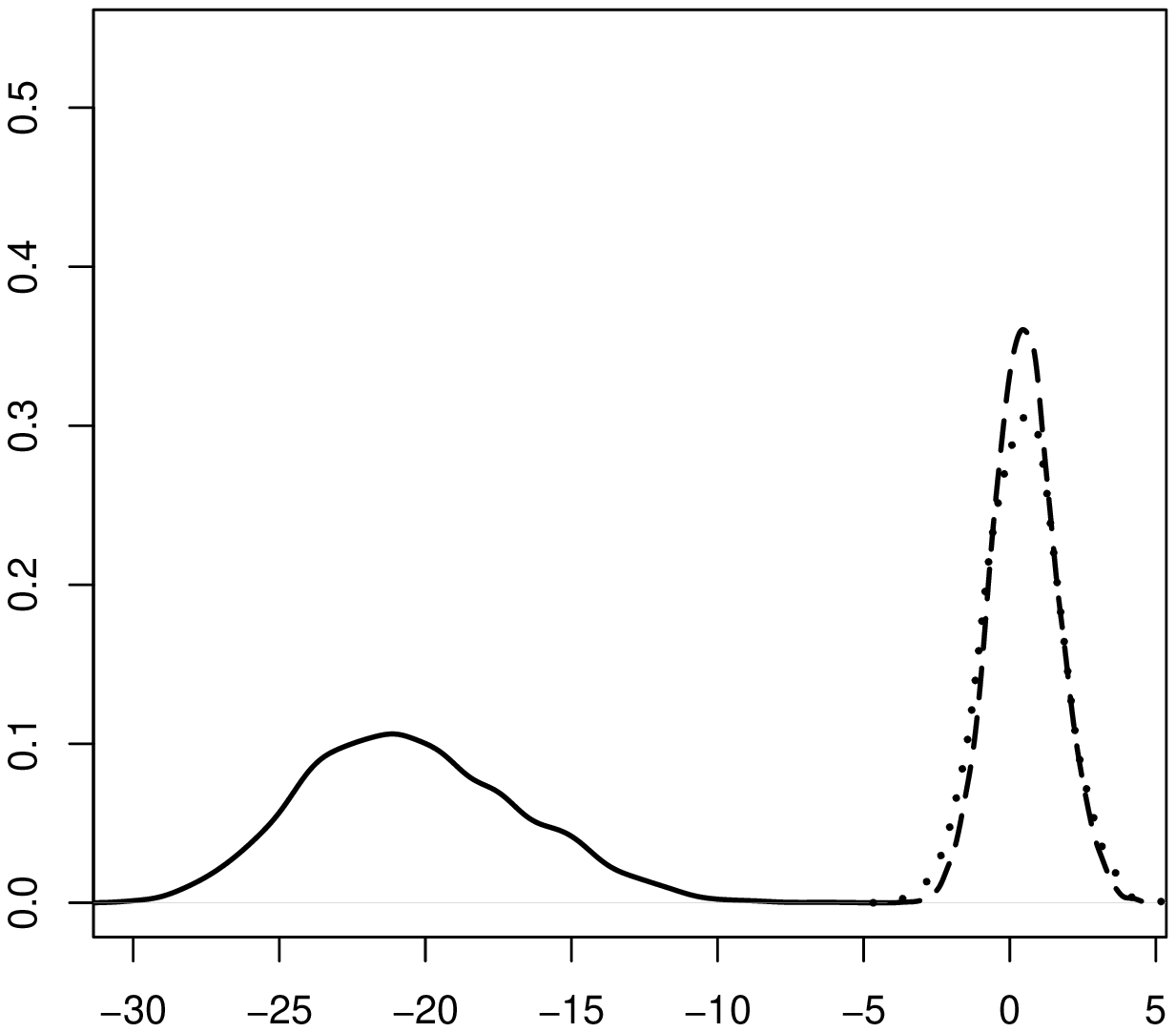}
\end{tabular}
\vspace{-1cm}
\caption{\footnotesize{Empirical densities of the
    quantities $\sqrt{n 2^{-J_0}}(\hat{d}_{\ast,n}-d)$, with
    $\ast=\cl$ (solid line), $\ast=\CR$ (dashed line) and $\ast=\MAD$
    (dotted line) of the ARFIMA(0,1.2,0) model without outliers (left) and
    with 1$\%$ of outliers (right).}}
\label{fig:clt_12}
\end{figure}
\section{Application to real Data}\label{sec:nile}

In this section, we compare the performance of
the different estimators of the long memory parameter $d$ introduced
in Section \ref{sec:def:d} on two different real datasets.

\subsection{Nile River data}
The Nile River dataset is a well-known time series, which has
been extensively analyzed; see \cite[Section~1.4,p.~20]{beran:1994}.
The data consists of yearly minimal water levels of the Nile river measured at the
Roda gauge, near Cairo,  for the years 622--1284~AD and contains 663 observations;
The units for the data as presented by \cite{beran:1994} are
centimeters.
The empirical mean and the standard deviation of the data are equal to 1148 and 89.05, respectively.
The question has been raised as to whether the Nile time series
contains  outliers; see for example \cite{beran:1992},
\cite{robinson:1995:GSE}, \cite{chareka:matarise:turner:2006} and \cite{fajardo:reisen:cribari:2009}.
The test procedure developed by \cite{chareka:matarise:turner:2006}
suggests the presence of outliers at 646 AD ($p$-value 0.0308) and at
809 ($p$-value 0.0007). Another possible outliers is at 878 AD.
Since the number of observations is small, in the estimation of $d,$ we took $J_0=1$ and $J_0+\ell=6$. With
this choice, we observe a significant difference between the classical estimators $\hat{d}_{n,\cl}=0.28$ (with 95\% confidence interval [0.23, 0.32]) and the robust estimators $\hat{d}_{n,\CR}=0.408$ (with 95\% confidence interval [0.34, 0.46]) and $\hat{d}_{n,\MAD}=0.414$ (with 95\% confidence interval [0.34, 0.49]).
Thus, to better
understand the influence of outliers on the estimated memory parameter
in practical situations, a new dataset with artificial outliers was
generated.
Here, we replaced the presumed outliers of
\cite{chareka:matarise:turner:2006}
by the value of the observation plus 10 times the standard deviation.
The new memory parameter estimators are $\hat{d}_{n,\cl}=0.12$,
$\hat{d}_{n,\CR}=0.4$
and $\hat{d}_{n,\MAD}=0.392.$ As was expected, the values of the
robust estimators remained stable.
However, the classical estimator of $d$ was significantly affected.
A robust estimate of $d$ for the Nile data is also given in
\cite{agostinelli:bisaglia:2003} and in
\cite{fajardo:reisen:cribari:2009}.
The authors found 0.412 and 0.416, respectively.
These values are very close to $\hat{d}_{n,\CR}=0.408$ and $\hat{d}_{n,\MAD}=0.414$.

\subsection{Internet traffic packet counts data}

In this section,  two Internet traffic packet
counts datasets collected at  the University of North Carolina,
Chapel (UNC) are analyzed. These datasets are available from the website
http://netlab.cs.unc.edu/public/old\_research/net\_lrd/.
These datasets have been studied by \cite{park:park:2009}.

Figure~\ref{fig:sat1930} (left) displays a packet count time series
 measured at the link of UNC on April 13, Saturday, from 7:30
p.m. to 9:30 p.m., 2002 (Sat1930). Figure~\ref{fig:sat1930} (right) displays
the same type of time series but on April 11, a Thursday, from 1 p.m. to 3 p.m., 2002 (Thu1300).
These packet counts were measured every 1 millisecond but, for a better
display, we aggregated them at 1 second.

\begin{figure}[!ht]\vspace{-.41cm}
\begin{tabular}{cc}
\includegraphics*[width=0.4\textwidth]{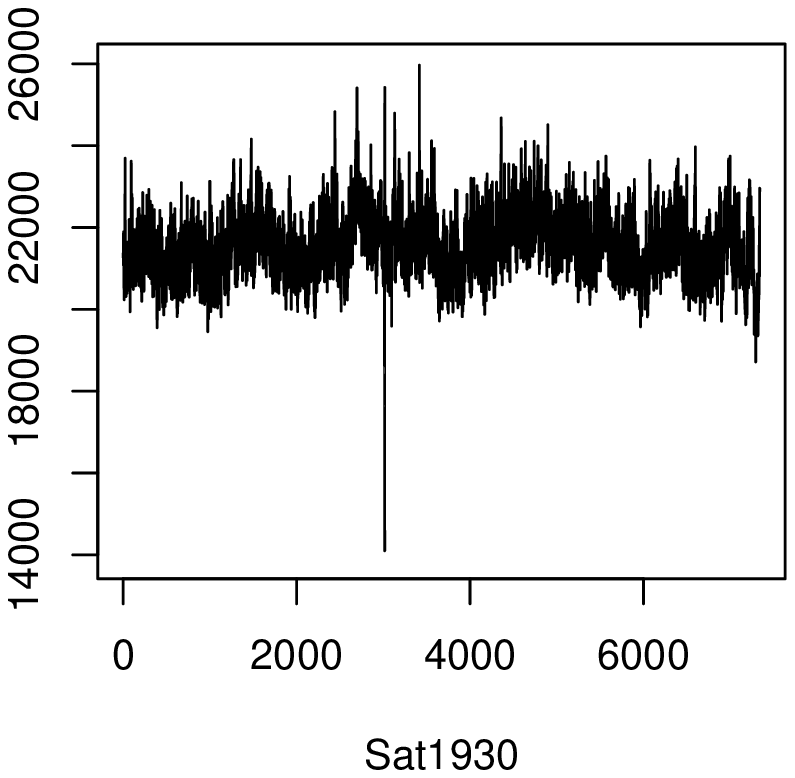}
\includegraphics*[width=0.4\textwidth]{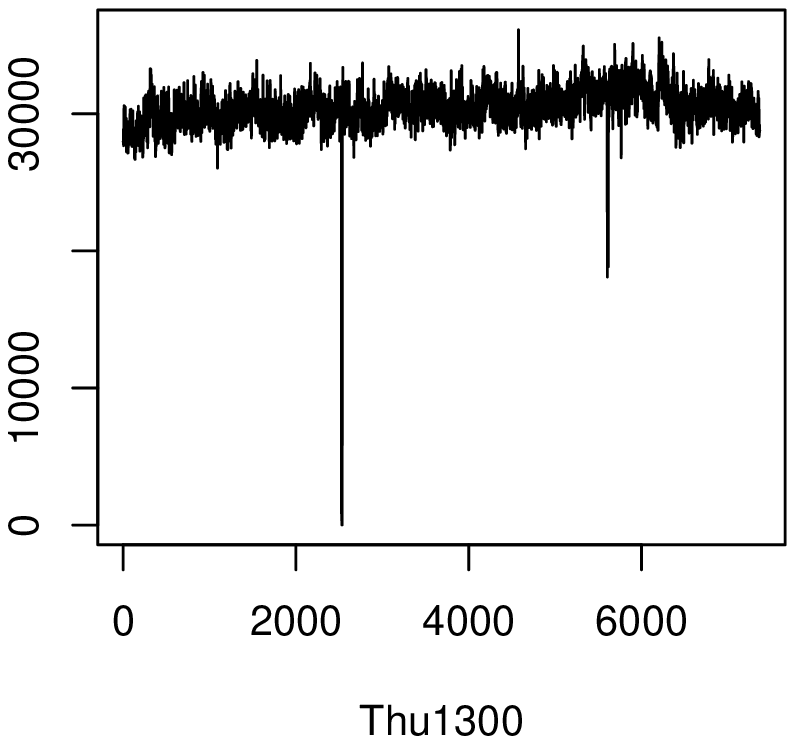}
\end{tabular}
\vspace{-0.7cm}
\caption{\footnotesize{Packet counts of aggregated traffic every 1 second.}}
\label{fig:sat1930}
\end{figure}
The maximal available scale for the two datasets is 20. Since we have
less than 4 observations at this scale, we set the coarse scale
$J_0+\ell=19$ and vary the finest scale $J_0$ from 1 to 17. The values
of the three estimators of $d$ are stored in
Table~\ref{tab:d:cl:rob:mad} for $J_0=1$ to 14 as well as the
standard errors of $\sqrt{n2^{-J_0}}(\hat{d}_{n,\ast}-d)$ for the two datasets:
Thu1300 and Sat1930.

{\tiny
\begin{table}[!h]
\begin{tabular}{ccccccccccccccc}\hline
$J_0$ & 1& 2 & 3 & 4&5&6&7&8&9&10&11&12&13&14\\
\hline\hline
&&&&& \multicolumn{3}{c}{Thu1300} &&&&& \\
\hline
 $\hat{d}_{n,\cl}$&0.08& 0.09 &0.11 &0.15 &0.19 &0.25 &0.31 &0.39& 0.43 &0.47 &0.51 &0.49& 0.44& 0.41\\
 $\mathrm{SE}_\cl$& (0.52) & (0.56) &(0.51) & (0.52) & (0.57)& (0.52)& (0.56)& (1.45)& (0.74)& (0.76)&(0.87) &(0.91) &( 1.10)& (1.21)\\
 $\hat{d}_{n,\CR}$&0.08& 0.07 &0.07 &0.09& 0.13& 0.19 &0.28 & 0.34 &0.37& 0.40& 0.42 &0.43 &0.48 & 0.45\\
$\mathrm{SE}_\CR$ &(0.55) &(0.58)&(0.61) &(0.63) &(0.59) & (0.6)& (0.67) & (1.42) &  (0.82)& (0.88)& (0.97) &(1.08) & (1.18) &(1.23)\\
$\hat{d}_{n,\MAD}$& 0.08 & 0.08& 0.07 & 0.09&0.13& 0.19& 0.27&0.33 &0.38 &0.40 &0.43 & 0.43 &0.5& 0.48\\
$\mathrm{SE}_\MAD$ & (0.74)&(0.87) &(0.78) &(0.83) & (0.86) & (0.84) & (0.91) & (1.49)& (0.98)& (1.04) &(1.07) & (1.15) &(1.18) &(1.2)\\
 \hline\hline
&&&&& \multicolumn{3}{c}{Sat1930}&&&&& \\\hline
$\hat{d}_{n,\cl}$&0.05& 0.06& 0.08& 0.11& 0.14& 0.17& 0.23&0.28&  0.33&  0.36& 0.37 & 0.39& 0.42& 0.42\\
$\mathrm{SE}_\cl$&(0.41) &(0.47)& (0.43)& (0.48)& (0.47)& (0.48)& (0.46)& (0.89)& (0.54)& (0.61)& (0.70)& (0.80)& (1.11) &(1.24)\\
 $\hat{d}_{n,\CR}$& 0.06 &0.06 & 0.06 &0.09 &0.12 &0.16& 0.23& 0.3& 0.34& 0.38& 0.4& 0.42& 0.44& 0.42\\
$\mathrm{SE}_\CR$&(0.51) &(0.47)& (0.54) &(0.48)& (0.48)& (0.53)& (0.56)  &(0.90) & (0.81) &(0.70) &(0.88)& (0.96) &(1.21)& (1.26)\\
$\hat{d}_{n,\MAD}$& 0.06& 0.06& 0.07& 0.09& 0.11& 0.16& 0.23& 0.29& 0.33& 0.38& 0.4& 0.43& 0.45& 0.4\\
$\mathrm{SE}_\MAD$&(0.59)& (0.77)& (0.72)& (0.81)& (0.70)& (0.89)& (0.82)& (0.64)&  (1.13)& (0.99) &(1.10)& (1.34)& (1.49) &(1.38)\\
\hline
 \end{tabular}
\caption{{\footnotesize Estimators of $d$ with $J_0=1$ to $J_0=14$ and
    $J_0+\ell=19$ obtained from Thu1300 and Sat1930. Here SE denotes
    the standard error of $\sqrt{n2^{-J_0}}(\hat{d}_{n,\ast}-d)$.}}
\label{tab:d:cl:rob:mad}
\end{table}
}


In Figure~\ref{fig:estimddata}, we display the estimates $\hat{d}_{n,\cl}$,
$\hat{d}_{n,\CR}$ and $\hat{d}_{n,\MAD}$ of the memory parameter $d$
as well as their respective 95$\%$ confidence intervals from $J_0=1$
to $J_0=14$. We propose to choose $J_0=9$ for Thu1300 and $J_0=10$
for Sat1930 since from these values of $J_0$ the successive confidence
intervals are such that the smallest
one is included in the largest one (for the robust estimators). Note that \cite{park:park:2009}
chose the same values of $J_0$ using another methodology.
For these values of $J_0$ we obtain $\hat{d}_{n,\cl}=0.43$ (with 95\% confidence interval [0.412, 0.443]) ,
$\hat{d}_{n,\CR}=0.37$ (with 95\% confidence interval [0.358, 0.385])  and $\hat{d}_{n,\MAD}=0.38$ with (95\% confidence interval [0.362, 0.397])
for Thu1300 and $\hat{d}_{n,\cl}=0.36$ (with 95\% confidence interval [0.345, 0.374]), $\hat{d}_{n,\CR}=\hat{d}_{n,\MAD}=0.38$ (with 95\% confidence intervals [0.361, 0.398] for $\CR$ and [0.357, 0.402] for $\MAD$) for
Sat1930. These values are similar to the one found by
\cite{park:park:2009}.
\begin{figure}[!h]
\begin{tabular}{cc}
\includegraphics*[width=0.45\textwidth,angle=-90]{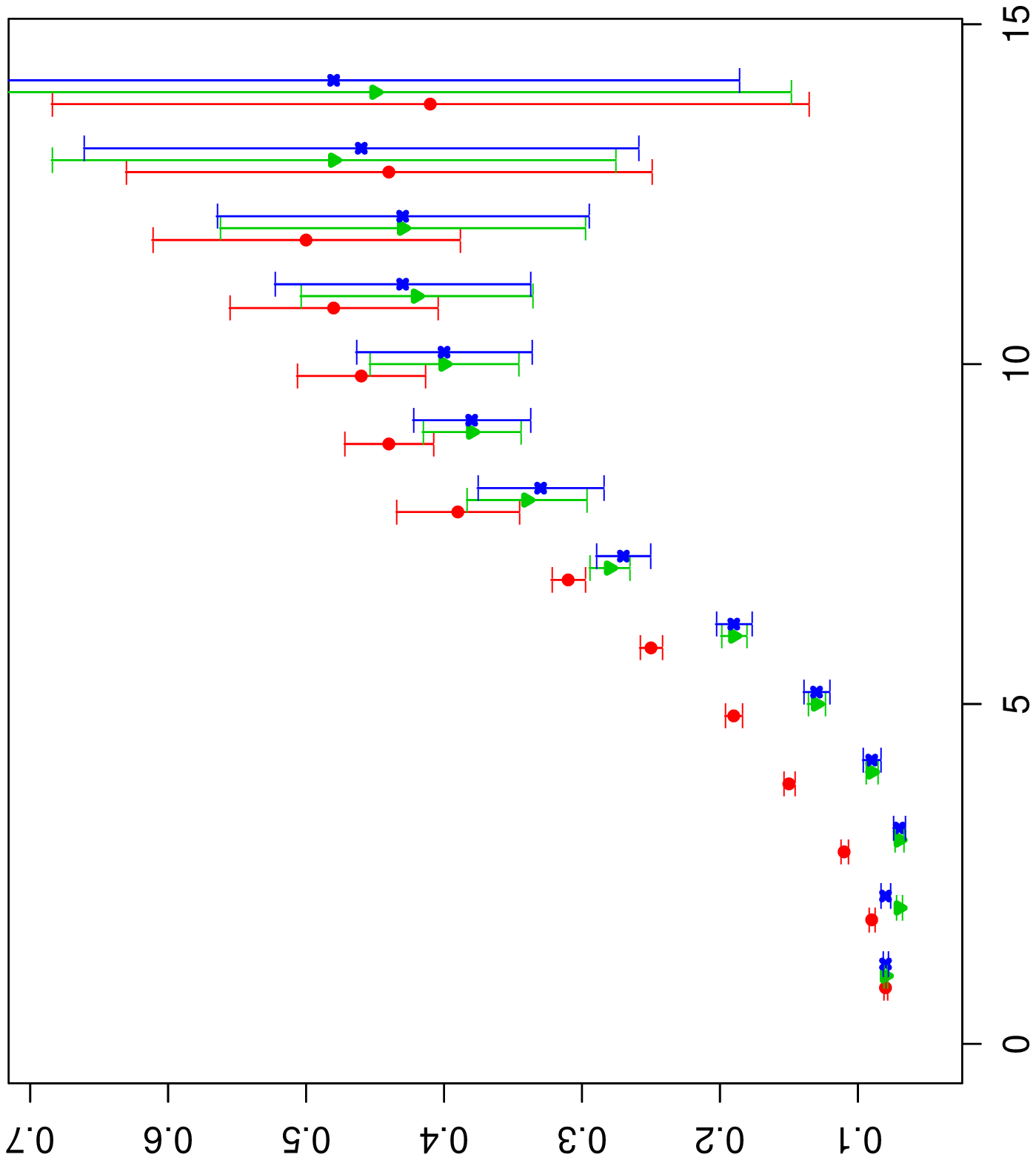}
\includegraphics*[width=0.45\textwidth,angle=-90]{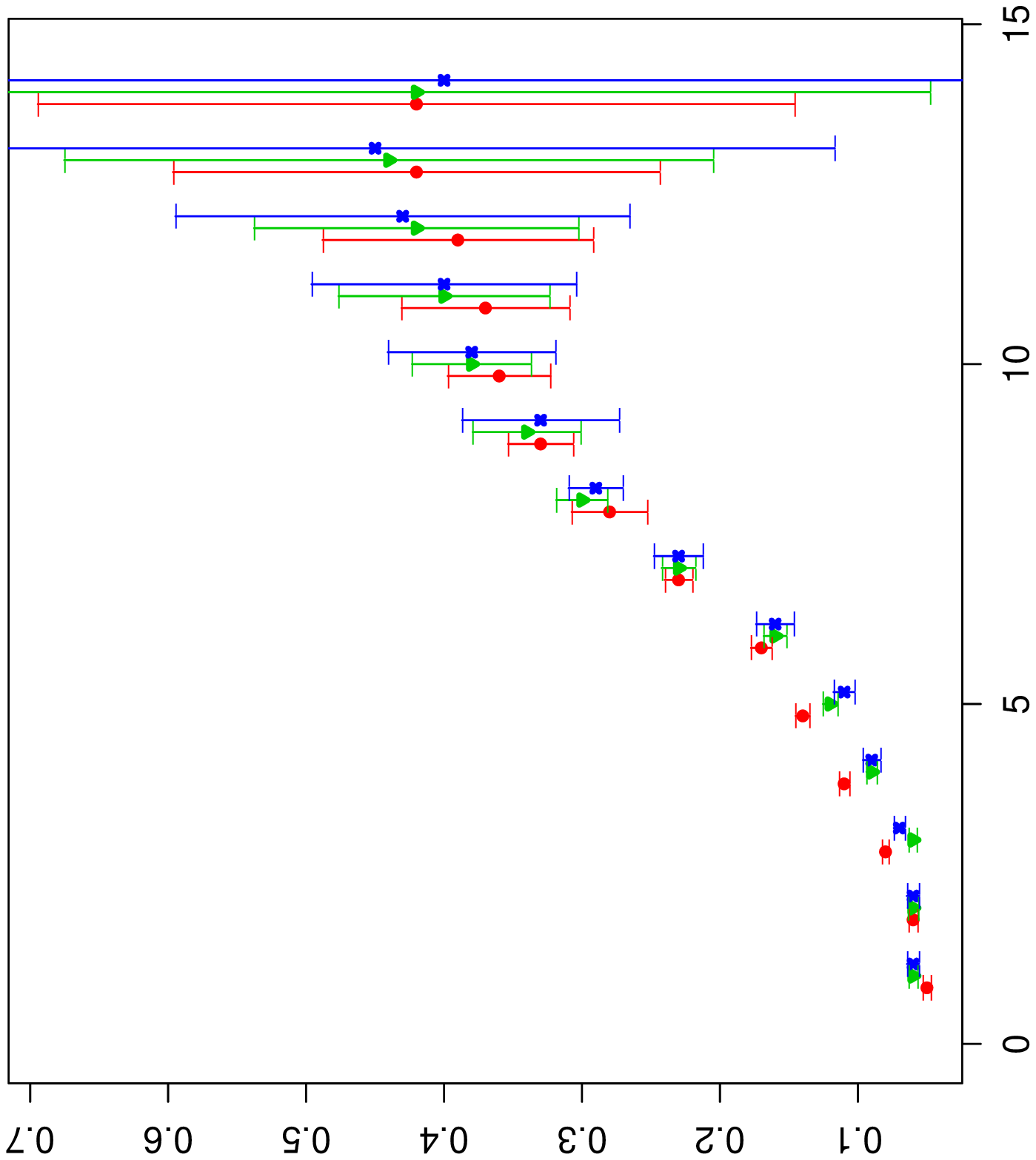}
\end{tabular}
\vspace{-1cm}
\caption{\footnotesize{Confidence intervals of the estimates
    $\hat{d}_{n,\cl}$ (red), $\hat{d}_{n,\CR}$ (green) and
    $\hat{d}_{n,\MAD}$ (blue) on the data Thu1300 (left) and Sat1930 (right) for $J_0=1,\dots,14$
and $J_0+\ell=19$.}}
\label{fig:estimddata}
\end{figure}
%

With this choice of $J_0$ for Thu1300, we observe a significant
difference between the classical estimator and the robust
estimators. Thus to better understand the influence of outliers on the
estimated memory parameter a new dataset with artificial outliers was
generated. The Thu1300 time series shows two spikes shooting
down. Especially, the first downward spike hits
zero. \cite{park:taqqu:stoev:2007} have shown that this dropout lasted
8 seconds. Outliers are introduced by dividing  by 6 the 8000 observations
in this period. The new memory parameter estimators are
$\hat{d}_{n,\cl}=0.445$, $\hat{d}_{n,\CR}=0.375$ and
$\hat{d}_{n,\MAD}=0.377$. As for the Nile River data, the classical
estimator was affected while the robust estimators remain stable.

\section{Proofs}\label{sec:proofs}
Theorem \ref{theo:ext:arcones} is an extension of \cite[Theorem~4]{arcones:1994} to
arrays of stationary Gaussian processes in the unidimensional
case and Theorem \ref{theo:ext:csorgo}
extends the result of \cite{csorgo:mielniczuk:1996}
to arrays of stationary Gaussian processes.
These two theorems are useful for the proof of Proposition~\ref{lemma:asymp:exp}.
\begin{theorem}\label{theo:ext:arcones}
Let $\{X_{j,i},\; j\geq 1,i\geq 0\}$ be an array of
standard stationary Gaussian processes such that for a fixed $j\geq
1$, $(X_{j,i})_{i\geq 0}$
has a spectral density $f_{j}$ and an autocorrelation function $\rho_{j}$
defined by $\rho_{j}(k)=\PE(X_{j,0}X_{j,k})$, for all $k\geq 0$.
Assume also that there exists a non increasing sequence $\{u_{j}\}_{j\geq 1}$ 
such that for all $j\geq1$
\begin{equation}
\label{eq:asumption:fkn}
\sup_{\lambda\in(-\pi,\pi)}|f_{j}(\lambda)-g_\infty(\lambda)|\leq u_{j}\;,
\end{equation}
where $g_\infty$ is a $2\pi$-periodic function which is bounded on $(-\pi,\pi)$ and
continuous at the origin. Let $h$ be a function on
$\Rset$ with Hermite rank $\tau\geq 1$. We assume that $h$ is either
bounded or is a finite linear combination of Hermite polynomials.
Let $\{n_j\}_{j\geq1},$ be a sequence of integers such that $n_j$ tends to infinity as $j$ tends to infinity. Then,
\begin{equation}
\label{eq:clt:ext:arcones}
\frac{1}{\sqrt{n_j}}\sum\limits_{i=1}^{n_j}h\left(X_{j,i}\right)\cd
\mathcal{N}\left(0,\tilde{\sigma}^2\right)\;,\textrm{ as } j\to\infty\; ,
\end{equation}
where
$$\tilde{\sigma}^2=\lim_{n\to\infty}\PVar\Big(\frac{1}{\sqrt{n_j}}\sum_{i=1}^{n_j}h(X_{j,i})\Big)
=\sum_{\ell\geq\tau}\frac{c^2_\ell}{\ell !}g^{\star\ell}_\infty(0).$$
In the previous equality,
$c_\ell=\PE[h(X)H_\ell(X)]$, where $H_\ell$ is the $\ell$-th Hermite
polynomial and $X$ is a standard Gaussian random variable.
\end{theorem}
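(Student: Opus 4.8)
\emph{Proof plan.} The approach is to run the Breuer--Major scheme, the whole point being to make every estimate uniform along the array $j\to\infty$ (with $n_j\to\infty$) rather than for a single stationary sequence as in \cite{arcones:1994}. First expand $h$ in Hermite polynomials, $h=\sum_{\ell\geq\tau}(c_\ell/\ell!)H_\ell$ in $\ltwo$ of the standard Gaussian measure; by the definition of the Hermite rank, $c_\ell=\PE[h(X)H_\ell(X)]=0$ for $\ell<\tau$, and since $h$ is bounded or a finite Hermite sum, $\sum_\ell c_\ell^2/\ell!=\PVar(h(X))<\infty$. Set $S_{n,j}\eqdef\sum_{i=1}^n h(X_{j,i})$, $h_{[p]}\eqdef\sum_{\ell=\tau}^p(c_\ell/\ell!)H_\ell$ and $S_{n,j}^{[p]}\eqdef\sum_{i=1}^n h_{[p]}(X_{j,i})$ for $p\geq\tau$. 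I would then prove: (a) $\PVar(n_j^{-1/2}S_{n_j,j})\to\tilde\sigma^2<\infty$ and $\sup_j\PVar\big(n_j^{-1/2}(S_{n_j,j}-S_{n_j,j}^{[p]})\big)\to0$ as $p\to\infty$; (b) for each fixed $p$, $n_j^{-1/2}S_{n_j,j}^{[p]}\cd\calN(0,\tilde\sigma_p^2)$ with $\tilde\sigma_p^2\to\tilde\sigma^2$; and conclude by the standard approximation lemma (a family of variables that is $\ltwo$--close, uniformly in $j$, to a family converging to $\calN(0,\tilde\sigma^2)$ converges itself to $\calN(0,\tilde\sigma^2)$).

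\emph{Step (a): variances.} Using the relations $\PE[H_\ell(X_{j,0})H_{\ell'}(X_{j,k})]=\delta_{\ell\ell'}\,\ell!\,\rho_j(k)^\ell$, valid for a standard stationary Gaussian sequence,
\begin{equation*}
\PVar\!\left(\frac{1}{\sqrt n}S_{n,j}\right)=\sum_{\ell\geq\tau}\frac{c_\ell^2}{\ell!}\;\frac1n\sum_{|k|<n}(n-|k|)\,\rho_j(k)^\ell\eqsp.
\end{equation*}
For each fixed $\ell$, the inner average is, up to a universal constant, the $n$-th Ces\`aro mean at the origin of the Fourier series of the $\ell$-fold periodic self-convolution $f_j^{\star\ell}$; since $f_j\geq0$ with $\int_{-\pi}^\pi f_j=1$ (whence also $g_\infty\geq0$, $\int_{-\pi}^\pi g_\infty=1$), one has $\|f_j^{\star\ell}-g_\infty^{\star\ell}\|_\infty\leq\ell\,\|f_j-g_\infty\|_\infty\leq\ell\,u_j$, and $g_\infty^{\star\ell}$ is continuous at the origin (for $\ell\geq2$ its Fourier coefficients are absolutely summable, and for $\ell=1$ this is continuity of $g_\infty$). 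Hence, by Fej\'er's theorem together with $u_j\to0$ and $n_j\to\infty$, the $\ell$-th inner average converges to $g_\infty^{\star\ell}(0)$. To interchange $\lim_j$ and $\sum_\ell$, observe that for $\ell\geq2$, $\big|\tfrac1n\sum_{|k|<n}(n-|k|)\rho_j(k)^\ell\big|\leq\sum_k\rho_j(k)^2=2\pi\!\int_{-\pi}^\pi f_j(\lambda)^2\,\d\lambda\leq2\pi\sup_j\|f_j\|_\infty$ (Parseval and $\int f_j=1$), and $\sup_j\|f_j\|_\infty\leq\|g_\infty\|_\infty+u_1<\infty$ because $\{u_j\}$ is non-increasing (a similar uniform bound holds for $\ell=1$); since $\sum_\ell c_\ell^2/\ell!<\infty$, dominated convergence gives $\PVar(n_j^{-1/2}S_{n_j,j})\to\tilde\sigma^2=\sum_{\ell\geq\tau}(c_\ell^2/\ell!)\,g_\infty^{\star\ell}(0)<\infty$. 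Applying the same identity to $h-h_{[p]}$, whose expansion involves only $\ell>p\geq\tau\geq1$ and hence $\ell\geq2$, yields $\sup_j\PVar\big(n_j^{-1/2}(S_{n_j,j}-S_{n_j,j}^{[p]})\big)\leq2\pi\,(\sup_j\|f_j\|_\infty)\sum_{\ell>p}c_\ell^2/\ell!\to0$, and letting $p\to\infty$ also shows $\tilde\sigma_p^2\to\tilde\sigma^2$.

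\emph{Step (b): CLT for the truncated sums.} Fix $p$ and use the method of cumulants (equivalently, the fourth--moment theorem). By the diagram formula for cumulants of sums of Hermite polynomials of jointly Gaussian variables, $\Cum_m(n_j^{-1/2}S_{n_j,j}^{[p]})$ is a sum over connected diagrams of terms $n_j^{-m/2}\sum_{i_1,\dots,i_m=1}^{n_j}\prod_{\mathrm{edges}(a,b)}\rho_j(i_a-i_b)$; the $m=2$ term converges to $\tilde\sigma_p^2$ by Step (a), while for $m\geq3$ the usual bookkeeping bounds each term by $n_j^{\,1-m/2}$ times a sum of convolutions of powers $|\rho_j|^{a}$ with $a\geq1$. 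Those sums are bounded \emph{uniformly in $j$}: $\sum_k|\rho_j(k)|^{q}\leq\sum_k\rho_j(k)^2\leq2\pi\sup_j\|f_j\|_\infty$ for every $q\geq2$ (using $\rho_j(0)=1$), while a lag-$1$ ($q=1$) contribution from a rank-$1$ summand occurs only when $\tau=1$ and is handled separately, since $n_j^{-1/2}\sum_i X_{j,i}$ is \emph{exactly} centered Gaussian with variance converging as in Step (a). Because $n_j^{\,1-m/2}\to0$ for $m\geq3$, every cumulant of $n_j^{-1/2}S_{n_j,j}^{[p]}$ of order $\geq3$ tends to $0$, so $n_j^{-1/2}S_{n_j,j}^{[p]}\cd\calN(0,\tilde\sigma_p^2)$. (Equivalently: the higher chaoses $n_j^{-1/2}\sum_i H_\ell(X_{j,i})$, $\ell\geq2$, are mutually orthogonal with exactly vanishing off-diagonal covariances, so by the multivariate fourth--moment criterion it suffices to check $\Cum_4(n_j^{-1/2}\sum_i H_\ell(X_{j,i}))\to0$, which again follows from the uniform bounds above.) Letting $p\to\infty$ and applying the approximation lemma with the tail estimate of Step (a) gives~\eqref{eq:clt:ext:arcones}.

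\emph{Main obstacle.} In \cite{arcones:1994} every bound on variances and higher cumulants rests on the summability $\sum_k|\rho(k)|^\tau<\infty$, which is \textbf{not} assumed here; the heart of the argument is to re-derive all these bounds from the much weaker information $\sup_j\|f_j\|_\infty<\infty$ (which comes from $\{u_j\}$ non-increasing together with $f_j\to g_\infty$ uniformly), the continuity of $g_\infty$ at the origin, and $n_j\to\infty$, and to keep every constant independent of $j$. Verifying that these inputs still force all cumulants of order $\geq3$ of the truncated sums to vanish uniformly along the array --- and noting, in passing, that the only chaos for which summability would matter, namely the rank-$1$ one, is in fact exactly Gaussian and requires no estimate at all --- is where the real work lies.
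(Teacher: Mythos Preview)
Your overall architecture (Hermite truncation, uniform variance control via $\sup_j\|f_j\|_\infty<\infty$, then cumulants/fourth moment for each fixed chaos) is a legitimate alternative to the paper's direct method of moments, and your Step~(a) is correct and cleaner than the paper's variance computation (which uses a bespoke dominated--convergence lemma, their Lemma~\ref{lemma:double:fatou}).

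The gap is in Step~(b). Your claim that a single--power factor $|\rho_j(\cdot)|^{1}$ in a connected cumulant diagram can only come from a rank--$1$ summand is false. For example, $\Cum_3\big(\sum_i H_2(X_{j,i})\big)$ is, up to a constant, $\sum_{i_1,i_2,i_3}\rho_j(i_1-i_2)\rho_j(i_2-i_3)\rho_j(i_3-i_1)$: a triangle in which every edge has multiplicity~$1$; similarly the $K_4$ diagram appears in $\Cum_4$ of $H_3$'s. These ``chain'' sums cannot be controlled by $\sum_k\rho_j(k)^2$ and Young--type convolution bounds alone, and your sentence ``those sums are bounded uniformly in $j$'' does not cover them. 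This is exactly where the paper does its real work: it writes such sums spectrally, bounds the Dirichlet kernel by $|D_{n_j}(\lambda)|\le Cn_j/(1+n_j|\lambda|)$, and invokes the finiteness of model integrals (their Lemmas~\ref{lem:conv:integrale} and~\ref{lem:conv:integrale:even}) to obtain, e.g., $\sum_{i_1,i_2,i_3}\rho_j(i_1-i_2)\rho_j(i_2-i_3)\rho_j(i_3-i_1)=O(n_j)$ uniformly in $j$; see \eqref{eq:rho:rneqs} and \eqref{eq:rho:rneq:even}. The paper also needs a separate case analysis when not all nonzero--lag correlations are below a fixed threshold $\rho^\ast<1/(2p)$ (their cases (a)/(b)), which your sketch does not address.

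If you want to stay with the cumulant route, one way to close the gap is to recognize the cycle diagrams as Toeplitz traces: the triangle equals $\trace(R_j^3)$ and the $4$--cycle equals $\trace(R_j^4)$, where $(R_j)_{ab}=\rho_j(a-b)$; then $\trace(R_j^m)\le\|R_j\|_{\mathrm{op}}^{m-2}\trace(R_j^2)\le(2\pi\|f_j\|_\infty)^{m-2}\,n_j\sum_k\rho_j(k)^2=O(n_j)$ uniformly. But not every connected diagram is a single cycle, so you still need either a reduction of general connected diagrams to trace--type expressions, or the spectral/Dirichlet estimates the paper uses. Either way, the step you labelled ``usual bookkeeping'' is where the substance lies.
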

\begin {proof}[Proof of Theorem~\ref{theo:ext:arcones}]  Let us first prove that
\begin{equation}
\label{eq:arc:hermite}
\frac{\sum_{i=1}^{n_j}\sum_{l\geq\tau}\frac{c_l}{l!}H_l(X_{j,i})}{\sqrt{\PVar\left(\sum_{i=1}^{n_j}\sum_{l\geq\tau}\frac{c_l}{l!}H_l(X_{j,i})\right)}}\cd \mathcal{N}(0,1)\;,\textrm{ as } n\to\infty\;.
\end{equation}
Using Mehler's formula, see Eq. (2.1) of \cite{breuer:major:1983}, we have
\begin{align*}
\PVar\left(\sum_{i=1}^{n_j}\sum_{l\geq\tau}\frac{c_l}{l!}H_l(X_{j,i})\right)&=\sum_{i_1,i_2=1}^{n_j}\sum_{l_1,l_2\geq\tau}\frac{c_{l_1}c_{l_2}}{l_{1}!l_{2}!}\PE\left[H_{l_1}(X_{j,i_1})H_{l_2}(X_{j,i_2})\right]\\
&=\sum_{l\geq\tau}\frac{c_l^2}{l!}\left[\sum_{i_1,i_2=1}^{n_j}\rho_j^l(i_2-i_1)\right].
\end{align*}
 In order to prove \eqref{eq:arc:hermite},
it is enough to prove that for $p\geq1$,
\begin{align}\label{eq:odd}
\frac{\PE\left[\left(\sum_{i=1}^{n_j}\sum_{l\geq\tau}\frac{c_l}{l!}H_l(X_{j,i})\right)^{2p+1}\right]}{\left(\sum_{l\geq\tau}\frac{c_l^2}{l!}\left[\sum_{i_1,i_2=1}^{n_j}\rho_j^l(i_2-i_1)\right]\right)^{\frac{2p+1}{2}}}&\to0,
\text{ as } n\to\infty \text{ and}\\ \label{eq:even}
\frac{\PE\left[\left(\sum_{i=1}^{n_j}\sum_{l\geq\tau}\frac{c_l}{l!}H_l(X_{j,i})\right)^{2p}\right]}{\left(\sum_{l\geq\tau}\frac{c_l^2}{l!}\left[\sum_{i_1,i_2=1}^{n_j}\rho_j^l(i_2-i_1)\right]\right)^{p}}&\to\frac{(2p)!}{p!\,2^p},\text{
  as } n\to\infty.
\end{align}
For all $m\in\Nset^\ast,$
\begin{align*}
\PE\left[\left(\sum_{i=1}^{n_j}\sum_{l\geq\tau}\frac{c_l}{l!}H_l(X_{j,i})\right)^{m}\right]=\sum_{1\leq i_1,\dots,i_m\leq n_j}\sum_{l_1,\dots,l_m\geq \tau}\frac{c_{l_1}\dots c_{l_m}}{l_1!\dots l_m!}\PE\left[H_{l_1}(X_{j,i_1}),\dots,H_{l_m}(X_{j,i_m})\right]\,.
\end{align*}

\noindent
1) We start with the case where $m=2p+1.$

a) Let us first assume that
$|\{i_1,\dots,i_{2p+1}\}|=2p+1$ and that
\begin{equation}\label{eq:cond_rho}
\forall i,\; \rho_j(i)\leq\rho^\ast<1/(2p)\; .
\end{equation}
By \cite[Lemma~3.2 P. 210]{taqqu:1977},
$\PE\left[H_{l_1}(X_{j,i_1}),\dots,H_{l_m}(X_{j,i_m})\right]$ is zero
if $l_1+\dots+l_m$ is odd. Otherwise it is bounded by a constant times
a sum of products of  $(l_1+\dots+l_m)/2$ correlations. Bounding, in
each product, all of them but $p+1$, by $\rho^\ast<1/(2p)$, we get
that
$\PE\left[H_{l_1}(X_{j,i_1}),\dots,H_{l_{2p+1}}(X_{j,i_{2p+1}})\right]$
is bounded by a finite number of terms of the following form
$$
(\rho^\ast)^{\frac{l_1+\dots+l_{2p+1}}{2}-(p+1)}
\rho_j(i_2-i_1)\rho_j(i_4-i_3)\dots \rho_j(i_{2p}-i_{2p-1})\rho_j(i_{2p+1}-i_{2p})
\left|\PE\left(H_{l_1}(X)\dots H_{l_{2p+1}}(X)\right)\right|\; ,
$$
where $X$ is a standard Gaussian random variable.
Note also that the hypercontractivity \cite[Lemma~3.1 P.210]{taqqu:1977} yields
$$\left|\PE\left[H_{l_1}(X)\dots H_{l_{2p+1}}(X)\right]\right|\leq
(2p)^{\frac{l_1+\dots+l_{2p+1}}{2}}\sqrt{l_1!\dots l_{2p+1}!}\;.$$
Thus, using the Cauchy-Schwarz inequality and that $\rho^\ast<\frac{1}{2p}$, there exists a positive
constant $C$ such that
\begin{align*}
&\sum_{l_1,\dots,l_{2p+1}\geq \tau}\frac{|c_{l_1}\dots c_{l_{2p+1}}|}{l_1!\dots l_{2p+1}!}(\rho^\ast)^{\frac{l_1+\dots+l_{2p+1}}{2}-(p+1)} \left|\PE\left(H_{l_1}(X)\dots H_{l_{2p+1}}(X)\right)\right|\\
&\leq \sum_{l_1,\dots,l_{2p+1}\geq \tau}\frac{|c_{l_1}|\dots |c_{l_{2p+1}}|}{\sqrt{l_1!\dots l_{2p+1}!}}(2p\rho^\ast)^{\frac{l_1+\dots+l_{2p+1}}{2}-(p+1)}
\leq (2p\rho^\ast)^{-1}\left(\sum_{l\geq\tau}\frac{|c_l|}{\sqrt{l!}}\left[(2p\rho^\ast)\right]^{\frac{l}{2}-\frac{p}{2p+1}}\right)^{2p+1}\\
&\leq C\left(\sum_{l\geq\tau}\frac{c_l^2}{l!}\right)^{\frac{2p+1}{2}}\left(\sum_{l\geq\tau}(2p\rho^\ast)^{l-\frac{2p}{2p+1}}\right)^\frac{2p+1}{2}<\infty\;.
\end{align*}
To conclude the proof of \eqref{eq:odd}, it remains to prove that
\begin{equation}\label{eq:clt:odd}
\frac{\sum\limits_{\substack{1\leq i_1,\dots, i_{2p+1}\leq n_j\\
      |\{i_1,\dots,i_{2p+1}\}|=2p+1}}\rho_j(i_2-i_1)\rho_j(i_4-i_3)\dots \rho_j(i_{2p}-i_{2p-1})\rho_j(i_{2p+1}-i_{2p})}{\left(\sum\limits_{l\geq\tau}\frac{c_l^2}{l!}\left[\sum\limits_{i_1,i_2=1}^{n_j}\rho_j^l(i_2-i_1)\right]\right)^{p+\frac{1}{2}}}\to 0,\text{ as }n_j\to \infty\;.
\end{equation}
Let us first study the numerator in the l.h.s of \eqref{eq:clt:odd}.
\begin{multline*}
\sum\limits_{\substack{1\leq i_1,\dots i_{2p+1}\leq n_j\\ |\{i_1,\dots,i_{2p+1}\}|=2p+1}}\rho_j(i_2-i_1)\rho_j(i_4-i_3)\dots \rho_j(i_{2p}-i_{2p-1})\rho_j(i_{2p+1}-i_{2p})\\
=\big(\sum_{1\leq i_1\neq i_2\leq n_j}\rho_j(i_2-i_1)\big)^{p-1}\sum_{\substack{1\leq i_{2p-1},i_{2p},i_{2p+1}\leq n_j\\ |\{i_{2p-1},i_{2p},i_{2p+1}\}|=3}}\rho_j(i_{2p}-i_{2p-1})\rho_j(i_{2p+1}-i_{2p})\\
=\big(\sum_{1\leq i_1\neq i_2\leq n_j}\rho_j(i_2-i_1)\big)^{p-1}\sum_{i_{2p}=1}^{n_j}\big(\sum_{1\leq i_{2p}\neq i_{2p+1}\leq n_j}\rho_j(i_{2p+1}-i_{2p})\big)^2\;.
\end{multline*}
To prove \eqref{eq:clt:odd}, we start by proving that
\begin{equation}\label{eq:rho:rneqs}
\sum_{r=1}^{n_j}\Big(\sum_{{1\leq s\leq n_j}}\rho_j(r-s)\Big)^2=O(n_j).
\end{equation}
Using the notation $D_{n_j}(\lambda)=\sum_{r=1}^{n_j}\rme^{\rmi\lambda r}$, we get
\begin{align*}
\sum_{r=1}^{n_j}\Big(\sum_{{1\leq s\leq n_j}}\rho_j(r-s)\Big)^2&=\sum_{r=1}^{n_j}\Big(\int_{-\pi}^{\pi}\rme^{\rmi\lambda r}\sum_{{1\leq s\leq n_j}}\rme^{-\rmi\lambda s}f_j(\lambda)\d\lambda\Big)^2\\
&=\int_{-\pi}^{\pi}\int_{-\pi}^{\pi}D_{n_j}(\lambda-\lambda')D_{n_j}(\lambda)\overline{D_{n_j}(\lambda')}
f_j(\lambda)f_j(\lambda')\d\lambda\d\lambda'\;.
\end{align*}
Using \eqref{eq:asumption:fkn}, the boundedness of $g_\infty$ and that
$u_j$ is bounded, there exists a positive constant $C$ such that
\begin{multline*}
|f_j(\lambda)f_j(\lambda')|\leq |f_j(\lambda)-g_\infty(\lambda)| |f_j(\lambda')-g_\infty(\lambda')|+|g_\infty(\lambda')| |f_j(\lambda)-g_\infty(\lambda)|\\
+|g_\infty(\lambda)| |f_j(\lambda')-g_\infty(\lambda')|+ |g_\infty(\lambda)||g_\infty(\lambda')|
\leq C\;.
\end{multline*}
Then, using that there exists a positive constant $c$ such that
$|D_{n_j}(\lambda)|\leq {c n_j}/(1+n_j|\lambda|)$, for all $\lambda$
in $[-\pi,\pi]$,
\begin{align}\label{eq:integral:dn}
\sum_{r=1}^{n_j}\Big(\sum_{{1\leq s\leq n_j}}\rho_j(r-s)\Big)^2\leq
c^3n_j\int_{\Rset^2}\frac{1}{1+|\mu-\mu'|}\frac{1}{1+|\mu|}\frac{1}{1+|\mu'|}\d\mu\d\mu'\; .
\end{align}
The result \eqref{eq:rho:rneqs} thus follows from the convergence of
the integral in \eqref{eq:integral:dn} which is proved in Lemma
\ref{lem:conv:integrale}.
Let us now prove that
\begin{equation}\label{eq:lim:rho}
\frac{1}{n_j}\sum_{1\leq r, s\leq n_j}\rho_j(r-s)\to g_{\infty}(0)\;,
\textrm{ as } n\to\infty\; .
\end{equation}
Using that $F_j$ defined by $F_j(\lambda)=(2\pi
n_j)^{-1}|\sum\limits_{r=1}^{n_j}\rme^{\rmi\lambda r}|^2$,
for all $\lambda$ in $[-\pi,\pi]$ satisfies $\int_{-\pi}^{\pi}F_j(\lambda)\d\lambda=1,$ we obtain
\begin{multline}\label{eq:dec:rho}
\frac{1}{n_j}\Big(\sum_{1\leq r, s\leq
  n_j}\rho_j(r-s)\Big)-g_\infty(0)=\int_{-\pi}^{\pi}\left(f_j(\lambda)-g_\infty(\lambda)\right)F_j(\lambda)\d\lambda
+\int_{-\pi}^{\pi}\left(g_\infty(\lambda)-g_\infty(0)\right)F_j(\lambda)\d\lambda\,.
\end{multline}
Using that  $\int_{-\pi}^{\pi}F_j(\lambda)\d\lambda=1$ and
(\ref{eq:asumption:fkn}), the first term in the r.h.s of \eqref{eq:dec:rho} tends to zero as $n$
tends to infinity.
The second term in the r.h.s of \eqref{eq:dec:rho} can be upper
bounded as follows. For $0<\eta\leq\pi$,
\begin{multline}\label{eq:dec:terme2}
\left|\int_{-\pi}^{\pi}\left(g_\infty(\lambda)-g_\infty(0)\right)F_j(\lambda)\d\lambda\right|\leq \int_{-\pi}^{-\eta}|g_\infty(\lambda)-g_\infty(0)|F_j(\lambda)\d\lambda\\+\int_{-\eta}^{\eta}|g_\infty(\lambda)-g_\infty(0)|F_j(\lambda)\d\lambda+\int_{\eta}^{\pi}|g_\infty(\lambda)-g_\infty(0)|F_j(\lambda)\d\lambda\,.
\end{multline}
Since there exists a positive constant $C$ such that $F_j(\lambda)\leq C/(n_j|\lambda|^2)$,
for all $\lambda$ in $[-\pi,\pi]$, the first and last terms in the
r.h.s of
\eqref{eq:dec:terme2} are bounded by $C\pi/(n_j \eta^2)$. The continuity of $g_\infty$ at $0$
and the fact that
$\int_{-\eta}^{\eta}F_j(\lambda)\d\lambda\leq\int_{-\pi}^{\pi}F_j(\lambda)\d\lambda=1$
ensure that the second term in the r.h.s of \eqref{eq:dec:terme2} tends to zero as $n$
tends to infinity. This concludes the proof of \eqref{eq:lim:rho}.

Using the same arguments as those used to prove \eqref{eq:lim:rho}
and the fact that $\rho^l_j$ is  the autocorrelation associated to
$f_j^{\star l}$ which is the $l$-th self-convolution of $f_j$, we get
that
\begin{equation}\label{eq:rho:conv}
\frac{1}{n_j}\sum_{r,s=1}^{n_j}\rho^l_j(r-s)\to g^{\star l}_\infty(0), \text{ as } n\to \infty\, .
\end{equation}

Let us now prove that the denominator in \eqref{eq:clt:odd} is
$O(n_j^{p+\frac{1}{2}})$ as $n\to\infty.$
We aim at applying Lemma~\ref{lemma:double:fatou} with $f_n,$ $g_n$,
$f$ and $g$ defined hereafter.
$$f_{n_j}(s,l)=\frac{c_l^2}{l!}\1_{\{|s|<n_j\}}\Big(1-\frac{|s|}{n_j}\Big)\rho_j^l(s).$$
Observe that $|f_{n_j}(s,l)|\leq g_{n_j}(s,l)$ where
$$g_{n_j}(s,l)=\frac{c_l^2}{l!}\1_{\{|s|<n_j\}}\Big(1-\frac{|s|}{n_j}\Big)\rho_j^2(s).$$
Using \eqref{eq:asumption:fkn} and the fact that the spectral density
associated to $\rho_j^l$ is $f_j^{\star l}$,
we get, as $n\to \infty$,
$$
f_{n_j}(s,l)\to f(s,l)=\frac{c_l^2}{l!}\int_{-\pi}^{\pi}g_\infty^{\star l}(\lambda)\rme^{\rmi\lambda s}\d\lambda \textrm{ and } g_{n_j}(s,l)\to g(s,l)=\frac{c_l^2}{l!}\int_{-\pi}^{\pi}g_\infty^{\star 2}(\lambda)\rme^{\rmi\lambda s}\d\lambda\;.
$$
Using \cite[Lemma 1]{moulines:roueff:taqqu:2007:jtsa}, we get
$$
\sum_{l\geq\tau}\sum_{s\in\Zset}g_{n_j}(s,l)\to \sum_{l\geq\tau}\frac{c_l^2}{l!}g_\infty^{\star 2}(0)\;.
$$
Then, Lemma~\ref{lemma:double:fatou} yields
$$\lim_{n\to\infty}\frac{1}{n_j}\PVar\Big(\sum_{i=1}^{n_j}\sum_{l\geq\tau}\frac{c_l}{l!}H_l(X_{j,i})\Big)
=\lim_{n\to\infty}\frac{1}{n_j}\sum_{l\geq\tau}\frac{c_l^2}{l!}\Big[\sum_{i_1,i_2=1}^{n_j}\rho_j^l(i_2-i_1)\Big]
=\sum_{l\geq\tau}\frac{c_l^2}{l!}g_\infty^{\star l}(0)\;.$$
Hence we get \eqref{eq:clt:odd} by noticing that the numerator in
\eqref{eq:clt:odd} is $O(n^p_j)$.

If Condition (\ref{eq:cond_rho}) is not satisfied then let $k_0$ be
such that $\rho_j(k)\leq\rho^\ast<1/(2p)$, for all $k>k_0$. In the
case where $h$ is a linear combination of $L$ Hermite polynomials,
the same arguments as those used previously are valid with
$\rho^\ast=1$.
In the case where $h$ is bounded, there exists a positive constant $C$
such that
\begin{equation}\label{eq:maj:h_bound}
\PE\bigg[\bigg(\sum_{i=1}^{n_j}h(X_{j,i})\bigg)^{2p+1}\bigg]
\leq C \sum_{1\leq i_1,\dots,i_q\leq n_j}
\PE\big[|h|(X_{j,i_1})\dots |h|(X_{j,i_q})\big]\;,
\end{equation}
where $i_1,\dots,i_q$ are such that $|i_k-i_l|>k_0$, for all
$k,l$ in $\{1,\dots,q\}$ with $q\leq 2p+1$. By expanding $|h|$
onto the basis of Hermite polynomials, we can conclude with the
same arguments as those used when Condition (\ref{eq:cond_rho})
is valid.

b) Let us now assume that $|\{i_1,\dots,i_{2p+1}\}|=r\leq 2p$.
In the case where $h$ is bounded, the inequality
(\ref{eq:maj:h_bound}) is valid with $q\leq r$ which gives
that the numerator of (\ref{eq:odd}) is $O(n_j^{\pent{r/2}}).$
In the
case where $h$ is a linear combination of $L$ Hermite polynomials,
we use the same arguments as those used in a)
with
$\rho^\ast=1$ which implies that
the numerator of (\ref{eq:odd}) is $O(n_j^{\pent{r/2}}).$

2) Let us now study the case where $m$ is even that is $m=2p$ with $p\geq1.$
\begin{align}\label{eq:prod_hermite_even}
\PE\bigg[\Big(\sum_{i=1}^{n_j}\sum_{l\geq\tau}\frac{c_l}{l!}H_l(X_{j,i})\Big)^{2p}\bigg]
=\sum_{1\leq i_{1},\dots,i_{2p}\leq n_j}\sum_{ l_1,\dots,l_{2p}\geq
  \tau}
\frac{c_{l_1}\dots c_{l_{2p}}}{l_1!\dots
  l_{2p}!}\PE\left[H_{l_1}(X_{j,i_1})\dots H_{l_{2p}}(X_{j,i_{2p}})\right]\,.
\end{align}
By \cite[Formula~(33), P.69]{rosenblatt:1985}, we have
\begin{align}\label{eq:rosenblatt}
\PE\left[H_{l_1}(X_{j,i_1})\dots
  H_{l_{2p}}(X_{j,i_{2p}})\right]=l_1!\dots l_{2p}!\sum_{\{l_1,\dots,l_{2p}\}}\frac{\rho_j^\nu}{\nu!}\,,
\end{align}
where it is understood that $\rho_j^\nu=\prod\limits_{1\leq q<k\leq
  2p}\rho_j^{\nu_{q,k}}(q-k)$,
$\nu!=\prod\limits_{1\leq q<k\leq 2p}\nu_{q,k}!$,
and $\sum_{\{l_1\dots,l_{2p}\}}$ indicates that we are to sum
over all symmetric matrices $\nu$ with nonnegative integer
entries, $\nu_{ii}=0$ and the row sums equal to $l_1,\dots,l_{2p}.$

We shall prove that
among all the terms in the r.h.s of \eqref{eq:rosenblatt},
the leading ones correspond to the case where we have $p$
pairs of equal indices in the set $\{l_1,\dots,l_{2p}\}$, that
is, for instance,
$l_1=l_2,\,l_3=l_4,\dots,l_{2p-1}=l_{2p}$ and
$\nu_{1,2}=l_1$, $\nu_{3,4}=l_3$,...,$\nu_{2p-1,2p}=l_{2p-1}$
the others $\nu_{i,j}$ being equal to zero. This gives
\begin{align*}
(l_2!)^2\dots(l_{2p}!)^2
\frac{\rho_j(i_2-i_1)^{l_2}\rho_j(i_4-i_3)^{l_4}\dots
  \rho_j(i_{2p}-i_{2p-1})^{l_{2p}}}{l_2!\dots l_{2p}!}\; .
\end{align*}
The corresponding term in \eqref{eq:prod_hermite_even} is given by
\begin{multline*}
\sum_{1\leq i_{1},\dots,i_{2p}\leq n_j}\sum_{ l_2,l_4,\dots,l_{2p}\geq
  \tau}
\frac{c^2_{l_2} c^2_{l_4}\dots c^2_{l_{2p}}}{l_2! l_4!\dots l_{2p}!}
\rho_j(i_2-i_1)^{l_2}\rho_j(i_4-i_3)^{l_4}\dots\rho_j(i_{2p}-i_{2p-1})^{l_{2p}}\\
=\Big[\sum_{l\geq\tau}\frac{c^2_l}{l!}\Big(\sum_{i_1,i_2=1}^{n_j}\rho^l_k(i_2-i_1)\Big)\Big]^p\;,
\end{multline*}
which corresponds to the denominator in the l.h.s of
\eqref{eq:even}. Since there exists exactly
${(2p)!}/(2^pp!)$ possibilities to have pairs
of equal indices among $2p$ indices we obtain (\ref{eq:even}) if we
prove that the other terms can be neglected.

Let us first consider the case where
\begin{equation}\label{eq:cond_rho_pair}
\forall i, \;\rho_j(i)\leq\rho^\ast<\frac{1}{2p-1}
\end{equation}
and $|\{i_{1},\dots,i_{2p}\}|=2p$.
By \cite[Lemma~3.2 P. 210]{taqqu:1977},
$\PE\left[H_{l_1}(X_{j,i_1}),\dots,H_{l_m}(X_{j,i_m})\right]$ is zero
if $l_1+\dots+l_m$ is odd. Otherwise it is bounded by a constant times
a sum of products of  $(l_1+\dots+l_m)/2$ correlations. Bounding, in
each product, all of them but $p+1$, by $\rho^\ast<1/(2p-1)$, we get
that
$\PE\left[H_{l_1}(X_{j,i_1}),\dots,H_{l_{2p}}(X_{j,i_{2p}})\right]$
is bounded by a finite number of terms of the following form
$$
(\rho^\ast)^{\frac{l_1+\dots+l_{2p}}{2}-(p+1)}
\rho_j(i_2-i_1)\rho_j(i_4-i_3)\dots \rho_j(i_{2p}-i_{2p-1})\rho_j(i_{2p}-i_{1})
\left|\PE\left(H_{l_1}(X)\dots H_{l_{2p}}(X)\right)\right|\; .
$$
where $X$ is a standard Gaussian random variable.
Using the same arguments as in the case where $m$ was odd, we have
\begin{align*}
\sum_{l_1,\dots,l_{2p}\geq \tau}\frac{|c_{l_1}\dots c_{l_{2p}}|}{l_1!\dots l_{2p}!}(\rho^\ast)^{\frac{l_1+\dots+l_{2p}}{2}-(p+1)}\left|\PE\left(H_{l_1}(X)\dots H_{l_{2p}}(X)\right)\right|<\infty\,.
\end{align*}
To have the result \eqref{eq:even}, it remains to show that
\begin{align}\label{eq:even1}
\frac{\sum_{\substack{1\leq i_1,\dots, i_{2p}\leq n_j\\
      |\{i_1,\dots,i_{2p}\}|=2p}}\rho_j(i_2-i_1)\rho_j(i_4-i_3)\dots
  \rho_j(i_{2p}-i_{2p-1})\rho_j(i_{2p}-i_{1})}{\left[\sum_{l\geq\tau}\frac{c^2_l}{l!}\left(\sum_{i_1,i_2=1}^{n_j}\rho^l_j(i_2-i_1)\right)\right]^p}\to0\;, \textrm{ as } n\to \infty\;.
\end{align}
The numerator of \eqref{eq:even1} can be rewritten as
\begin{multline*}
\sum_{\substack{1\leq i_1,\dots, i_{2p}\leq n_j\\
    |\{i_1,\dots,i_{2p}\}|=2p}}
\rho_j(i_2-i_1)\rho_j(i_4-i_3)\dots \rho_j(i_{2p}-i_{2p-1})\rho_j(i_{2p}-i_{1})\\
=\Big(\sum_{1\leq i_3\neq i_4\leq n_j}\rho_j(i_4-i_3)\Big)^{p-2}\bigg[\sum_{\substack{1\leq i_1,i_2,i_{2p-1}, i_{2p}\leq n_j\\ |\{i_1,i_2,i_{2p-1},i_{2p}\}|=4}}\rho_j(i_2-i_1)\rho_j(i_{2p}-i_{2p-1})\rho_j(i_{2p}-i_1)\bigg]\,.
\end{multline*}
Using \eqref{eq:lim:rho}, we have $\left(\sum_{1\leq i_3\neq i_4\leq
    n_j}\rho_j(i_4-i_3)\right)^{p-2}=O(n^{p-2}_j).$
Let us now prove that
\begin{align}\label{eq:rho:rneq:even}
\sum_{1\leq i_1,i_2,i_{3}, i_{4}\leq n_j
}\rho_j(i_2-i_1)\rho_j(i_{3}-i_{4})\rho_j(i_{3}-i_1)=O(n_j)\; .
\end{align}
Using the notation $D_{n_j}(\lambda)=\sum_{r=1}^{n_j}\rme^{\rmi \lambda r}$,
\begin{multline*}
\sum_{1\leq i_1,i_2,i_{3}, i_{4}\leq n_j }\rho_j(i_2-i_1)\rho_j(i_{3}-i_{4})\rho_j(i_{3}-i_1)\\
=\sum_{1\leq i_1,i_2,i_{3}, i_{4}\leq n_j}\Big(\int_{-\pi}^{\pi}\rme^{\rmi\lambda(i_2-i_1)}f_j(\lambda)\d\lambda\Big) \Big(\int_{-\pi}^{\pi}\rme^{\rmi\mu(i_3-i_4)}f_j(\mu)\d\mu\Big)\Big(\int_{-\pi}^{\pi}\rme^{\rmi\xi(i_3-i_1)}f_j(\xi)\d\xi\Big)\;\\
=\int_{-\pi}^{\pi}f_j(\xi)\bigg(\int_{-\pi}^{\pi}\overline{D_{n_j}(\mu)}D_{n_j}(\mu+\xi)f_j(\mu)\d\mu \int_{-\pi}^{\pi}D_{n_j}(\lambda)\overline{D_{n_j}(\lambda+\xi)}f_j(\lambda)\d\lambda\bigg)\d\xi\\
\leq\int_{-\pi}^{\pi}\bigg(\int_{-\pi}^{\pi}|D_{n_j}(\lambda)||D_{n_j}(\lambda+\xi)|f_j(\lambda)
\d\lambda\bigg)^2f_j(\xi)\d\xi\;.
\end{multline*}
Using (\ref{eq:asumption:fkn}) and that $g_\infty$ is bounded,
\eqref{eq:rho:rneq:even} will follow if we prove that
$\int_{-\pi}^{\pi}(\int_{-\pi}^{\pi}|D_{n_j}(\lambda)||D_{n_j}(\lambda+\xi)|
\d\lambda)^2\d\xi =O(n_j)\;.$
Since there exists a positive constant $c$ such that
$|D_{n_j}(\lambda)|\leq {c n_j}/(1+n_j|\lambda|)$, for all $\lambda$
in $[-\pi,\pi]$,
\begin{multline}\label{eq:integrale:even}
\int_{-\pi}^{\pi}\bigg(\int_{-\pi}^{\pi}|D_{n_j}(\lambda)||D_{n_j}(\lambda+\xi)|f_j(\lambda)
\d\lambda\bigg)^2f_j(\xi)\d\xi
\leq c^4n_j\int_{-\infty}^{\infty}\bigg(\int_{-\infty}^{\infty}\frac{1}{1+|\mu|}\frac{1}{|1+\mu+\mu'|}\d\mu\bigg)^2\d\mu'
\end{multline}
The result \eqref{eq:rho:rneq:even} thus follows from the convergence of
the last integral in \eqref{eq:integrale:even} which is proved in Lemma
\ref{lem:conv:integrale:even}. Hence we get (\ref{eq:even1}) since the
numerator of the l.h.s of (\ref{eq:even1}) is $O(n_j^{p-1})$ and the
denominator is $O(n_j^p)$ by the same arguments as those used to
find the order of the denominator of (\ref{eq:clt:odd}).
If Condition \eqref{eq:cond_rho_pair} is not satisfied or if
$|\{i_1,\dots,i_{2p}\}|<2p$, we can use similar arguments as those
used in 1)a) and 1)b) to conclude the proof.
\end{proof}
\begin{theorem}\label{theo:ext:csorgo}
Let $\{X_{j,i},\; j\geq 1,i\geq 0\}$ be an array of
standard stationary Gaussian processes such that for a fixed $j\geq
1$, $(X_{j,i})_{i\geq 0}$
has a spectral density $f_{j}$ and an autocorrelation function $\rho_{j}$
defined by $\rho_{j}(k)=\PE(X_{j,0}X_{j,k})$, for all $k\geq 0$.
Let $F_j$ be the c.d.f of $X_{j,1}$ and $F_{n_j}$ the empirical c.d.f computed from
$X_{j,1},\dots,X_{j,n_j}$.
If Condition (\ref{eq:asumption:fkn}) holds,
\begin{equation}
\label{eq:ext:csorgo}
\sqrt{n_j}(F_{n_j}-F_j)\cd W\quad\text{in}\quad D([-\infty,\infty])\,,
\end{equation}
where $W$ is a Gaussian process and $D([-\infty,\infty])$ denotes the Skorokhod space on $[-\infty,\infty]$.
\end{theorem}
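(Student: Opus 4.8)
The plan is to follow the classical two--step route for empirical processes. Writing the array empirical process as
\[
G_{n_j}(x)\eqdef\sqrt{n_j}\bigl(F_{n_j}(x)-F_j(x)\bigr)=\frac{1}{\sqrt{n_j}}\sum_{i=1}^{n_j}\bigl(\1_{\{X_{j,i}\leq x\}}-F_j(x)\bigr),\qquad x\in[-\infty,\infty],
\]
I would first prove convergence of its finite--dimensional distributions and then establish tightness in the Skorokhod space $D([-\infty,\infty])$. Since each $X_{j,i}$ is standard Gaussian, $F_j=\Phi$ for every $j$, so the only $j$--dependence to be tracked is that of the correlation structure $\rho_j$ (equivalently $f_j$).

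\textbf{Finite--dimensional distributions.} Fix $x_1<\dots<x_m$ and $a_1,\dots,a_m\in\Rset$ and set $h(u)=\sum_{k=1}^m a_k(\1_{\{u\leq x_k\}}-\Phi(x_k))$. This $h$ is bounded with Hermite rank $\tau\geq1$ (the centered indicator has Hermite coefficient $c_1(x)=-\varphi(x)\neq0$), so Theorem~\ref{theo:ext:arcones} applies directly to $\sum_k a_k G_{n_j}(x_k)=n_j^{-1/2}\sum_i h(X_{j,i})$ and yields a centered Gaussian limit of variance $\sum_{l\geq\tau}\tfrac{c_l^2}{l!}g_\infty^{\star l}(0)$. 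By the Cram\'er--Wold device this gives joint Gaussian convergence of $(G_{n_j}(x_1),\dots,G_{n_j}(x_m))$, and reading off the bilinear form identifies the limiting covariance
\[
\Gamma(x,y)=\sum_{l\geq1}\frac{c_l(x)\,c_l(y)}{l!}\,g_\infty^{\star l}(0),\qquad c_l(x)=\PE\bigl[(\1_{\{Z\leq x\}}-\Phi(x))H_l(Z)\bigr].
\]
This is a genuine covariance function: $g_\infty^{\star l}(0)$ is bounded uniformly in $l$ by Young's inequality (using $g_\infty\ge0$ bounded and $\int_{-\pi}^\pi g_\infty<\infty$), while $\sum_l c_l(x)^2/l!=\Phi(x)(1-\Phi(x))<\infty$; it determines the law of the limiting Gaussian process $W$.

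\textbf{Tightness.} It suffices to produce a Billingsley-type moment bound for increments, with a constant $C$ \emph{independent of $j$}, of the form
\[
\PE\bigl[(G_{n_j}(y)-G_{n_j}(x))^2(G_{n_j}(z)-G_{n_j}(y))^2\bigr]\leq C\,\bigl(F_j(z)-F_j(x)\bigr)^{2},\qquad x\leq y\leq z,
\]
together with the elementary tail estimates $\PE[(G_{n_j}(\infty)-G_{n_j}(x))^2]=\PVar(G_{n_j}(x))\leq C(1-\Phi(x))$ and its mirror at $-\infty$. I would obtain the increment bound by expanding $\1_{\{u\leq y\}}-\1_{\{u\leq x\}}$ in Hermite polynomials, substituting, and reducing $\PE[\cdots]$ to finite sums of products of four correlations $\rho_j(i_a-i_b)$, exactly as in the proof of Theorem~\ref{theo:ext:arcones}; moving to the spectral domain and using $|f_j|\leq u_1+\sup_{(-\pi,\pi)}|g_\infty|$ (valid since $\{u_j\}$ is non-increasing) together with the Dirichlet--kernel bound $|D_{n_j}(\lambda)|\leq cn_j/(1+n_j|\lambda|)$ produces, after normalization, an $O(1)$ estimate with constants depending only on $\sup_{(-\pi,\pi)}|g_\infty|$ and $u_1$. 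The factor $F_j(z)-F_j(x)=\Phi(z)-\Phi(x)$ comes from bounding the Hermite coefficients $c_l(y)-c_l(x)$ and from $\|\1_{\{\cdot\leq y\}}-\1_{\{\cdot\leq x\}}\|_{L^2(\gamma)}^2=\Phi(y)-\Phi(x)$, uniformly in $j$. Combining finite--dimensional convergence with tightness yields weak convergence in $D([-\infty,\infty])$ to the Gaussian process $W$ of covariance $\Gamma$.

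\textbf{Main obstacle.} The delicate point is the uniformity in $j$ of the tightness bound: the limit is taken along the diagonal $n_j\to\infty$ as $j\to\infty$ while the correlation structure $\rho_j$ varies, so one cannot simply invoke the fixed--sequence result of \cite{csorgo:mielniczuk:1996}, and the combinatorial bookkeeping of the fourth--order product moments must be redone, checking at each step that every constant coming out of the spectral estimates depends only on $\sup_{(-\pi,\pi)}|g_\infty|$ and on $u_1$. The Hermite-rank-one component $-\varphi(\cdot)H_1(X_{j,i})$ needs a slightly separate treatment, since its contribution to the variance is only $O(n_j)$ through $\sum_{i_1,i_2}\rho_j(i_2-i_1)\sim n_j g_\infty(0)$ rather than via an absolutely summable correlation series; but this is exactly the situation already handled in Theorem~\ref{theo:ext:arcones} (splitting off the correlations beyond some $k_0$ where $\rho_j\leq\rho^\ast<1$), so the same device carries over.
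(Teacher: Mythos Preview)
Your proposal is correct and follows essentially the same route as the paper: finite-dimensional convergence via Theorem~\ref{theo:ext:arcones} applied to the bounded function $h(u)=\sum_k a_k(\1_{\{u\le x_k\}}-\Phi(x_k))$, and tightness via a Billingsley fourth-moment increment bound obtained by Hermite-expanding the indicators and controlling the resulting correlation sums through the spectral estimates already developed in the proof of Theorem~\ref{theo:ext:arcones}. The paper phrases the tightness bound as $\PE[|S_j(s)-S_j(r)|^2|S_j(t)-S_j(s)|^2]\le C|t-r|^\beta$ with $\beta>1$ (coming from $\|f_t-f_s\|_2^2\le C|t-s|$), whereas you state it with $(\Phi(z)-\Phi(x))^2$; these are equivalent here and your version is in fact the more natural one for working on $D([-\infty,\infty])$.
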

\begin{proof}[Proof of Theorem~\ref{theo:ext:csorgo}]
Let
$S_j(x)=n_j^{-1/2}\sum_{i=1}^{n_j}\left(\1_{\{X_{j,i}\leq
    x\}}-F_j(x)\right)$, for all $x$ in $\mathbb{R}$.
We shall first prove that for $x_1,\dots,x_Q$ and
$a_1,\dots,a_Q$ in $\mathbb{R}$
\begin{equation}
  \label{eq:fidis}
  \sum_{q=1}^Q a_q
  S_j(x_q)\cd\mathcal{N}\left(0,\sum_{l\geq1}\frac{c_l^2}{l!}g_\infty^{\star l}(0)\right)\;,
\textrm{ as } n\to\infty\;,
\end{equation}
where $c_l$ is the $l$-th Hermite coefficient of the function $h$
defined by
$$h(\cdot)=\sum_{q=1}^Q a_q\left(\1_{\{\cdot\leq
    x_q\}}-\PE(\1_{\{\cdot\leq x_q\}})\right)\;.$$
Thus,
$
\sum_{q=1}^Q a_q S_j(x_q)=n_j^{-1/2}\sum_{i=1}^{n_j}h(X_{j,i}),
$
where $h$ is bounded and of Hermite rank $\tau\geq 1$ since for all $t$ in $\Rset$,
$\PE(X\1_{X\leq t})=\int_{\Rset}x\1_{x\leq t}\varphi(x)\d x=\int_{-\infty}^t(-\varphi(x))'\d x=-\varphi(t)\neq0,$ and the CLT \eqref{eq:fidis} follows from Theorem~\ref{theo:ext:arcones}.

Let us now prove that there exists a positive constant $C$ and
$\beta>1$ such that for all $ r\leq s \leq t$,
\begin{equation}\label{eq:tightness:csorgo}
\PE\left(|S_j(s)-S_j(r)|^2|S_j(t)-S_j(s)|^2\right)\leq C |t-r|^{\beta}\;.
\end{equation}
The convergence \eqref{eq:ext:csorgo} then follows from
\eqref{eq:fidis},
\eqref{eq:tightness:csorgo}
and \cite[Theorem 13.5]{billingsley:1999}. Note that
 \begin{multline*}
\PE\left(|S_j(s)-S_j(r)|^2|S_j(t)-S_j(s)|^2\right)\\=\frac{1}{n_j^2}\sum_{i,i'=1}^{n_j}\sum_{l,l'=1}^{n_j}\PE\left((f_s-f_r)(X_{j,i})(f_s-f_r)(X_{j,i'})(f_t-f_s)(X_{j,l})(f_t-f_s)(X_{j,l'}\right)\,,
 \end{multline*}
 where $f_t(X)=\1_{\{X\leq t\}}-\PE(\1_{\{X\leq t\}}).$
By developing each difference of functions in Hermite polynomials , we get
 \begin{multline*}
\PE\left(|S_j(s)-S_j(r)|^2|S_j(t)-S_j(s)|^2\right)=
\frac{1}{n_j^2}\sum_{i,i'=1}^{n_j}\sum_{l,l'=1}^{n_j}\sum_{p_1,\dots,p_4\geq1}\\
\frac{c_{p_1}(f_s-f_r)c_{p_2}(f_s-f_r)c_{p_3}(f_t-f_s)c_{p_4}(f_t-f_s)}{p_1!\dots
  p_4!}\PE\left(H_{p_1}(X_{j,i})H_{p_2}(X_{j,i'})H_{p_3}(X_{j,l})H_{p_4}(X_{j,l'})\right)\;.
 \end{multline*}
Using the same arguments as in the case where $m$ is even in the proof
of Theorem~\ref{theo:ext:arcones},
we obtain
\begin{multline*}
\PE\left(|S_j(s)-S_j(r)|^2|S_j(t)-S_j(s)|^2\right)=\frac{1}{n_j^2}\sum_{p_1,p_2\geq1}\sum_{i,i',l,l'=1}^{n_j}\Big[\frac{c^2_{p_1}(f_t-f_s)c^2_{p_2}(f_s-f_r)}{p_1!p_2!}\\\rho_j^{p_1}(i'-i)\rho_j^{p_2}(l'-l)+\frac{c_{p_1}(f_t-f_s)c_{p_1}(f_s-f_r)c_{p_2}(f_t-f_s)c_{p_2}(f_s-f_r)}{p_1!p_2!}\rho_j^{p_1}(l-i)\rho_j^{p_2}(l'-i')\\+\frac{c_{p_1}(f_t-f_s)c_{p_1}(f_s-f_r)c_{p_2}(f_t-f_s)c_{p_2}(f_s-f_r)}{p_1!p_2!}\rho_j^{p_1}(l'-i)\rho_j^{p_2}(l-i')\Big]+O(n_j^{-1})\;.
\end{multline*}
Let $\|\cdot\|_2=(\PE(\cdot)^2)^{1/2}$
and $\pscal{f}{g}=\PE[f(X)g(X)]$, where $X$ is a standard Gaussian
random variable.
Since, by (\ref{eq:rho:conv}),
$\sum_{i,i',l,l'=1}^{n_j}\rho_j^{p_1}(l-i)\rho_j^{p_2}(l'-i')=O(n_j^2)$,
we get with the Cauchy-Schwarz inequality that there exists a positive
constant $C$ such that
\begin{multline*}
\PE\left(|S_j(s)-S_j(r)|^2|S_j(t)-S_j(s)|^2\right)\\\leq C
\sum_{p_1,p_2\geq1}\Big[\frac{c^2_{p_1}(f_t-f_s)c^2_{p_2}(f_s-f_r)}{p_1!p_2!}
+\frac{c_{p_1}(f_t-f_s)c_{p_1}(f_s-f_r)c_{p_2}(f_t-f_s)c_{p_2}(f_s-f_r)}{p_1!p_2!}\Big]
\\\leq
C\left(\left\|f_t-f_s\right\|^2_2\left\|f_s-f_r\right\|^2_2+\left|\pscal{f_t-f_s}{f_s-f_r}\right|^2\right)
\leq C\left\|f_t-f_s\right\|^2_2\left\|f_s-f_r\right\|^2_2\;.
\end{multline*}
Note that
$\left\|f_t-f_s\right\|^2_2
\leq 2 \big(\left\|\1_{\{X\leq t\}}-\1_{\{X\leq s\}}\right\|_2^2+\left\|\PE(\1_{\{X\leq s\}})-\PE(\1_{\{X\leq t\}})\right\|_2^2\big).$
Since $s\leq t$, $\left\|\1_{\{X\leq t\}}-\1_{\{X\leq
    s\}}\right\|_2^2=\Phi(t)-\Phi(s)\leq C|t-s|$, where $\Phi$ denotes
the c.d.f of a standard Gaussian random variable. Moreover,
$\left\|\PE(\1_{\{X\leq s\}})-\PE(\1_{\{X\leq t\}})\right\|_2^2\leq C
|t-s|^2,$
which concludes the proof of (\ref{eq:tightness:csorgo}).
\end{proof}
\begin{proof} [Proof of Proposition~\ref{lemma:asymp:exp}]
 We first prove \eqref{exp} for $\ast=\cl.$
 \begin{align*}
\sqrt{n_j}(\hat{\sigma}^2_{\cl,j}-\sigma^2_j)&=\frac{1}{\sqrt{n_j}}\sum_{i=0}^{n_j-1}(W^2_{j,i}-\sigma_j^2)
=\frac{2\sigma_j^2}{\sqrt{n_j}}\sum_{i=0}^{n_j-1}\frac{1}{2}\bigg(\frac{W^2_{j,i}}{\sigma^2_j}-1\bigg)\;.
 \end{align*}
Let us now prove \eqref{exp} for $\ast=\MAD.$
Let us denote by $F_{n_j}$ the empirical c.d.f of $W_{j,0:n_j-1}$ and
by $F_j$ the c.d.f
of $W_{j,0}$.
Note that
$$\hat{\sigma}_{\MAD,j}=m(\Phi)T_0(F_{n_j})\; ,$$
where $T_0=T_2 \circ T_1$ with
$T_1: F\mapsto \left\{r \mapsto \int_{\Rset}  \1_{\{\vert x\vert\leq
    r\}}\rmd F(x)\right\}$ and
$T_2:U\mapsto U^{-1}(1/2)$. To prove \eqref{exp}, we start by proving that $\sqrt{n_j}(F_{n_j}-F_j)$ converges in distribution in the space of cadlag functions equipped with the topology of uniform convergence. This convergence follows by applying Theorem~\ref{theo:ext:csorgo} to $X_{j,i}=W_{j,i}/\sigma_j$
which is an array of zero mean stationary Gaussian processes by
\cite[Corollary 1]{moulines:roueff:taqqu:2007:jtsa}.
The spectral density $f_j$ of $(X_{j,i})_{i\geq 0}$ is given by $f_j(\lambda)
=\bdens[\phi,\psi]{j,0}{\lambda}{f}/\sigma_j^2$ where
$\bdens[\phi,\psi]{j,0}{\cdot}{f}$
is the within scale spectral density of the process
$\{W_{j,k}\}_{k\geq0}$
defined in \eqref{eq:def:cov:rob} and $\sigma^2_j$ is the wavelet spectrum defined in \eqref{eq:def_sigmaj}. Here,
$g_\infty(\lambda)=\bdensasymp[\psi]{0}{\lambda}{d}/\Kvar[\psi]{d}$, with $\bdensasymp[\psi]{0}{\cdot}{d}$ defined in \eqref{eq:bDpsi} and $\Kvar[\psi]{d}=\int_{-\infty}^{+\infty}|\xi|^{-2d}|\hat{\psi}(\xi)|^2\d\xi$
since by \cite[(26) and (29) in Theorem 1]{moulines:roueff:taqqu:2007:jtsa}
$$\left|\frac{\bdens[\phi,\psi]{j,0}{\lambda}{f}}{f^\ast(0)\Kvar[\psi]{d}2^{2dj}}
  - \frac{\bdensasymp[\psi]{0}{\lambda}{d}}{\Kvar[\psi]{d}}\right|
\leq C\,  L \,\Kvar[\psi]{d}^{-1}\, 2^{-\beta j}\to0\;,\text{ as } n\to\infty\;,$$
$$
\left|\frac{\sigma_j^2}{f^\ast(0)\Kvar[\psi]{d}2^{2dj}}-1\right|
\leq C\,  L \, 2^{-\beta j}\to0\;,\text{ as } n\to\infty\;.
$$
Note also that, by  \cite[Theorem 1]{moulines:roueff:taqqu:2007:jtsa},
$g_\infty(\lambda)$ is a continuous and $2\pi$-periodic
function on $(-\pi,\pi)$. Moreover, $g_\infty(\lambda)$ is bounded
on $(-\pi,\pi)$ by Lemma \ref{lem:D_inf:bounded} and
$$u_j=C_1 \frac{2^{-\beta j}}{\sigma_j^2/2^{2dj}}\left(2^{-\beta j}+C_2\frac{\sigma_j^2}{2^{2dj}}\right)\to 0,  \textrm{ as } n\to\infty\;,$$
 where $C_1$ and $C_2$ are positive constants.
The asymptotic expansion~\eqref{exp} for $\hat{\sigma}_{\MAD,j}$ can be deduced from the functional Delta method stated \textit{e.g} in \cite[Theorem~20.8]{vandervaart:1998} and the classical Delta Method stated \textit{e.g} in \cite[Theorem~3.1]{vandervaart:1998}. To show this, we have to prove that $T_0=T_1\circ T_2$ is Hadamard differentiable and that the corresponding Hadamard differential is defined and continuous on the whole space of cadlag functions. We prove first the Hadamard differentiability of the functional $T_1$. Let $(g_t)$ be a sequence of cadlag functions with bounded variations such that $\|g_t-g\|_\infty\to0$, as $t\to0,$ where $g$ is a cadlag function. For any non negative r, we consider
\begin{align*}
\frac{T_1(F_j+tg_t)[r]-T_1(F_j)[r]}{t}&=\frac{(F_j+tg_t)(r)-(F_j+tg_t)(-r)-F_j(r)+F_j(-r)}{t}\\
&=\frac{tg_t(r)-tg_t(-r)}{t}=g_t(r)-g_t(-r)\to g(r)-g(-r ),\quad
\end{align*}
since $\|g_t-g\|_\infty\to0$, as  $t\to0.$ The Hadamard differential of $T_1$ at $g$ is given by :
\begin{align*}
(DT_1(F_j).g)(r)=g(r)-g(-r).
\end{align*}
By \cite[Lemma~21.3]{vandervaart:1998}, $T_2$ is Hadamard differentiable. Finally, using the Chain rule \cite[Theorem~20.9]{vandervaart:1998}, we obtain the Hadamard differentiability of $T_0$ with the following Hadamard differential :
\begin{align*}
DT_0(F_j).g=-\frac{(DT_1(F_j).g)(T_0(F_j))}{(T_1(F_j))'[T_0(F_j)]}=-\frac{g(T_0(F_j))-g(-T_0(F_j))}{(T_1(F_j))'[T_0(F_j)]}.
\end{align*}
In view of the last expression, $DT_0(F_j)$ is a continuous function of $g$ and is defined on the whole space of cadlag functions. Thus by \cite[Theorem~20.8]{vandervaart:1998}, we obtain :
\begin{align*}
m(\Phi)\sqrt{n_j}\left(T_0(F_{n_j})-T_0(F_j)\right)=m(\Phi)DT_0(F_j)\left\{\sqrt{n_j}(F_{n_j}-F_j)\right\}+o_P(1),
\end{align*}
where $m(\Phi)$ is the constant defined in~\eqref{eq:mphi}.
Since $T_0(F_j)={\sigma_j}/{m(\Phi)}$ and
$(T_1(F_j))'(r)={2}{\sigma_j}^{-1}\varphi({r}/{\sigma_j}),$
where $\varphi$ is the p.d.f of a standard Gaussian random variable, we get
$$\sqrt{n_j}\left(\hat{\sigma}_{\MAD,j}-\sigma_j\right)=\frac{\sigma_j}{\sqrt{n_j}}
\sum_{i=0}^{n_j-1}\IF\left(\frac{W_{j,i}}{\sigma_j},\MAD,\Phi\right)+o_P(1)
$$ and the expansion~\eqref{exp} for
$\ast=\MAD$ follows from the classical Delta method applied with $f(x)=x^2$.
We end the proof of Proposition~\ref{lemma:asymp:exp} by proving the
asymptotic expansion \eqref{exp} for $\ast=\CR$.
We use the same arguments as those used previously. In this case the
Hadamard differentiability comes from
\cite[Lemma~1]{levy-leduc:boistard:2009}.
\end{proof}

The following theorem is an extension of \cite[Theorem~4]{arcones:1994} to arrays of stationary Gaussian processes in the multidimensional case.
\begin{theorem}\label{theo:ext:mult:arcones}
Let $\X_{J,i}=\big\{X^{(0)}_{J,i},\dots,X^{(d)}_{J,i}\big\}$ be
an array of standard stationary Gaussian processes such that for
$j,j'$ in $\{0,\dots,d\}$, the vector
$\big\{X_{J,i}^{(j)},X_{J,i}^{(j')}\big\}$
has a cross-spectral density $f_{J}^{(j,j')}$ and a
cross-correlation function $\rho_{J}^{(j,j')}$ defined by
$\rho_{J}^{(j,j')}(k)=\PE\big(X_{J,i}^{(j)}X_{J,i+k}^{(j')}\big)$,
for all $k\geq 0$.
Assume also that there exists a non increasing sequence $\{u_{J}\}_{J\geq1}$ such that $u_{J}$ tends to zero as $J$ tends to infinity and for all $J\geq1,$
\begin{equation}\label{eq:assumption:fJ0}
\sup_{\lambda\in(-\pi,\pi)}\big|f^{(j,j')}_{J}(\lambda)-g^{(j,j')}_\infty(\lambda)\big|\leq u_{J}\;,
\end{equation}
where $g^{(j,j')}_\infty$ is a $2\pi$-periodic function which is bounded on $(-\pi,\pi)$ and
continuous at the origin.
Let $h$ be a function on
$\Rset$ with Hermite rank $\tau\geq 1$ which is either bounded or is a
finite linear combination of Hermite polynomials.
Let $\boldsymbol{\beta}=\{\beta_0,\dots,\beta_d\}$ in $\Rset^{d+1}$ and $\mathcal{H}: \Rset^{d+1}\to \Rset$ the
real valued function defined by
$\mathcal{H}(\bx)=\sum_{j=0}^d\beta_jh(x_j)$. 
Let $\{n_J\}_{J\geq1}$ be a sequence of integers such that $n_J$ tends to infinity as $J$ tends to infinity. Then
\begin{equation}\label{eq:mult:arcones}
\frac{1}{\sqrt{n_{J}}}\sum_{i=1}^{n_{J}}\mathcal{H}\left(\X_{J,i}\right)\cd
\mathcal{N}\left(0,\tilde{\sigma}^2\right)\;,\textrm{ as } J\to\infty\; ,
\end{equation}
where
$$
\tilde{\sigma}^2=\lim_{n\to\infty}\PVar\Big(\frac{1}{\sqrt{n_{J}}}\sum_{i=1}^{n_{J}}
\mathcal{H}(\X_{J,i})\Big)=
\sum_{\ell\geq \tau}\frac{c_\ell^2}{\ell !}\sum_{0\leq j,j'\leq d} \beta_j\beta_{j'}(g^{(j,j')}_\infty)^{\star
  \ell}(0)\; .
$$
In the previous equality, $c_\ell=\PE[h(X)H_\ell(X)]$, where $H_\ell$ is the $\ell$-th Hermite
polynomial and $X$ is a standard Gaussian random variable.
\end{theorem}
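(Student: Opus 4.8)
The plan is to rerun, for the additive functional $\mathcal{H}$ and with cross-correlations in place of autocorrelations, the method-of-moments argument used to establish Theorem~\ref{theo:ext:arcones}. Because $\mathcal{H}$ is additive there is no shortcut through the Cram\'er--Wold device: the relevant linear combination is $S_J := n_J^{-1/2}\sum_{i=1}^{n_J}\mathcal{H}(\X_{J,i}) = \sum_{j=0}^{d}\beta_j\, n_J^{-1/2}\sum_{i=1}^{n_J} h(X_{J,i}^{(j)})$, which is exactly the quantity the theorem asks us to prove asymptotically Gaussian. Writing $h = \sum_{l\geq\tau}(c_l/l!)H_l$, I would first treat the case where $h$ is a finite linear combination of Hermite polynomials, so that every moment of $S_J$ is a polynomial in the Gaussian variables $X_{J,i}^{(j)}$ and the diagram formula is available; the bounded case then follows by the truncation argument of \cite{arcones:1994}, that is, writing $h = h_N + r_N$ with $h_N$ the Hermite partial sum of order $N$, applying the polynomial case to $h_N$, and bounding $\PVar\big(n_J^{-1/2}\sum_{i=1}^{n_J} r_N(X_{J,i}^{(j)})\big)$ uniformly in $J$ by means of $\sum_{k}(\rho_J^{(j,j)}(k))^2 = \int_{-\pi}^{\pi}|f_J^{(j,j)}|^2 \leq C$ and hypercontractivity. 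Since a centered Gaussian law is determined by its moments, it then suffices to prove that $\PVar(S_J)\to\tilde\sigma^2$, that $\PE[S_J^{2p+1}]\to0$, and that $\PE[S_J^{2p}]\to(2p)!\,\tilde\sigma^{2p}/(2^{p}p!)$ for every $p\geq1$.

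For the variance, Mehler's formula gives $\PVar(S_J) = n_J^{-1}\sum_{i,i'=1}^{n_J}\sum_{j,j'=0}^{d}\beta_j\beta_{j'}\sum_{l\geq\tau}(c_l^2/l!)\,(\rho_J^{(j,j')}(i-i'))^{l}$, so the whole point reduces to the vector analogue of \eqref{eq:rho:conv}, namely $n_J^{-1}\sum_{i,i'=1}^{n_J}(\rho_J^{(j,j')}(i-i'))^{l}\to(g_\infty^{(j,j')})^{\star l}(0)$. This is obtained exactly as \eqref{eq:lim:rho} and \eqref{eq:rho:conv}: one rewrites the left-hand side as $\int_{-\pi}^{\pi}(f_J^{(j,j')})^{\star l}(\lambda)\,F_{n_J}(\lambda)\,\d\lambda$ with $F_{n_J}$ the Fej\'er kernel, notes that \eqref{eq:assumption:fJ0} upgrades to $\|(f_J^{(j,j')})^{\star l} - (g_\infty^{(j,j')})^{\star l}\|_\infty\to0$ since the $g_\infty^{(j,j')}$ are bounded, and uses the continuity at the origin of $(g_\infty^{(j,j')})^{\star l}$ (automatic for $l\geq2$, as a convolution of bounded $2\pi$-periodic functions, and assumed for $l=1$) together with the approximate-identity property of $F_{n_J}$. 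The interchange of $\sum_{l\geq\tau}$ and $\lim_{J}$ is justified by the Fatou-type Lemma~\ref{lemma:double:fatou}, used with the dominating array $g_{n_J}(s,l) = (c_l^2/l!)\,\1_{\{|s|<n_J\}}\big(1-|s|/n_J\big)(\rho_J^{(j,j')}(s))^{2}$. One concludes $\PVar(S_J)\to\sum_{l\geq\tau}(c_l^2/l!)\sum_{j,j'}\beta_j\beta_{j'}(g_\infty^{(j,j')})^{\star l}(0) = \tilde\sigma^2$.

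For the higher moments, expand $\PE[S_J^{m}]$ as $n_J^{-m/2}$ times the sum over $i_1,\dots,i_m\in\{1,\dots,n_J\}$, over $j_1,\dots,j_m\in\{0,\dots,d\}$ and over $l_1,\dots,l_m\geq\tau$ of $\beta_{j_1}\cdots\beta_{j_m}\,(c_{l_1}\cdots c_{l_m})/(l_1!\cdots l_m!)\,\PE\big[\prod_{a=1}^{m}H_{l_a}(X_{J,i_a}^{(j_a)})\big]$, and apply the diagram formula (\cite[Formula~(33)]{rosenblatt:1985}), which writes the inner expectation as a sum over perfect matchings of the $\sum_a l_a$ stubs attached to the vertices $1,\dots,m$, each stub-edge between vertices $a$ and $b$ contributing a factor $\rho_J^{(j_a,j_b)}(i_a-i_b)$. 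As in parts~1) and~2) of the proof of Theorem~\ref{theo:ext:arcones}, the only terms of order $n_J^{m/2}$ are those whose induced graph on $\{1,\dots,m\}$ is itself a perfect matching, which forces $m=2p$ to be even (whence $\PE[S_J^{2p+1}] = O(n_J^{-1/2})\to0$); for a given pairing into $p$ pairs the summand factorizes over the pairs, and summing over the $i$'s and the $j$'s and invoking the variance limit yields $n_J^{p}\tilde\sigma^{2p}$, there being $(2p)!/(2^{p}p!)$ such pairings. Each remaining diagram --- one in which a connected component of the induced graph carries more than two vertices, or in which $|\{i_1,\dots,i_m\}|<m$ --- is $o(n_J^{m/2})$ by the two estimates $\sum_{r=1}^{n_J}\big(\sum_{s=1}^{n_J}\rho_J^{(j,j')}(r-s)\big)^{2} = O(n_J)$ and $\sum_{i_1,\dots,i_4=1}^{n_J}\rho_J^{(j_1,j_2)}(i_2-i_1)\rho_J^{(j_3,j_4)}(i_3-i_4)\rho_J^{(j_1,j_3)}(i_3-i_1) = O(n_J)$, whose proofs repeat those of \eqref{eq:rho:rneqs} and \eqref{eq:rho:rneq:even}: they use only the uniform bound $\sup_{J\geq1}\sup_{\lambda\in(-\pi,\pi)}|f_J^{(j,j')}(\lambda)|\leq C$ (a consequence of \eqref{eq:assumption:fJ0} and the boundedness of the $g_\infty^{(j,j')}$), the Dirichlet-kernel bound $|D_{n_J}(\lambda)|\leq c\,n_J/(1+n_J|\lambda|)$, and the convergence of the integrals of Lemmas~\ref{lem:conv:integrale} and~\ref{lem:conv:integrale:even}; the diagonal configurations $|\{i_1,\dots,i_m\}|<m$ are treated as in parts~1)a) and~1)b) of that proof. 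Gathering these facts gives convergence of every moment of $S_J$ to the corresponding moment of $\mathcal{N}(0,\tilde\sigma^2)$, hence \eqref{eq:mult:arcones}.

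I expect the main obstacle to lie in the combinatorial bookkeeping of the diagram formula in the vector-valued setting: one has to keep track of which component index $j_a$ sits at each vertex, verify that the leading complete-pairing terms assemble exactly into the bilinear form $\sum_{j,j'}\beta_j\beta_{j'}(g_\infty^{(j,j')})^{\star l}(0)$ with the correct multiplicity $(2p)!/(2^{p}p!)$, and --- most delicately --- show that every non-matching diagram is negligible \emph{uniformly in $J$}. By contrast, the analytic ingredients (the Dirichlet- and Fej\'er-kernel estimates and the convergence of the auxiliary integrals) carry over unchanged from the one-dimensional proof, since only the uniform $L^\infty$ control of the cross-spectral densities $f_J^{(j,j')}$ is used there.
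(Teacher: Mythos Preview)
Your proposal is correct and is precisely what the paper intends: the paper's own proof is the single sentence ``The proof of Theorem~\ref{theo:ext:mult:arcones} follows the same lines as the one of Theorem~\ref{theo:ext:arcones} and is thus omitted,'' and you have accurately reconstructed what ``the same lines'' means --- rerunning the method-of-moments argument with cross-correlations $\rho_J^{(j,j')}$ and cross-spectral densities $f_J^{(j,j')}$ in place of their univariate counterparts, the analytic estimates \eqref{eq:rho:rneqs}, \eqref{eq:lim:rho}, \eqref{eq:rho:conv}, \eqref{eq:rho:rneq:even} carrying over verbatim because they use only the uniform $L^\infty$ bound on the spectral densities. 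One minor organizational difference: for bounded $h$ the paper's proof of Theorem~\ref{theo:ext:arcones} does not truncate but instead expands $|h|$ in Hermite polynomials to control the moments directly (see the paragraph following \eqref{eq:maj:h_bound}); your truncation route via Arcones is an equally valid alternative.
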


The proof of Theorem \ref{theo:ext:mult:arcones} follows the same
lines as the one of Theorem \ref{theo:ext:arcones} and is thus omitted.

\begin{proof} [Proof of Theorem~\ref{theo:clt:wavelet}]

Without loss of generality, we set  $f^\ast(0)=1$.
In order to prove \eqref{eq:JointCentralLimitEmpVar}, let us first
prove that for
$\boldsymbol{\alpha}=(\alpha_{0},\dots,\alpha_\ell)$ where the
$\alpha_i$'s are in $\Rset$,
\begin{equation}\label{eq:clt3}
\sqrt{n2^{-J_0}}2^{-2J_0d}\sum_{j=0}^\ell\alpha_j\left(\hat{\sigma}^2_{\ast,J_0+j}(W_{J_0+j,0:n_{J_0+j}-1})-\sigma^2_{\ast,J_0+j}\right)\cd\mathcal{N}\left(0,\boldsymbol{\alpha}^T\mathbf{U}_\ast(d)\boldsymbol{\alpha}\right).
\end{equation}
By Proposition~\ref{lemma:asymp:exp},
\begin{multline}\label{eq:exp:mult}
\sqrt{n2^{-J_0}}2^{-2J_0d}\sum_{j=0}^\ell\alpha_j\left(\hat{\sigma}^2_{\ast,J_0+j}(W_{J_0+j,0:n_{J_0+j}-1})-\sigma^2_{\ast,J_0+j}\right)\\=\sum_{j=0}^\ell\frac{\sqrt{n2^{-J_0}}2^{-2J_0d}}{n_{J_0+j}}2\alpha_j\sigma^2_{J_0+j}\sum_{i=0}^{n_{J_0+j}-1}\IF\left(\frac{W_{J_0+j,i}}{\sigma_{J_0+j}},\ast,\Phi\right)+o_P(1).
\end{multline}
Thus, proving \eqref{eq:clt3} amounts to proving that
\begin{equation}\label{eq:expansion:wavelet}
\frac{2^{-\ell/2} f^{\ast}(0)\Kvar[\psi]{d}}{\sqrt{n_{J_0+\ell}}}  \sum_{j=0}^\ell
2\alpha_j
2^{2dj+j}
\sum_{i=0}^{n_{J_0+j}-1}\IF\left(\frac{W_{J_0+j,i}}{\sigma_{J_0+j}},\ast,\Phi\right)\cd\mathcal{N}\left(0,\boldmath{\alpha}^T\mathbf{U}_\ast(d)\boldmath{\alpha}\right)\;,
\end{equation}
since
$\sigma^2_{J_0+j}\sqrt{n2^{-J_0}}2^{-2J_0d}/n_{J_0+j}\sim
2^{2dj-\ell/2+j}\Kvar[\psi]{d}f^\ast(0)/\sqrt{n_{J_0+\ell}}$
, as $n$ tends to infinity,
by \cite[(29) in Theorem 1]{moulines:roueff:taqqu:2007:jtsa}.
Note that
\begin{multline*}
\sum_{i=0}^{n_{J_0+j}-1}\IF\left(\frac{W_{J_0+j,i}}{\sigma_{J_0+j}},\ast,\Phi\right)
=\sum_{i=0}^{n_{J_0+\ell}-1}\sum_{v=0}^{2^{\ell-j}-1}
\IF\left(\frac{W_{j+J_0,2^{\ell-j}i+v}}{\sigma_{J_0+j}},\ast,\Phi\right)\\
+\sum_{q=n_{J_0+j}-(\L-1)(2^{\ell-j}-1)}^{n_{J_0+j}-1}\IF\left(\frac{W_{j+J_0,q}}{\sigma_{J_0+j}},\ast,\Phi\right)
\end{multline*}
Using the notation: $\beta_j=2\alpha_j
2^{2dj-\ell/2+j}\Kvar[\psi]{d}f^\ast(0)$ and that $\IF$ is either bounded
or equal to $H_2/2$,
\begin{eqnarray*}
& &\frac{1}{\sqrt{n_{J_0+\ell}}}  \sum_{j=0}^\ell\beta_j
\sum_{i=0}^{n_{J_0+j}-1}\IF\left(\frac{W_{J_0+j,i}}{\sigma_{J_0+j}},\ast,\Phi\right)\\
&=&\frac{1}{\sqrt{n_{J_0+\ell}}}  \sum_{j=0}^\ell\beta_j
\sum_{i=0}^{n_{J_0+\ell}-1}\sum_{v=0}^{2^{\ell-j}-1}
\IF\left(\frac{W_{j+J_0,2^{\ell-j}i+v}}{\sigma_{J_0+j}},\ast,\Phi\right)+o_P(1)\\
&=&\frac{1}{\sqrt{n_{J_0+\ell}}}\sum_{i=0}^{n_{J_0+\ell}-1}
\F(Y_{J_0,\ell,i},\ast)+o_P(1)\; ,
\end{eqnarray*}
where
$$\F(Y_{J_0,\ell,i},\ast)=\sum_{j=0}^{\ell}\beta_j
\sum_{v=0}^{2^{\ell-j}-1}\IF\left(\frac{W_{j+J_0,2^{\ell-j}i+v}}{\sigma_{J_0+j}},\ast,\Phi\right) $$ and
\begin{multline*}
Y_{J_0,\ell,i}=\bigg(\frac{W_{J_0+\ell,i}}{\sigma_{J_0+\ell}},\frac{W_{J_0+\ell-1,2i}}{\sigma_{J_0+\ell-1}},\frac{W_{J_0+\ell-1,2i+1}}{\sigma_{J_0+\ell-1}},\dots,\frac{W_{J_0+j,2^{\ell-j}i}}{\sigma_{J_0+j}},\\
\dots\frac{W_{J_0+j,2^{\ell-j}i+2^{\ell-j}-1}}{\sigma_{J_0+j}},\dots,\frac{W_{J_0,2^{\ell}i}}{\sigma_{J_0}},\dots,\frac{W_{J_0,2^{\ell}i+2^\ell-1}}{\sigma_{J_0}}\bigg)^T
\end{multline*}
is a $2^{\ell+1}-1$ stationary Gaussian vector.
By Lemma \ref{lemma:rank}, $\F$ is of Hermite rank larger than 2.
Hence, from Theorem~\ref{theo:ext:mult:arcones} applied to
$\mathcal{H}(\cdot)=\F(\cdot),$ $\X_{J,i}=Y_{J_0,\ell,i}$ and $h(\cdot)=\IF(\cdot)$, we get
\begin{equation}
\label{eq:new:clt}
\frac{1}{\sqrt{n_{J_0+\ell}}}\sum_{i=0}^{n_{J_0+\ell}-1}\F(Y_{J_0,\ell,i},\ast)
\cd\mathcal{N}(0,\tilde{\sigma}^2_\ast)\;,
\end{equation}
where
$\tilde{\sigma}^2_\ast=
\lim_{n\to\infty}n_{J_0+\ell}^{-1}\PVar\Big(\sum_{i=0}^{n_{J_0+\ell}-1}\F(Y_{J_0,\ell,i},\ast)\Big)$.
By  \cite[(26) and (29)]{moulines:roueff:taqqu:2007:jtsa}
and by using the same arguments as those used in the proof of Proposition~\ref{lemma:asymp:exp},
Condition \eqref{eq:assumption:fJ0} of
Theorem~\ref{theo:ext:mult:arcones} holds with
$f_J^{(j,j')}(\lambda)=\bdenssingle[\psi]{^{(r)}}{J_0+j}{j-j'}{\lambda}{f}/{\sigma_{J_0+j}\sigma_{J_0+j'}}$
and
$g_\infty^{(j,j')}=
\bdensasympconv[\psi]{(r)}{j-j'}{\lambda}{d}/\Kvar{d}$, where $0\leq
r\leq 2^{j-j'}-1$
and $\bdens[\phi,\psi]{J_0+j,j-j'}{\cdot}{f}$ is the cross-spectral density of the stationary between scale
 process defined in (\ref{eq:def:cov:rob}).
Lemma \ref{lem:D_inf:bounded} and \cite[Theorem~1]{moulines:roueff:taqqu:2007:jtsa} ensure that
$\bdensasympconv[\psi]{(r)}{j-j'}{\cdot}{d}$ is a   bounded,
continuous and $2\pi$-periodic function.

By using Mehler's formula
\cite[Eq. (2.1)]{breuer:major:1983} and the expansion of $\IF$ onto the Hermite
polymials basis given by: $\IF(x,\ast,\Phi)=\sum_{p\geq 2}c_p(\IF_\ast) H_p(x)/p!$, where
$c_p(\IF_\ast)=\PE[\IF(X,\ast,\Phi)H_p(X)]$, $H_p$ being the $p$th
Hermite polynomial, we get 
\begin{multline}\label{eq:lim_variance}
\frac{1}{n_{J_0+\ell}}\PVar\Big(\sum_{i=0}^{n_{J_0+\ell}-1}\F(Y_{J_0,\ell,i},\ast)\Big)\\
=\frac{1}{n_{J_0+\ell}}\sum_{j,j'=1}^\ell\beta_j\beta_{j'}\sum_{i,i'=0}^{n_{J_0+\ell}-1}
\sum_{v=0}^{2^{\ell-j}-1}\sum_{v'=0}^{2^{\ell-j'}-1}
\PE\Big[\IF\Big(\frac{W_{J_0+j,2^{\ell-j}i+v}}{\sigma_{J_0+j}},\ast,\Phi\Big)\IF\Big(\frac{W_{J_0+j',2^{\ell-j'}i'+v'}}{\sigma_{J_0+j'}},\ast,\Phi\Big)\Big]\\
=\frac{1}{n_{J_0+\ell}}\sum_{j,j'=1}^\ell\beta_j\beta_{j'}\sum_{i=0}^{n_{J_0+j}-1}\sum_{i'=0}^{n_{J_0+j'}-1}
\PE\Big[\IF\Big(\frac{W_{J_0+j,i}}{\sigma_{J_0+j}},\ast,\Phi\Big)\IF\Big(\frac{W_{J_0+j',i'}}{\sigma_{J_0+j'}},\ast,\Phi\Big)\Big]+o(1)\\
= \frac{1}{n_{J_0+\ell}}\sum_{j,j'=1}^\ell \beta_j \beta_{j'}
\sum_{i=0}^{n_{J_0+j}-1}\sum_{i'=0}^{n_{J_0+j'}-1}
\sum_{p\geq 2} \frac{c_p^2(\IF_\ast)}{p!}
\PE\Big[\frac{W_{J_0+j,i}}{\sigma_{J_0+j}}\frac{W_{J_0+j',i'}}{\sigma_{J_0+j'}}\Big]^p+o(1)\; .
\end{multline}
Without loss of generality, we shall assume in the sequel that $j\geq j'$.
\eqref{eq:lim_variance} can be rewritten as follows by using that
$i'=2^{j-j'}q+r$, where $q\in\mathbb{N}$ and
$r\in\{0,1,\dots,2^{j-j'}-1\}$
and Eq. (18) in \cite{moulines:roueff:taqqu:2007:jtsa}
\begin{multline*}
 \frac{1}{n_{J_0+\ell}}\sum_{j,j'=1}^\ell \beta_j\beta_{j'}
\sum_{i=0}^{n_{J_0+j}-1}\sum_{q=0}^{n_{J_0+j}-1}\sum_{r=0}^{2^{j-j'}-1}
\sum_{p\geq 2} \frac{c_p^2(\IF_\ast)}{p!}
\PE\Big[\frac{W_{J_0+j,0}}{\sigma_{J_0+j}}\frac{W_{J_0+j',2^{j-j'}(q-i)+r}}{\sigma_{J_0+j'}}\Big]^p+o(1)\\
=\frac{  n_{J_0+j}}{n_{J_0+\ell}}\sum_{j,j'=1}^\ell \beta_j\beta_{j'}
\sum_{|\tau|<n_{J_0+j}}\sum_{r=0}^{2^{j-j'}-1}
\sum_{p\geq 2} \frac{c_p^2(\IF_\ast)}{p!}\Big(1-\frac{|\tau|}{n_{J_0+j}}\Big)\PE\Big[\frac{W_{J_0+j,0}}{\sigma_{J_0+j}}\frac{W_{J_0+j',2^{j-j'}\tau+r}}{\sigma_{J_0+j'}}\Big]^p+o(1)\\
=\frac{ n_{J_0+j}}{n_{J_0+\ell}}\sum_{j,j'=1}^\ell \beta_j\beta_{j'}
\sum_{|\tau|<n_{J_0+j}}\sum_{r=0}^{2^{j-j'}-1}
\sum_{p\geq  2}\frac{c_p^2(\IF_\ast)}{p!}\Big(1-\frac{|\tau|}{n_{J_0+j}}\Big)
\Big(\int_{-\pi}^{\pi}\frac{\bdenssingle[\psi]{^{(r)}}{J_0+j}{j-j'}{\lambda}{f}\rme^{\rmi\lambda\tau}}
{\sigma_{J_0+j}\sigma_{J_0+j'}}\d\lambda\Big)^p  +o(1)\;,
\end{multline*}
where $\bdens[\phi,\psi]{J_0+j,j-j'}{\cdot}{f}$ is the cross-spectral density of the stationary between scale
 process defined in (\ref{eq:def:cov:rob}).
We aim at applying Lemma~\ref{lemma:double:fatou} with $f_n$, $g_n$,
$f$ and $g$ defined hereafter.
$$f_{n_{J_0+j}}(\tau,p)=\frac{c_p^2(\IF_\ast)}{p!}\sum_{r=0}^{2^{j-j'}-1}
\1\{|\tau|<n_{J_0+j}\}
\Big(1-\frac{|\tau|}{n_{J_0+j}}\Big)\PE\Big[\frac{W_{J_0+j,0}}{\sigma_{J_0+j}}\frac{W_{J_0+j',2^{j-j'}\tau+r}}{\sigma_{J_0+j'}}\Big]^p \;.
$$
Observe that $|f_{n_{J_0+j}}|\leq g_{n_{J_0+j}}$, where
$$
g_{n_{J_0+j}}(\tau,p)=\frac{c_p^2(\IF_\ast)}{p!}
\sum_{r=0}^{2^{j-j'}-1}\1\{|\tau|<n_{J_0+j}\}
\Big(1-\frac{|\tau|}{n_{J_0+j}}\Big)
\PE\Big[\frac{W_{J_0+j,0}}{\sigma_{J_0+j}}\frac{W_{J_0+j',2^{j-j'}\tau+r}}{\sigma_{J_0+j'}}\Big]^2\;.
$$
Using \cite[(26) and (29) in Theorem
1]{moulines:roueff:taqqu:2007:jtsa} we get that
$$\lim_{n\to\infty}\frac{\bdens[\phi,\psi]{J_0+j,j-j'}{\lambda}{f}}{\sigma_{J_0+j}\sigma_{J_0+j'}}
=\frac{2^{d(j-j')}}{\Kvar[\psi]{d}}\bdensasymp[\psi]{j-j'}{\lambda}{d}\;.$$
This implies that $\lim_{n\to\infty}f_{n_{J_0+j}}(\tau,p)=f(\tau,p)$ where
$$
f(\tau,p)=\frac{c_p^2(\IF_\ast)}{p!}\sum_{r=0}^{2^{j-j'}-1}
\Big(\frac{2^{d(j-j')}}{\Kvar[\psi]{d}}\int_{-\pi}^{\pi}\bdensasympconv[\psi]{(r)}{j-j'}{\lambda}{d}\rme^{\rmi\lambda\tau}
\d\lambda\Big)^p\;.
$$
Futhermore, $\lim_{n\to\infty}g_{n_{J_0+j}}(\tau,p)=g(\tau,p)$ where
$$
g(\tau,p)=\frac{c_p^2(\IF_\ast)}{p!}\frac{2^{2d(j-j')}}{\Kvar[\psi]{d}^2}\Big|\int_{-\pi}^{\pi}\bdensasymp[\psi]{j-j'}{\lambda}{d}\rme^{\rmi\lambda\tau}
\d\lambda\Big|^2_2\;,
$$
and $|\bx|_2^2=\sum_{k=1}^r x_k^2$ for $\bx=(x_1,\dots,x_r)\in\mathbb{R}^r.$
Using (63)-(65) in \cite{moulines:roueff:taqqu:2007:jtsa} we get
$$
\sum_{p\geq 2}\sum_{\tau\in\mathbb{Z}}g_{n_{J_0+j}}(\tau,p)
\longrightarrow\Big(\sum_{p\geq
  2}\frac{c_p^2(\IF_\ast)}{p!}\Big)\frac{2^{2d(j-j')}}{\Kvar[\psi]{d}^2}2\pi
\int_{-\pi}^{\pi}|\bdensasymp[\psi]{j-j'}{\lambda}{d}|_2^2
\d\lambda\;,\textrm{ as }n\to\infty\;,
$$
Then, with Lemma~\ref{lemma:double:fatou}, we obtain
\begin{multline*}
\tilde{\sigma}^2_\ast=\sum_{p\geq2}\frac{c_p^2(\IF_\ast)(f^\ast(0))^2}{p!\Kvar[\psi]{d}^{p-2}}
\sum_{j,j'=0}^\ell
4\alpha_j\alpha_{j'}2^{dj(2+p)}2^{dj'(2-p)+j'}\sum_{\tau\in\Zset}\sum_{r=0}^{2^{j-j'}-1
}\Big(\int_{-\pi}^{\pi}\bdensasympconv[\psi]{(r)}{j-j'}{\lambda}{d}\rme^{\rmi\lambda\tau}\d\lambda\Big)^p\;.
\end{multline*}
\end{proof}


\section{Technical Lemmas}\label{sec:lemmas}

\begin{lemma}
\label{lemma:rank}
Let $X$ be a standard Gaussian random variable. The influence
functions $\IF$ defined in Proposition~\ref{lemma:asymp:exp} have the following properties
\begin{align}\label{eq1:cl}
\PE[\IF(X,\ast,\Phi)]=0\;,
\\\label{eq2:cl}
\PE[X\IF(X,\ast,\Phi)]=0\;,\\\label{eq3:cl}
\PE[X^2\IF(X,\ast,\Phi)]\neq0\;.
\end{align}
\end{lemma}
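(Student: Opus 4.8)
The plan is to verify the three identities \eqref{eq1:cl}, \eqref{eq2:cl} and \eqref{eq3:cl} for each of the three influence functions by expanding everything in Hermite polynomials and using the orthogonality relations $\PE[H_p(X)H_q(X)] = p!\,\delta_{p,q}$ for a standard Gaussian $X$. Identities \eqref{eq1:cl} and \eqref{eq2:cl} together say that $c_0(\IF_\ast) = c_1(\IF_\ast) = 0$, i.e. that $\IF(\cdot,\ast,\Phi)$ has Hermite rank at least $2$; identity \eqref{eq3:cl} says that $c_2(\IF_\ast)\neq 0$, so that the rank is exactly $2$. Recall $H_0(x)=1$, $H_1(x)=x$, $H_2(x)=x^2-1$, so that \eqref{eq1:cl}--\eqref{eq3:cl} are equivalent to $\PE[\IF(X,\ast,\Phi)H_0(X)]=0$, $\PE[\IF(X,\ast,\Phi)H_1(X)]=0$ and $\PE[\IF(X,\ast,\Phi)H_2(X)]\neq0$; this reformulation is what feeds directly into Theorem~\ref{theo:ext:mult:arcones} and Lemma~\ref{lemma:rank}'s use in the proof of Theorem~\ref{theo:clt:wavelet}.

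For $\ast=\cl$, where $\IF(x,\cl,\Phi)=\tfrac12 H_2(x)=\tfrac12(x^2-1)$, the three statements are immediate from orthogonality: $\PE[H_2(X)]=0$, $\PE[X H_2(X)]=\PE[H_1(X)H_2(X)]=0$, and $\PE[X^2 H_2(X)] = \PE[(H_2(X)+1)H_2(X)] = \PE[H_2(X)^2] = 2 \neq 0$. For $\ast=\MAD$ and $\ast=\CR$, the first two identities follow from the \emph{structure} of the influence functions rather than from explicit integration. In both cases $\IF(x,\ast,\Phi)$ is an even function of $x$ (each one is built from the symmetric combination $\1_{\{x\le a\}} - \1_{\{x\le -a\}}$, up to an additive constant, with $a = 1/m(\Phi)$ or $a = 1/c(\Phi)$), so $x\mapsto x\,\IF(x,\ast,\Phi)$ is odd and integrable against the Gaussian density, giving \eqref{eq2:cl}. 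For \eqref{eq1:cl}, one checks directly that the constants have been calibrated precisely so that $\PE[\IF(X,\ast,\Phi)]=0$: for $\MAD$ this is $\PE[\1_{\{X\le a\}}] - 3/4 - \PE[\1_{\{X\le -a\}}] + 1/4 = \Phi(a) - \Phi(-a) - 1/2 = 0$ since $a = 1/m(\Phi) = \Phi^{-1}(3/4)$; for $\CR$ it is $1/4 - \Phi(1/c(\Phi)) + \Phi(-1/c(\Phi)) = 1/4 - (2\Phi(1/c(\Phi)) - 1) \cdot \tfrac12 \cdot 2$, and one uses the defining property of $c(\Phi)$ that makes this vanish (equivalently, that $c(\Phi)$ is chosen so that the Croux--Rousseeuw estimator is Fisher-consistent at the Gaussian).

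The last identity \eqref{eq3:cl} is the only one requiring a genuine (if short) computation, and is the main point of the lemma. Writing $\IF(x,\MAD,\Phi)$ and $\IF(x,\CR,\Phi)$ in the generic form $C\,(\1_{\{x\le a\}} - \1_{\{x\le -a\}})$ plus a constant (whose contribution to $\PE[X^2\,\cdot\,]$ is zero by \eqref{eq1:cl} once we subtract off $\PE[X^2]=1$ times that constant, so it does not affect nonvanishing), one is led to evaluate $\PE[X^2(\1_{\{X\le a\}} - \1_{\{X\le -a\}})] = \int_{-a}^{a} x^2 \varphi(x)\,\d x$, which equals $\int_{-a}^a x^2\varphi(x)\,\d x = \Phi(a)-\Phi(-a) - 2a\varphi(a) > 0$ for any $a>0$ (integrate by parts using $x\varphi(x) = -\varphi'(x)$). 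Since the multiplicative constants $C$ in front are nonzero (they are $-m(\Phi)/(2\varphi(1/m(\Phi)))$ and $c(\Phi)/\int \varphi(y)\varphi(y+1/c(\Phi))\,\d y$, both finite and nonzero), we conclude $\PE[X^2\IF(X,\ast,\Phi)]\neq 0$. The main obstacle — really the only subtlety — is bookkeeping the additive constants in each $\IF$ so that \eqref{eq3:cl} is not accidentally cancelled; this is handled cleanly by first using \eqref{eq1:cl} to note that replacing $X^2$ by $H_2(X)=X^2-1$ leaves $\PE[X^2\IF(X,\ast,\Phi)] = \PE[H_2(X)\IF(X,\ast,\Phi)] = 2\,c_2(\IF_\ast)$ unchanged, and then computing $c_2(\IF_\ast)$ directly, which kills all constant terms automatically by orthogonality of $H_2$ to $H_0$.
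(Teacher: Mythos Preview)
Your treatment of $\ast=\cl$ and $\ast=\MAD$ is correct and essentially matches the paper's direct computation, though your framing through Hermite rank and evenness is cleaner. (The paper also gets $\PE[X^2\IF(X,\MAD,\Phi)]$ by integration by parts; the value is nonzero either way.)

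There is, however, a genuine gap in your handling of $\ast=\CR$. You claim that $\IF(\cdot,\CR,\Phi)$ is ``built from the symmetric combination $\1_{\{x\le a\}}-\1_{\{x\le -a\}}$'', but this is wrong: by \eqref{IF:Q} it is built from $\Phi(x+a)-\Phi(x-a)$ with $a=1/c(\Phi)$, a smooth function, not an indicator. This misreading propagates. Your evenness argument for \eqref{eq2:cl} happens to survive, since $\Phi(-x+a)-\Phi(-x-a)=\Phi(x+a)-\Phi(x-a)$ by $\Phi(-t)=1-\Phi(t)$; but your formula for \eqref{eq1:cl} is incorrect (the expectation $\PE[\Phi(X\pm a)]$ equals $\Phi(\pm a/\sqrt{2})$, not $\Phi(\pm a)$, and it is the relation $\Phi(a/\sqrt{2})=5/8$ that defines $c(\Phi)$), and your computation for \eqref{eq3:cl} --- reducing to $\int_{-a}^a x^2\varphi(x)\,\d x$ --- simply does not apply to the $\CR$ influence function. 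Establishing $\PE[X^2\IF(X,\CR,\Phi)]\neq0$ requires evaluating integrals of the form $\PE[X^2\Phi(X\pm a)]$, which is a different calculation.

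The paper itself does not carry out this $\CR$ computation either: it delegates all three identities for $\ast=\CR$ to \cite[Lemma~12]{levy-leduc:boistard:2009} and only proves the $\MAD$ case explicitly. So your proposal would need either that citation or the corrected computation for $\CR$ to be complete.
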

\begin{proof} [Proof of Lemma~\ref{lemma:rank}]
We only have to prove the result for $\ast=\MAD$ since the result
for $\ast=\CR$ follows from \cite[Lemma~12]{levy-leduc:boistard:2009}.
\eqref{eq1:cl} comes from $\PE(\1_{\{X\leq
  1/m(\Phi)\}})=\PE(\1_{\{X\leq\Phi^{-1}(3/4)\}})=3/4$ and
$\PE(\1_{\{X\leq  -1/m(\Phi)\}})=1/4$, where $X$ is a
standard Gaussian random variable. \eqref{eq2:cl} follows from
$\int_\Rset x\1_{\{x\leq \Phi^{-1}(3/4)\}}\varphi(x)\d x-\int_\Rset
x\1_{\{x\leq -\Phi^{-1}(3/4)\}}\varphi(x)\d x
=-\varphi(\Phi^{-1}(3/4))+\varphi(-\Phi^{-1}(3/4))=0$, where $\varphi$ is the
p.d.f. of a standard Gaussian random variable and the fact that $\PE(X)=0$.
Let us now compute $\PE[X^2\IF(X,\MAD,\Phi)].$
Integrating by parts, we get
$\int_\Rset x^2\1_{\{x\leq \Phi^{-1}(3/4)\}}\varphi(x)\d x-3/4
-\int_\Rset x^2\1_{\{x\leq -\Phi^{-1}(3/4)\}}\varphi(x)\d x+1/4=-2\varphi\left(\Phi^{-1}(3/4)\right).$
Thus, $\PE[X^2\IF(X,\MAD,\Phi)]=2\neq0$,
which concludes the proof.
\end{proof}

\begin{lemma}\label{lemma:if}
Let $X$ be a standard Gaussian random variable. The influence
functions $\IF$ defined in Lemma \ref{lemma:asymp:exp}
have the following properties
\begin{align}
\label{eq2}
\PE[\IF^2(X,\MAD,\Phi)]&=\frac{m^2(\Phi)}{16\varphi\left(\Phi^{-1}(3/4)^2\right)}=1.3601\;,\\\label{eq3}
\PE[\IF^2(X,\CR,\Phi)]&\approx 0.6077\;.
\end{align}
\end{lemma}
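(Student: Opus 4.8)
The plan is to treat the two cases separately: the $\MAD$ case gives an exact closed form, while the $\CR$ case reduces to a Gaussian integral that is evaluated numerically (or quoted from the literature).

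For $\ast=\MAD$, the first step is to rewrite \eqref{IF:mad} using $1/m(\Phi)=\Phi^{-1}(3/4)$. Setting $q=\Phi^{-1}(3/4)$ and noting that $\1_{\{x\leq q\}}-\1_{\{x\leq -q\}}=\1_{\{-q<x\leq q\}}$, the bracketed numerator in \eqref{IF:mad} collapses to $\1_{\{-q<x\leq q\}}-1/2$, so that
\[
\IF(x,\MAD,\Phi)=-\frac{m(\Phi)}{2\varphi(q)}\Big(\1_{\{-q<x\leq q\}}-\tfrac12\Big)\;.
\]
Since $\1_A-1/2$ takes only the values $\pm1/2$, its square is the constant $1/4$; hence $\IF^2(X,\MAD,\Phi)=m^2(\Phi)/\big(16\,\varphi(q)^2\big)$ is deterministic, and \eqref{eq2} follows at once upon inserting $\Phi^{-1}(3/4)\approx0.6745$, $\varphi(\Phi^{-1}(3/4))\approx0.3178$ and $m(\Phi)=1.4826$.

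For $\ast=\CR$, write $a=1/c(\Phi)$ and $g(x)=1/4-\Phi(x+a)+\Phi(x-a)$, so that by \eqref{IF:Q} what is needed is $\PE[g(X)^2]$ together with the normalizing constant $D=\int_\Rset\varphi(y)\varphi(y+a)\,\rmd y$. The first step is the elementary identity $D=\varphi(a/\sqrt2)/\sqrt2$, obtained by completing the square in the exponent. For the numerator, expanding $g^2$ produces a linear combination of the terms $\PE[\Phi(X\pm a)]$, $\PE[\Phi(X\pm a)^2]$ and $\PE[\Phi(X+a)\Phi(X-a)]$, each of which is a (bivariate) normal probability: introducing standard Gaussians $Z_1,Z_2$ independent of $X$, one has $\PE[\Phi(X+s)\Phi(X+t)]=\prob(Z_1-X\leq s,\ Z_2-X\leq t)$, where $(Z_1-X,Z_2-X)$ is centered Gaussian with variances $2$ and covariance $1$, i.e.\ correlation $1/2$; hence this term equals $\Phi_2(s/\sqrt2,t/\sqrt2;1/2)$, while the single terms are $\Phi(\pm a/\sqrt2)$. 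Collecting everything gives $\PE[\IF^2(X,\CR,\Phi)]=c(\Phi)^2\PE[g(X)^2]/D^2$, and a numerical evaluation of the relevant bivariate normal CDF values at $a=1/2.21914$ produces the stated $\approx0.6077$; equivalently, and more simply, $\PE[\IF^2(X,\CR,\Phi)]$ is exactly the asymptotic variance of the scale estimator $Q_n$ at the Gaussian model, whose value is recorded in \cite{rousseeuw:croux:1993} and re-derived in \cite{levy-leduc:boistard:2009}.

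The only genuine obstacle is the $\CR$ computation, which is routine but not closed-form: it requires either a one-dimensional numerical quadrature of $g^2\varphi$ or the evaluation of bivariate normal probabilities with correlation $1/2$, which is why \eqref{eq3} is stated only up to the indicated numerical precision. The $\MAD$ identity \eqref{eq2}, by contrast, is exact and rests solely on the degeneracy $(\1_A-1/2)^2\equiv1/4$.
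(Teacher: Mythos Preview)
Your proof is correct and aligns with the paper's own argument. For the $\MAD$ case the paper rewrites the influence function exactly as you do and then computes $\PE[\IF^2]=\frac{m^2(\Phi)}{4\varphi(\Phi^{-1}(3/4))^2}\,\PVar(\1_{\{|X|\leq\Phi^{-1}(3/4)\}})$ with $\PVar(\mathrm{Bernoulli}(1/2))=1/4$; your observation that $(\1_A-1/2)^2\equiv1/4$ pointwise is a slightly sharper phrasing of the same fact. For the $\CR$ case the paper simply invokes \cite{rousseeuw:croux:1993}, which is exactly the shortcut you mention at the end; your explicit reduction to bivariate normal probabilities with correlation $1/2$ is additional detail not present in the paper but perfectly valid.
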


\begin{proof} [Proof of Lemma~\ref{lemma:if}]
Eq \eqref{eq3} comes from \cite{rousseeuw:croux:1993}.
Since ,
$$\PE[\IF^2(X,\MAD,\Phi)]=\frac{m^2(\Phi)}{4\varphi\left(\Phi^{-1}(3/4)^2\right)}
\PVar\left(\1_{\{|X|\leq\Phi^{-1}(3/4)\}}\right)\;,$$
where $\1_{\{|X|\leq\Phi^{-1}(3/4)\}}$ is a Bernoulli random variable
with parameter 1/2, \eqref{eq2} follows.
\end{proof}

\begin{lemma}\label{lem:conv:integrale}
$$
\int_{\mathbb{R}^2}\frac{1}{1+|\mu-\mu'|}\frac{1}{1+|\mu|}\frac{1}{1+|\mu'|}\d\mu\d\mu'<\infty\;.
$$
\end{lemma}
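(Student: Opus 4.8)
The plan is to split the domain of integration according to which of the three denominators is dominant, reducing the double integral to elementary one-dimensional estimates. First I would observe that the integrand is nonnegative and symmetric under $(\mu,\mu')\mapsto(-\mu,-\mu')$ and under $\mu\leftrightarrow\mu'$, so it suffices to bound the integral over the region $\{0\le\mu'\le\mu\}$ (up to a finite multiplicative constant). On this region one has $1+|\mu-\mu'|\ge 1$ trivially, so a first crude bound is
\begin{align*}
\int_{0\le\mu'\le\mu}\frac{\d\mu\,\d\mu'}{(1+|\mu-\mu'|)(1+|\mu|)(1+|\mu'|)}
\le \int_0^\infty\frac{1}{1+\mu}\Big(\int_0^\mu\frac{\d\mu'}{(1+\mu-\mu')(1+\mu')}\Big)\d\mu\;,
\end{align*}
but the inner integral is only $O((\log\mu)/\mu)$, which is not quite summable against $(1+\mu)^{-1}$; so this naive route needs one extra idea.

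The extra idea is a standard convolution/change-of-variables trick: perform the substitution $s=\mu-\mu'$, $t=\mu'$ (Jacobian $1$), so the integral becomes, up to constants,
\begin{align*}
\int_{\mathbb R^2}\frac{1}{1+|s|}\,\frac{1}{1+|s+t|}\,\frac{1}{1+|t|}\,\d s\,\d t
= \int_{\mathbb R}\frac{1}{1+|s|}\Big(\int_{\mathbb R}\frac{\d t}{(1+|s+t|)(1+|t|)}\Big)\d s\;.
\end{align*}
The inner integral is the convolution of $(1+|\cdot|)^{-1}$ with itself evaluated at $-s$, and a direct computation (splitting $\mathbb R$ at $t=0$, $t=-s$, and $t=\max(0,-s)$, say) shows it equals $O\big(\log(2+|s|)/(1+|s|)\big)$ as $|s|\to\infty$. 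Then the outer integral is dominated by $\int_{\mathbb R}\log(2+|s|)(1+|s|)^{-2}\,\d s<\infty$, which settles the claim. Alternatively, and perhaps more cleanly, one can invoke the elementary fact that $g(x)\eqdef(1+|x|)^{-1}$ satisfies $(g\star g)(x)\le C\,(1+|x|)^{-1}\log(2+|x|)$ and then $\int g\cdot(g\star g)<\infty$; this is the same computation packaged as a convolution estimate.

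The only mildly delicate point—and the one I would be most careful about—is the logarithmic factor produced by the self-convolution of $(1+|x|)^{-1}$: one must check that it is genuinely only logarithmic (not a power) and that it is then absorbed by the remaining $(1+|x|)^{-1}$ to give an integrable tail. Everything else is routine: nonnegativity lets us freely split the domain, and on each of the finitely many pieces the integrand is bounded by a product of integrable one-variable functions. No compactness or regularity beyond local boundedness is needed, since the integrand is continuous and bounded on every compact set. I would present the convolution version as the main line of argument and relegate the pointwise bound on $g\star g$ to a one-line computation.
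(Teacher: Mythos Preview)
Your convolution argument is correct and is essentially the paper's proof: the paper splits $\mathbb{R}^2$ into the four sign quadrants and, on each, evaluates the inner integral by partial fractions, obtaining exactly your estimate $(g\star g)(s)=O\big(\log(2+|s|)/(1+|s|)\big)$ before concluding with the one-dimensional integral $\int_{\mathbb R}\log(2+|s|)\,(1+|s|)^{-2}\,ds<\infty$. Your packaging as a self-convolution of $g(x)=(1+|x|)^{-1}$ is the same computation done in one sweep rather than quadrant by quadrant.

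Two points about your preliminary paragraph, which you should clean up before presenting this. First, the reduction to $\{0\le\mu'\le\mu\}$ is incomplete: the two symmetries you invoke (swap and joint negation) generate only four images, which cover the first and third quadrants but miss the mixed-sign regions $\{\mu\mu'<0\}$. Second, the display you label a ``first crude bound'' keeps all three factors and is in fact an equality on that region; the inner integral \emph{is} $\frac{2\log(1+\mu)}{2+\mu}=O((\log\mu)/\mu)$, and this \emph{is} summable against $(1+\mu)^{-1}$ since the product is $O((\log\mu)/\mu^2)$. So the ``naive route'' already works on that piece, and no extra idea is needed. These slips are harmless because your global convolution argument does not rely on them, but the exposition would be cleaner if you simply dropped the first paragraph.
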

\begin{proof} [Proof of Lemma~\ref{lem:conv:integrale}]
Let us set
$I=\int_{-\infty}^{\infty}\int_{-\infty}^{\infty}p(\mu,\mu')\d\mu\d\mu',$
with
$$p(\mu,\mu')=\frac{1}{1+|\mu-\mu'|}\frac{1}{1+|\mu|}\frac{1}{1+|\mu'|}\;.$$
Note that $I=I_1+I_2+I_3+I_4,$ where
$I_1=\int_{0}^{\infty}\int_{0}^{\infty}p(\mu,\mu')\d\mu\d\mu',$
$I_2=\int_{0}^{\infty}\int_{-\infty}^{0}p(\mu,\mu')\d\mu\d\mu',$
$I_3=\int_{-\infty}^{0}\int_{0}^{\infty}p(\mu,\mu')\d\mu\d\mu'$ and
$I_4= \int_{-\infty}^{0}\int_{-\infty}^{0}p(\mu,\mu')\d\mu\d\mu'.$ It
is easy to see that $I_1=I_4$ and $I_2=I_3$. Let us now compute $I_1$. Using partial fraction decomposition,
\begin{multline*}
I_1=\int_0^\infty\frac{1}{1+\mu'}\Big(\int_{\mu'}^{\infty}\frac{1}{1+\mu-\mu'}\frac{1}{1+\mu}\d\mu\Big)\d\mu'
+\int_0^\infty\frac{1}{1+\mu'}\Big(\int_{0}^{\mu'}\frac{1}{1-\mu+\mu'}\frac{1}{1+\mu}\d\mu\Big)\d\mu'\,\\
=\int_0^\infty \frac{\log(1+\mu')}{\mu'(1+\mu')}\d\mu'+2\int_0^\infty \frac{\log(1+\mu')}{(2+\mu')(1+\mu')}\d\mu'<\infty\;,
\end{multline*}
since in the neighborhood of 0, $\log(1+\mu')/\{\mu'(1+\mu')\}\sim
1/(1+\mu')$,
$\log(1+\mu')/\{(2+\mu')(1+\mu')\}\sim \{-1/(1+\mu')+2/(2+\mu')\}$
and in the neighborhood of $\infty$, $\log(1+\mu')/\{\mu'(1+\mu')\}$
and $\log(1+\mu')/\{(2+\mu')(1+\mu')\}\sim\log(\mu')/\mu'^{2}.$
Let us now compute $I_2.$ Using the same arguments as previously, we get
$$
I_2=\int_0^\infty\frac{1}{1+\mu'}\Big(\int_{0}^{\infty}\frac{1}{1+\mu+\mu'}\frac{1}{1+\mu}\d\mu\Big)\d\mu'=\int_0^\infty \frac{\log(1+\mu')}{\mu'(1+\mu')}\d\mu'<\infty\;.
$$
\end{proof}

\begin{lemma}\label{lem:conv:integrale:even}
$$
\int_{\mathbb{R}}\bigg(\int_{\mathbb{R}}\frac{1}{1+|\mu+\mu'|}\frac{1}{1+|\mu|}\d\mu\bigg)^2\d\mu'<\infty\;.
$$
\end{lemma}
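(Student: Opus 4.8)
The plan is to evaluate the inner integral in closed form. Set
\[
g(\mu') \eqdef \int_{\mathbb{R}}\frac{\d\mu}{(1+|\mu+\mu'|)(1+|\mu|)}\;,
\]
so that the quantity to be bounded is $\int_{\mathbb{R}} g(\mu')^2\,\d\mu'$. After the change of variable $\mu\mapsto-\mu$ one has $g(-\mu')=g(\mu')$, so it is enough to treat $\mu'=y>0$. On each of the three intervals $\{\mu>0\}$, $\{-y<\mu<0\}$ and $\{\mu<-y\}$ --- determined by the sign changes of $\mu$ and of $\mu+y$ --- the integrand is a product of reciprocals of affine functions of $\mu$, hence can be integrated by partial fractions.

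Carrying this out, on $\{\mu>0\}$ one uses $\frac{1}{(1+\mu+y)(1+\mu)}=\frac1y\big(\frac1{1+\mu}-\frac1{1+\mu+y}\big)$ to get the value $y^{-1}\log(1+y)$; the region $\{\mu<-y\}$ gives the same value after the substitution $\mu\mapsto-\mu-y$; and on $\{-y<\mu<0\}$, where $|\mu+y|=\mu+y$ and $|\mu|=-\mu$, one uses $\frac{1}{(1+\mu+y)(1-\mu)}=\frac1{2+y}\big(\frac1{1+\mu+y}+\frac1{1-\mu}\big)$ to get $2(2+y)^{-1}\log(1+y)$. Adding the three contributions yields, for $\mu'\neq0$,
\[
g(\mu')=\frac{2\log(1+|\mu'|)}{|\mu'|}+\frac{2\log(1+|\mu'|)}{2+|\mu'|}\;.
\]

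It then remains to check that this function lies in $L^2(\mathbb{R})$. As $\mu'\to0$ the first term tends to $2$ and the second to $0$, so $g$ extends continuously with $g(0)=2$ and is bounded on $[-1,1]$; as $|\mu'|\to\infty$ one has $g(\mu')=O\big(\log|\mu'|/|\mu'|\big)$, and $\int^{\infty}(\log t)^2\,t^{-2}\,\d t<\infty$. Combining the two regimes gives $\int_{\mathbb{R}}g(\mu')^2\,\d\mu'<\infty$, which is the assertion. There is no real analytic difficulty here; the only step needing care is the bookkeeping of $|\mu+\mu'|$ over the three sub-intervals. If one wishes to bypass the explicit formula, the same decomposition directly gives $g(\mu')\le 4\log(1+|\mu'|)/|\mu'|$ for $|\mu'|\ge1$, while the Cauchy--Schwarz inequality gives $g(\mu')\le\big\|(1+|\cdot|)^{-1}\big\|_2^2=2$ for $|\mu'|\le1$; these two bounds already suffice.
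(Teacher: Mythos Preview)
Your proof is correct and follows essentially the same approach as the paper: remove the absolute values by splitting into sign regions, compute each piece by partial fractions, and then check that the resulting expression $\log(1+|\mu'|)/|\mu'|$ (and its companion with denominator $2+|\mu'|$) is square-integrable. The only organizational difference is that the paper splits both $\mu$ and $\mu'$ by sign into four pieces and bounds via $(a+b)^2\le 2(a^2+b^2)$, whereas you use the symmetry $g(-\mu')=g(\mu')$ and compute the inner integral in closed form before squaring; your route is slightly cleaner but not substantively different.
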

\begin{proof} [Proof of Lemma~\ref{lem:conv:integrale:even}]
Let us set
$I=\int_{-\infty}^{\infty}\big(\int_{-\infty}^{\infty}p(\mu,\mu')\d\mu\big)^2\d\mu',$
with
$$p(\mu,\mu')=\frac{1}{1+|\mu+\mu'|}\frac{1}{1+|\mu|}\;.$$
Note that $I\leq 2(I_1+I_2+I_3+I_4),$ where
$I_1=\int_{0}^{\infty}\big(\int_{0}^{\infty}p(\mu,\mu')\d\mu\big)^2\d\mu',$
$I_2=\int_{0}^{\infty}\big(\int_{-\infty}^{0}p(\mu,\mu')\d\mu\big)^2\d\mu',$
$I_3=\int_{-\infty}^{0}\big(\int_{0}^{\infty}p(\mu,\mu')\d\mu\big)^2\d\mu'$ and
$I_4= \int_{-\infty}^{0}\big(\int_{-\infty}^{0}p(\mu,\mu')\d\mu\big)^2\d\mu'.$ It
is easy to see that $I_1=I_4$ and $I_2=I_3$. Let us now compute $I_1$. Using partial fraction decomposition,
$$
I_1=\int_0^\infty\Big(\int_{0}^{\infty}\frac{1}{1+\mu+\mu'}\frac{1}{1+\mu}\d\mu\Big)^2\d\mu'
=\int_0^\infty\Big(\frac{1}{\mu'}\log(1+\mu')\Big)^2\d\mu'<\infty
$$
since in the neighborhood of 0, $[\log(1+\mu')]^2/{\mu'}^2\sim
1$,
and in the neighborhood of $\infty$, $[\log(1+\mu')]^2/{\mu'}^2 \sim[\log(\mu')]^2/\mu'^{2}.$
Let us now compute $I_2.$ Using the same arguments as previously, we
get that there exists a positive constant $C$ such that
\begin{multline*}
I_2\leq 2\int_0^\infty\Big(\int_{\mu'}^{\infty}\frac{1}{1+\mu-\mu'}\frac{1}{1+\mu}\d\mu\Big)^2\d\mu'
+2\int_0^\infty\Big(\int_{0}^{\mu'}\frac{1}{1-\mu+\mu'}\frac{1}{1+\mu}\d\mu\Big)^2\d\mu'\,\\
\leq C\int_0^\infty \bigg[\frac{\log(1+\mu')}{\mu'}\bigg]^2\d\mu'+C\int_0^\infty \bigg[\frac{\log(1+\mu')}{(2+\mu')}\bigg]^2\d\mu'<\infty\;.
\end{multline*}
\end{proof}

\begin{lemma}\label{lem:D_inf:bounded}
Let
$
\be_{u}(\xi) = 2^{-{u}/2}\, [1, \rme^{-\rmi2^{-u}\xi}, \dots,
\rme^{-\rmi(2^{u}-1)2^{-u}\xi}]^T,
$
where $\xi\in\Rset.$ For all $u\geq 0$, each component of the vector
\begin{equation*}
\bdensasymp[\psi]{u}{\lambda}{d} =
\sum_{l\in\Zset} |\lambda+2l\pi|^{-2d}\,\be_{u}(\lambda+2l\pi) \,
\overline{\hat{\psi}(\lambda+2l\pi)}\hat{\psi}(2^{-u}(\lambda+2l\pi))\;,
\end{equation*}
is bounded on $(-\pi,\pi)$, where $\hat{\psi}$ is defined in \eqref{eq:fourier_psi}.
\end{lemma}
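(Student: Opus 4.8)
The plan is to estimate the series defining $\bdensasymp[\psi]{u}{\lambda}{d}$ term by term, after first recording a single pointwise bound on $\hat\psi$ that packages the two relevant properties of the wavelet. From \ref{item:psiHat} one has $|\hat\psi(\xi)|\le C(1+|\xi|)^{-\alpha}$, while the vanishing moments give $|\hat\psi(\xi)|=O(|\xi|^{M})$ as $\xi\to0$ by \eqref{eq:MVM:rob}; since $\hat\psi$ is continuous and bounded (being the Fourier transform of a compactly supported integrable function, by \ref{item:Wreg}), these combine, via a routine interpolation between the behaviour at the origin and the decay at infinity, into
$$ |\hat\psi(\xi)|\le C\,\frac{|\xi|^{M}}{(1+|\xi|)^{M+\alpha}}\eqsp,\qquad \xi\in\Rset\eqsp, $$
for some finite constant $C$.

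Fixing $u\ge0$ and a component index $r\in\{0,\dots,2^{u}-1\}$, and using that every entry of $\be_{u}$ has modulus $2^{-u/2}$, the modulus of the $r$-th component of $\bdensasymp[\psi]{u}{\lambda}{d}$ is at most $\sum_{l\in\Zset}b_{l}(\lambda)$ with
$$ b_{l}(\lambda)\eqdef 2^{-u/2}\,|\lambda+2l\pi|^{-2d}\,\bigl|\hat\psi(\lambda+2l\pi)\bigr|\,\bigl|\hat\psi\bigl(2^{-u}(\lambda+2l\pi)\bigr)\bigr|\eqsp. $$
Substituting the pointwise bound and writing $t\eqdef|\lambda+2l\pi|$, I would get $b_{l}(\lambda)\le C_{u}\,t^{2M-2d}(1+t)^{-M-\alpha}(1+2^{-u}t)^{-M-\alpha}$, where $C_{u}$ depends only on $u,M,\alpha$.

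For $l=0$ I would use that $t=|\lambda|\le\pi$ together with $2M-2d\ge0$, which is the right-hand inequality $d\le M$ in \eqref{eq:ConditionD}; discarding the two factors that are $\le1$ leaves $b_{0}(\lambda)\le C_{u}\pi^{2M-2d}$, a constant independent of $\lambda$. For $l\neq0$ I would use $t\ge\pi(2|l|-1)\ge\pi|l|\ge\pi$ and bound $(1+t)^{-M-\alpha}\le t^{-M-\alpha}$ and $(1+2^{-u}t)^{-M-\alpha}\le(2^{-u}t)^{-M-\alpha}$; the powers $t^{M}$ then cancel and
$$ b_{l}(\lambda)\le C'_{u}\,t^{-2d-2\alpha}\le C'_{u}\,(\pi|l|)^{-2d-2\alpha}\eqsp. $$
The left-hand inequality $(1+\beta)/2-\alpha<d$ in \eqref{eq:ConditionD}, combined with $\beta>0$, yields $d+\alpha>(1+\beta)/2>1/2$, hence $2d+2\alpha>1$ and $\sum_{l\neq0}|l|^{-2d-2\alpha}<\infty$. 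Therefore $\sum_{l\in\Zset}b_{l}(\lambda)$ is finite and bounded uniformly in $\lambda\in(-\pi,\pi)$, which is the assertion.

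There is no essential difficulty here: the only steps requiring some care are the derivation of the uniform pointwise estimate on $\hat\psi$ and keeping track of which half of condition \eqref{eq:ConditionD} is used where — the bound $d\le M$ controls the single $l=0$ term near the origin, and $d>(1+\beta)/2-\alpha$ is exactly what makes the $l\neq0$ tail summable. I would also note in passing that the same domination shows, via dominated convergence in the series, that $\lambda\mapsto\bdensasymp[\psi]{u}{\lambda}{d}$ is in fact continuous on $(-\pi,\pi)$, although only the boundedness claimed in the lemma is needed in the proofs of Proposition~\ref{lemma:asymp:exp} and Theorem~\ref{theo:clt:wavelet}.
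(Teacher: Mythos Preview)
Your proof is correct and follows essentially the same route as the paper's: split off the $l=0$ term and control it via $d\le M$ and the vanishing-moment behaviour \eqref{eq:MVM:rob}, then bound the tail $l\neq0$ using the decay \ref{item:psiHat} and the summability criterion $2d+2\alpha>1$ coming from the left inequality in \eqref{eq:ConditionD}. The only cosmetic difference is that you first package the two properties of $\hat\psi$ into a single interpolation bound, whereas the paper invokes them separately on the two ranges of $l$.
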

\begin{proof}[Proof of Lemma \ref{lem:D_inf:bounded}] We start with
  the case where $l=0.$ Using (\ref{eq:MVM:rob}), we obtain that\\
$2^{-u/2}|\lambda|^{-2d}|\hat{\psi}(\lambda)||\hat{\psi}(2^{-u}\lambda)|=O(|\lambda|^{2M-2d})$, as
$\lambda\to0$ hence,
\eqref{eq:ConditionD} ensures that \\
$2^{-u/2}|\lambda|^{-2d}|\hat{\psi}(\lambda)||\hat{\psi}(2^{-u}\lambda)|=O(1).$
Let $\be_{u}^{(k)}$ denotes the $k$-th component of the vector $\be_{u}$.
For $l\neq0$, \ref{item:psiHat} ensures that for all $\lambda$ in
$(-\pi,\pi)$ there exists a positive constant $C$ such that
$|\hat{\psi}(\lambda)|\leq {C}/{(1+|\lambda|)^\alpha}$. Then, there
exists a positive constant $C'$ such that
$$
\sum\limits_{l\in\Zset^\ast}|\lambda+2\pi
l|^{-2d}\overline{\hat{\psi}(\lambda+2\pi l)}\hat{\psi}\big(2^{-u}(\lambda+2\pi l)\big)\be_{u}^{(k)}(\lambda)
\leq C' \sum_{l\in\Zset^*}|\lambda+2\pi l|^{-2d-2\alpha}\; .
$$
If $\lambda=0$, $\sum\limits_{l\in\Zset^*}{1}/{|2\pi
  l|^{2d+2\alpha}}<\infty$ by \eqref{eq:ConditionD}.
If $\lambda\neq0$, then, since $-\pi\leq \lambda\leq \pi$,
$\sum\limits_{l\in\Zset^*}{1}/{|\lambda+2\pi
  l|^{2d+2\alpha}}\leq\sum\limits_{l\in\Zset^*}{1}/{|\pi(
  2l-1)|^{2d+2\alpha}}<\infty $ by \eqref{eq:ConditionD}.
\end{proof}
\begin{lemma}\label{lemma:double:fatou}
Let $f_n$ and $g_n$ be two sequences of measurable functions on a
measure space $(\Omega,\mathcal{F},\mu)$
such that for all $n$ $|f_n|\leq g_n$.
Assume that $\underset{n\to\infty}{\lim\inf} g_n$ exists and is equal
to $g$. Assume also that $\int g\d\mu= \underset{n\to\infty}{\lim\inf}\int g_n\d\mu$
and $\lim\limits_{n\to\infty}f_n=f.$
Then $\int\underset{n\to\infty}{\lim\inf} f_n\d\mu= \underset{n\to\infty}{\lim\inf}\int f_n\d\mu$.
\end{lemma}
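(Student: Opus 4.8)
The statement is a version of the generalized (``variable envelope'') dominated convergence theorem, and the plan is to derive it from Fatou's lemma applied separately to the two nonnegative sequences $g_n+f_n$ and $g_n-f_n$. First note that since $|f_n|\le g_n$ and $f_n\to f$, one has $|f|=\liminf_n|f_n|\le\liminf_n g_n=g$, so $\int|f|\,\d\mu\le\int g\,\d\mu$; in particular $f$ is integrable as soon as $g$ is, and we may (and do, as is the case in all our applications) assume the common value $\int g\,\d\mu=\liminf_n\int g_n\,\d\mu$ is finite. Moreover $g_n\ge|f_n|\ge0$, so both $g_n+f_n\ge0$ and $g_n-f_n\ge0$ and Fatou's lemma is applicable to each.

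Apply Fatou's lemma to $g_n+f_n$. Because $f_n\to f$, we have $\liminf_n(g_n+f_n)=\liminf_n g_n+f=g+f$ pointwise, so the left-hand side of Fatou's inequality equals $\int g\,\d\mu+\int f\,\d\mu$. For the right-hand side, use the elementary inequality $\liminf_n(a_n+b_n)\le\liminf_n a_n+\limsup_n b_n$ with $a_n=\int g_n\,\d\mu$ and $b_n=\int f_n\,\d\mu$, together with the hypothesis $\liminf_n\int g_n\,\d\mu=\int g\,\d\mu$, to obtain $\liminf_n\int(g_n+f_n)\,\d\mu\le\int g\,\d\mu+\limsup_n\int f_n\,\d\mu$. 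Cancelling the finite quantity $\int g\,\d\mu$ from both sides yields $\int f\,\d\mu\le\limsup_n\int f_n\,\d\mu$.

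Running the identical argument with $g_n-f_n$ (now $-f_n\to-f$, so $\liminf_n(g_n-f_n)=g-f$ pointwise, and on the right-hand side $\liminf_n\int(g_n-f_n)\,\d\mu\le\int g\,\d\mu-\liminf_n\int f_n\,\d\mu$) gives, after again cancelling $\int g\,\d\mu$, the reverse bound $\liminf_n\int f_n\,\d\mu\le\int f\,\d\mu$. Combining the two inequalities with the trivial $\liminf_n\int f_n\,\d\mu\le\limsup_n\int f_n\,\d\mu$ shows that $\lim_n\int f_n\,\d\mu$ exists and equals $\int f\,\d\mu$, which, since $f=\liminf_n f_n$, is exactly the claimed identity $\int\liminf_n f_n\,\d\mu=\liminf_n\int f_n\,\d\mu$.

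The argument is essentially routine; the only points that need care are the finiteness of $\int g\,\d\mu$, which is what legitimizes cancelling it on both sides (the conclusion genuinely can fail if it is infinite), and the correct bookkeeping with sub/superadditivity of $\liminf$ and $\limsup$ --- in particular the fact that $\liminf_n(g_n+f_n)=g+f$ pointwise holds precisely because $f_n$ \emph{converges}, while for the envelopes $g_n$ alone only superadditivity of $\liminf$ would be available.
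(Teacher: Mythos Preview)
Your chain of inequalities points the wrong way. Fatou applied to $g_n+f_n$ gives $\int(g+f)\,\d\mu\le\liminf_n\int(g_n+f_n)\,\d\mu$, a \emph{lower} bound on the right-hand side; you then invoke $\liminf_n(a_n+b_n)\le\liminf_n a_n+\limsup_n b_n$, which is an \emph{upper} bound on that same quantity. Chaining a lower bound through an upper bound yields only the trivial $\int f\,\d\mu\le\limsup_n\int f_n\,\d\mu$, and the companion argument with $g_n-f_n$ likewise yields only $\liminf_n\int f_n\,\d\mu\le\int f\,\d\mu$. Together these say $\liminf_n\int f_n\le\int f\le\limsup_n\int f_n$, which is no sandwich at all; your conclusion that the full limit $\lim_n\int f_n$ exists does not follow --- and is in fact false under the stated hypotheses. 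On $\Rset$ with Lebesgue measure take $g_n=\1_{[0,1]}$ for even $n$, $g_n=\1_{[0,1]}+\1_{[n,n+1]}$ for odd $n$, and $f_n=0$ for even $n$, $f_n=\1_{[n,n+1]}$ for odd $n$: then $g=\1_{[0,1]}$, $\liminf_n\int g_n=1=\int g$, $f_n\to0$, yet $\int f_n$ oscillates between $0$ and $1$.

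What the two-sided Fatou argument actually needs is the \emph{equality} $\liminf_n\int(g_n\pm f_n)\,\d\mu=\int g\,\d\mu+\liminf_n(\pm\int f_n\,\d\mu)$, and that requires the full limit $\int g_n\to\int g$, not merely $\liminf_n\int g_n=\int g$. The paper takes a slightly different route: it first reduces to $f_n\ge0$ and obtains the lower bound $\int f\le\liminf_n\int f_n$ from Fatou applied to $f_n$ itself, reserving $g_n-f_n$ for the reverse inequality; but that reverse step, as written there, also tacitly uses $\int g_n\to\int g$ (which does hold in every application of the lemma in the paper, so nothing is lost in context). If you wish to keep only the $\liminf$ assumption, pass to a subsequence $(n_k)$ with $\int g_{n_k}\to\int g$: along it Fatou forces $\liminf_k g_{n_k}=g$ a.e., and since $\int g_{n_k}$ now genuinely converges your two-sided argument goes through to give $\int f_{n_k}\to\int f$, whence $\liminf_n\int f_n\le\int f$; combined with Fatou for $f_n\ge0$ this gives the claim.
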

\begin{proof}[Proof of Lemma~\ref{lemma:double:fatou}]
Since $f_n=f_n^+-f_n^-$, where $f_n^+,f_n^-\geq 0$, we assume in the
sequel that $f_n$ is non negative. By Fatou's Lemma
$\int \underset{n\to\infty}{\lim\inf}(g_n-f_n)\d\mu \leq
\underset{n\to\infty}{\lim\inf}\int (g_n-f_n)\d\mu $. Using that
$\underset{n\to\infty}{\lim\inf}g_n=g$ and that
$\int g\d\mu= \underset{n\to\infty}{\lim\inf}\int g_n\d\mu$, we
obtain $\underset{n\to\infty}{\lim\sup}\int f_n\d\mu\leq
\int\underset{n\to\infty}{\lim\sup} f_n\d\mu.$
By applying Fatou's Lemma to $f_n$, we obtain
$\int\underset{n\to\infty}{\lim\inf}
f_n\d\mu\leq\underset{n\to\infty}{\lim\inf} \int f_n\d\mu$.
Thus,
$$\int f\d\mu=\int\underset{n\to\infty}{\lim\inf}
f_n\d\mu\leq\underset{n\to\infty}{\lim\inf} \int f_n\d\mu
\leq\underset{n\to\infty}{\lim\sup}\int f_n\d\mu\leq
\int\underset{n\to\infty}{\lim\sup} f_n\d\mu=\int f\d\mu\;,$$
which concludes the proof.
\end{proof}

\bibliography{lrd}
\bibliographystyle{abbrv}
\end{document}